\documentclass{amsart}
\usepackage{amsmath,amsfonts,amssymb,amsthm,amscd,comment,euscript}

\usepackage[all,cmtip]{xy}
\usepackage{graphicx}
\usepackage{tikz}
\usepackage{enumerate} %allows to change enumeration items easily
\definecolor{linkred}{rgb}{0.7,0.2,0.2}
	\definecolor{linkblue}{rgb}{0,0.2,0.6}
	\definecolor{linkgreen}{rgb}{0,0.6,0.2}
\usepackage[colorlinks,plainpages,
    linkcolor=linkblue,
    citecolor=linkgreen,
    urlcolor=linkred]{hyperref}
\usepackage{xcolor}
\usepackage[capitalise]{cleveref}
\crefname{equation}{}{}

\allowdisplaybreaks[4]

\theoremstyle{plain}
\newtheorem{thm}{Theorem}[section]
\newtheorem{lem}[thm]{Lemma}
\newtheorem{cor}[thm]{Corollary}
\newtheorem{prop}[thm]{Proposition}

\theoremstyle{definition}
\newtheorem{example}[thm]{Example}
\newtheorem{rem}[thm]{Remark}
\newtheorem{dfn}[thm]{Definition}

\newcommand{\bbZ}{\mathbb{Z}}
\newcommand{\bbC}{\mathbb{C}}

\newcommand{\bbR}{\mathbb{R}}

\newcommand{\rmp}{\mathrm{p}}

\newcommand{\calS}{\mathcal{S}}

\newcommand{\calO}{\mathcal{O}}
\newcommand{\calI}{\mathcal{I}}
\newcommand{\calT}{\mathcal{T}}

\newcommand{\calF}{\mathcal{F}}

\newcommand{\calB}{\mathcal{B}}

\newcommand{\wt}{\widetilde}

\def\vv{\mathbf{v}}% 
\def\xx{\mathbf{x}}

\newcommand{\al}{\alpha}
\newcommand{\be}{\beta}

\newcommand{\la}{\lambda}

\newcommand{\pt}{{\operatorname{pt}}}

\DeclareMathOperator{\Hom}{Hom}   % the class of point

\newcommand{\aff}{{\operatorname{aff}}}
\newcommand{\ext}{{\operatorname{ext}}}

\newcommand{\calG}{\mathcal{G}}

\DeclareMathOperator{\trace}{\trace}

\newcommand{\id}{\textrm{id}}

\newcommand{\mr}{\mathring}

\DeclareMathOperator{\Frac}{Frac}

\DeclareMathOperator{\Gr}{Gr}
\DeclareMathOperator{\Fl}{Fl}

\DeclareMathOperator{\var}{var}
\DeclareMathOperator{\RHom}{RHom}
\DeclareMathOperator{\MC}{MC}
\DeclareMathOperator{\SMC}{SMC}
\DeclareMathOperator{\rot}{rot}
\DeclareMathOperator{\ev}{ev}
\DeclareMathOperator{\ch}{ch}

\def\c#1{\makebox[0.8pc]{\makebox[0pc]{\(s_{#1}\)}%
    \makebox[0pc]{\(\diagdown\)}}}% 
\def\m#1{\makebox[0.8pc]{\(s_{#1}\)}}% 

\author[C.~Su]{Changjian Su}
\address{Yau Mathematical Sciences Center, Tsinghua University, Beijing, 100084, China}
\email{changjiansu@tsinghua.edu.cn}

\author[R.~Xiong]{Rui~Xiong}
\address{Department of Mathematics and Statistics, University of Ottawa, 150 Louis-Pasteur, Ottawa, ON, K1N 6N5, Canada}
\email{rxiong@uottawa.ca}

\author[C.~Zhong]{Changlong~Zhong}
\address{Department of Mathematics and Statistics, State University of New York at Albany, 1400 Washington Avenue, Albany, NY 12222, USA}
\email{czhong@albany.edu}

\title[Open Projected Richardson Varieties and Affine Schubert Cells]{Motivic Chern Classes of Open Projected Richardson Varieties and of Affine Schubert Cells}

\begin{document}

\date{\today}

\begin{abstract}
The open projected Richardson varieties are images of the open Richardson  varieties of the complete flag variety under the canonical projection to the partial flag variety.  Our main result  compares the Segre motivic Chern (SMC) classes of the open projected Richardson varieties with those of the  affine Schubert cells by pushing or pulling these classes to the affine Grassmannian. The main method is the recursive relation determined by the Demazure--Lusztig operators. As another application of this recursive relation, we relate the localization of the SMC classes to the twisted Kazhdan--Lusztig R-polynomials. In the case of Grassmannians, the open projected Richardson varieties are known as the open positroid varieties. We give a combinatorial formula for the SMC classes of these varieties. 
\end{abstract}

\maketitle

\section{Introduction} 

Let $G$ be a complex connected reductive group with a Borel subgroup and a parabolic subgroup $P\supset B$.  Open (resp. closed) Richardson varieties are defined to be intersections of Schubert cells (resp. Schubert varieties) with opposite Schubert cells (resp. opposite Schubert varieties) in $G/B$, and their projections to $G/P$ are called open (resp. closed) projected Richardson varieties. 
When $G/P$ is a Grassmannian, the projected Richardson varieties are also known as positroid varieties; see \cite{P06,KLS13}. 
Richardson varieties are originally introduced in \cite{KL80}, where the number of points of open Richardson varieties over finite fields is shown to be given by the Kazhdan--Lusztig R-polynomials associated to the Weyl group $W$ of $G$.  Such varieties are latter studied by Richardson \cite{R92} and Deodhar \cite{Deo85}. Many geometric properties of projected Richardson varieties are studied by Knutson, Lam and Speyer \cite{KLS14}. 
Projected Richardson varieties play a crucial role in recent advancements in totally positivity; see \cite{GKL22, BH24} and the references within. 
See \cite{S23} for an excellent survey of this subject, especially from the point of view of Schubert calculus.

Among many studies of (projected) Richardson varieties and their related geometry and combinatorial structures, the following property (along with its special cases and more general cases) plays a central role. Let $\la$ be a dominant coweight whose stabilizer $W_\la< W$ determines a parabolic subgroup $P:=P_\la< G$. Let $t_\la\in W_\ext$ be the translation element in the extended affine Weyl group. Let $W^{P}$ be the set of minimal length representatives of $W/W_P$. In \cite{HL15}, it is shown that there exists an order-preserving embedding 
% there is a {\color{red}order-preserving bijection }
\[
\nu:\mathcal{Q}=\{(u,w)\in W\times W^P\mid u\leq w \}
\to Wt_\la W\subset W_\ext, \quad (u,w)\mapsto ut_\la w^{-1},
\] 
where $\mathcal{Q}$ is the index set of projected Richardson varieties, ordered by closure inclusion.
% where the domain is equipped with certain partial order.
% 
% This combinatorial property is part of a more general framework called Bruhat atlas, see \cite{S10, KWY13, HKL13, H19,H20,  BH21} for more details. 
This combinatorial property was lifted geometrically to Bruhat atlas and Kazhdan--Lusztig atlas \cite{HKL13}; see \cite{H19,BH21} for an introduction. 
% In our setting, it essentially says that there is a stratification of $G/P$ given by open projected Richardson varieties so that $G/P$ is covered by charts $U_f$ with $f\in W^P$ and there is a stratification-preserving isomorphism between $U_f$  and the intersection of certain opposite  Schubert cells with Schubert variety inside the affine flag variety. The study of the Bruhat atlas was extended to more general cases in \cite{BH21}.
 %(see Bao's other work on more general results). 

Following \cite{HL15}, the construction can be stated in more details, and is more relevant to our work. 
Let $\Gr_G=G((z))/G[[z]]$ be the affine Grassmannian and $\Fl_G$ be the affine flag variety.  Let $\Gr_\la$ be the $G[[z]]$-orbit of the element $z^{-\la}G[[z]]/G[[z]]\in \Gr_G$, which is an affine bundle over $G/P$. We have the following diagram of maps
\[
\xymatrix{
G/P \ar[r]^-{i_\la} & \Gr_\la \ar[r]^-{j_\la} & \Gr_G \ar[r]^-r &\Fl_G.
}
\]
Here $i_\la$ is the zero section of the affine bundle, $j_\la$ is the embedding, and $r$ is defined as follows. Let $K\subset G$ be the maximal compact subgroup and $T_\bbR= K\cap T$ be the compact torus. Let $LK$ and $\Omega K$ be the loop group and the based loop group, respectively, then $r$ is the composition
\[
\Gr_G\simeq \Omega K \to LK \to LK/T_\bbR\simeq \Fl_G. 
\]

One can then consider various cohomology and K-theory classes determined by open/closed projected Richardson varieties in $G/P$. For cohomology and K-theory classes, it is proved in \cite{HL15} that the pushforward of  classes of closed projected Richardson varieties via $i_\lambda$
%restricting to the fixed points $v\in W^P$, 
coincide with the pullbacks of the opposite Schubert classes in $\Fl_G$ via the map $q:=r\circ j_\la$, see also \cite{KLS13}. The proof uses the explicit calculation of the restrictions on the two sides, and the AJS--Billey--Graham root formula played a key role. Motivated by this result, \cite{FGSX} compared the Segre--MacPherson classes (a variant form of the Chern--Schwartz--MacPherson (CSM) classes) of open projected Richardson varieties and the Segre--MacPherson classes of opposite affine Schubert cells in $\Fl_G$. 
{The proof is distinct from that in \cite{HL15}.} 
Instead of computing the restriction formulas, recursive formulas are established on both sides, which uniquely determine the classes on the finite and affine sides. One then shows that the two recursive formulas are identical by using the bijection $\nu$. 

The present paper generalizes the result in \cite{FGSX} to the case of the motivic Chern (MC) classes in K-theory. While we still use the inductive method in this paper, the argument is significantly more complicated, due to the quadratic relation in the Hecke algebra (see the table below). For instance, the recursion formulae Theorem \ref{thm:MCPif} (for the finite case) and  Theorem \ref{thm:recSMCGr} (for the affine case) each contains four cases, while the corresponding recursion formula in \cite{FGSX} only has one case.

Note that CSM classes of Schubert cells in homology and MC classes of Schubert cells in K-theory generalize the corresponding cohomology and K-theory Schubert classes and they are closely related to the Maulik--Okounkov stable basis for the Springer resolution $T^*G/B$ \cite{MO,AMSS23,AMSS24,feher2021motivic,Kon}. One simple way of uniformizing these four classes and their corresponding recursion formulas is to use the following table:
\begin{center}
\begin{tabular}{l|l|l}
&permuted by &quadratic relation\\
\hline
Schubert classes in $H_T(G/B)$ & cohomology divided difference operators $\partial_i$&  $\partial_i^2=0$\\
Schubert classes in $K_T(G/B)$ & K-theory divided difference operators $\pi_i$ & $\pi_i^2=\pi_i$\\
CSM classes in $H_T(G/B)$ & cohomology Demazure--Lusztig operators $T_i$ & $T_i^2=1$\\
MC classes in $K_T(G/B)[[y]]$ & Demazure--Lusztig operators $\mathcal{T}_i$ &$\mathcal{T}_i^2=(-y-1)\mathcal{T}_i-y$
\end{tabular}
\end{center}
Based on this observation, our method in this paper can be used to treat all four cases in a uniform way. 

We can now state the main result of this paper. Let $(u,w)\in W\times W^P$ with $u\le w$, and $f:=ut_\la w^{-1}\in W_\ext$. Let $\mr{\Pi}_f$ be the projection in $G/P$ of the open Richardson variety $Y(u)^\circ\cap X(w)^\circ\subset G/B$ where $Y(u)^\circ$ and $X(w)^\circ$ are the opposite Schubert cell and Schubert cell, respectively. Let $\mr{\Sigma}^f$ be the oppsite Schubert cell in $\Fl_G$. We can define the Segre motivic Chern (SMC) classes for $\mr{\Pi}_f$ and $\mr{\Sigma}^f$ (Definitions \ref{def:MCproR},  \ref{def:OpenRic} and   \ref{def:SMCaff}). 
\begin{thm}[\cref{thm:main}]\label{thm-B}
Let $\mathcal{N}$ be the normal bundle of $G/P$ inside $\Gr_\lambda$. Then  
    \[i_{\lambda *}\bigg(\frac{\SMC_y(\mathring{\Pi}_{f})}{\lambda_y(\mathcal{N}^*)}\bigg)=q^*\bigg(\SMC_y(\mathring{\Sigma}^{f})\bigg)\in K_T(\Gr_\lambda)[[y]].\]
\end{thm}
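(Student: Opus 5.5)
Both sides of Theorem~\ref{thm-B} will be treated as functions of $f\in\nu(\mathcal{Q})\subset Wt_\la W$. The plan is to show that they satisfy the same recursion with the same initial values, following the strategy of \cite{FGSX} but in the Hecke-algebra setting, where the operators $\mathcal{T}_i$ satisfy $\mathcal{T}_i^2=(-y-1)\mathcal{T}_i-y$.

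The proof has three main steps.

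\textbf{Step 1: base case.} For the base case we take $u=w\in W^P$, so $f=wt_\la w^{-1}$. On the finite side, $\mr{\Pi}_f$ is the $T$-fixed point $wP$. On the affine side, $\mr{\Sigma}^f$ meets $\Gr_\la$ (under $q$) transversally at that point. We will check the identity there by localization. The SMC class of the point, divided by $\lambda_y(\mathcal{N}^*)$ and pushed forward by $i_{\la*}$, should equal the restriction of the SMC class of the opposite affine Schubert cell through the fixed point $wt_\la$. Both should reduce to a product over the tangent weights of $\Gr_\la$ at $wP$: those of $G/P$ together with those of $\mathcal{N}$. Transversality, together with the normalization $\SMC_y=\MC_y/\lambda_y(T^*)$, should make the two products agree.

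\textbf{Step 2: the recursions.}
\begin{itemize}
\item On the finite side, Theorem~\ref{thm:MCPif} expresses $\mathcal{T}_i$ (or its adjoint variant) applied to $\SMC_y(\mr{\Pi}_f)$ as a combination of $\SMC_y(\mr{\Pi}_{s_if})$, $\SMC_y(\mr{\Pi}_{fs_i})$ and $\SMC_y(\mr{\Pi}_f)$. There are four cases, according to whether $s_iu$ lies below or above $u$ and whether $s_iw$ stays in $W^P$ or not.
\item On the affine side, Theorem~\ref{thm:recSMCGr} gives the analogous four-case recursion for $q^*\SMC_y(\mr{\Sigma}^f)$ in terms of the affine Demazure--Lusztig operators. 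These come from left multiplication by $s_i$, $i$ a finite simple reflection, acting on $\Gr_G$ through the $G$-action.
\end{itemize}
We then need two compatibilities.
\begin{itemize}
\item $i_{\la*}$, $q^*$ and division by $\lambda_y(\mathcal{N}^*)$ must intertwine the two operators. The $G$-equivariance of $i_\la$, $j_\la$ and $r$ gives compatibility with left Weyl group actions. Since $\mathcal{N}$ is $G$-equivariant, $\lambda_y(\mathcal{N}^*)$ is $\mathcal{T}_i$-invariant in the appropriate twisted sense.
\item $\nu$ must match the four cases term by term. We have $s_if=(s_iu)t_\la w^{-1}$ and $fs_i=ut_\la(s_iw)^{-1}$. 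Length additivity for $ut_\la w^{-1}$ converts each Bruhat condition on $(u,w)$ into $\ell(s_if)\gtrless\ell(f)$ or $\ell(fs_i)\gtrless\ell(f)$.
\end{itemize}

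\textbf{Step 3: induction.} We induct on $\ell(w)-\ell(u)$, or equivalently on the length of $f$ relative to the base. The recursion with a length-raising $s_i$ determines every class from those with smaller index. Combined with Step 1, this yields the equality for all $f$.

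\textbf{Main obstacle.} I expect the hardest part to be Step 2's case matching, especially the case where $s_iw\notin W^P$ (that is, $s_iw=wv$ with $v\in W_P$). There the finite-side recursion involves only $u$ and a scalar from the quadratic relation, while on the affine side $s_if$ and $fs_i$ may coincide or leave $\nu(\mathcal{Q})$. I will first handle it by showing that $s_ift_\la$-type coincidences force the extra terms to cancel. If that fails, the fallback is to compare localizations at fixed points directly, using the twisted R-polynomial formula.
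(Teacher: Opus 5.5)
Your plan has a genuine gap in Step~3. It is not the case $s_iw=ws_j$ that you single out.

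\textbf{Vanishing outside $\nu(\mathcal{Q})$.} You run the induction only over $f\in\nu(\mathcal{Q})$ (i.e.\ $u\le w$), starting from $u=w$. But the recursions you need unavoidably involve indices outside $\nu(\mathcal{Q})$. For those you must know that $q^*\SMC_y(\mathring{\Sigma}^{f})=0$, and nothing in your plan gives this.

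Already the first step shows it. Take $s_i\notin W_P$. The element $f_{e,s_i}=t_\lambda s_i$ is reached from $t_\lambda$ via case~(3) of Theorem~\ref{thm:MCPif} and case~(2) of Theorem~\ref{thm:recSMCGr}. Both of these also contain $s_it_\lambda=f_{s_i,e}$, which lies outside $\nu(\mathcal{Q})$ and has length $\ell(t_\lambda)+1$. On the finite side $\SMC_y(\mathring{\Pi}_{s_i,e})=0$, since $\mathring{R}_{s_i,e}=\varnothing$. But you also need $q^*\SMC_y(\mathring{\Sigma}^{s_it_\lambda})=0$. Saying that ``extra terms cancel'', or proposing a twisted $R$-polynomial comparison, is not an argument for that.

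\textbf{The induction parameter.} $\ell(w)-\ell(u)$ does not decrease along the recursion. Suppose $s_iw<w$ and $s_iu<u$. The recursion producing $(u,w)$ (case~(1) of Theorem~\ref{thm:MCPif}, case~(4) of Theorem~\ref{thm:recSMCGr}) also involves $(s_iu,s_iw)$, which has the same value of $\ell(w)-\ell(u)$. This cannot always be avoided. For $P=B$ in type $A_2$ with $w=s_1s_2$ and $u=s_1$, the only left descent of $w$ is $s_1$, and $s_1u<u$. The recursion there also involves $(s_1,s_2)\notin\mathcal{Q}$.

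\textbf{How the paper avoids both problems.} It proves the identity for all $(u,w)\in W\times W^P$, not only $u\le w$, by induction on $\ell(w)$ alone.
\begin{itemize}
\item[(a)] Base case $w=\id$, $u\neq\id$. The left side is $0$. The right side is $0$ because $\SMC_y(\mathring{\Sigma}^{ut_\lambda})|_{t_{v\lambda}}\neq 0$ would force $ut_\lambda\le t_{v\lambda}$. This is impossible, since $\ell(ut_\lambda)=\ell(u)+\ell(t_\lambda)>\ell(t_{v\lambda})$.
\item[(b)] Base case $w=\id$, $u=\id$. One compares restrictions at $t_\lambda W$ using tangent weights. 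This is a localization computation via $r^*\gamma|_{t_\mu W}=\gamma|_{t_\mu}$. Note that ``transversality under $q$'' has no meaning here, since $r$ is not algebraic.
\item[(c)] Inductive step. Choose $s_i$ with $s_iw<w$; then $v=s_iw\in W^P$ by Deodhar's lemma. Write $f_{u,w}=fs_i$ with $f=f_{u,v}$ and $s_if=f_{s_iu,v}$. Both have smaller $\ell(v)$, whichever side of $u$ the element $s_iu$ lies on. One uses cases (1)/(4) or (3)/(2) accordingly.
\end{itemize}
Since only left descents of $w$ are used, the situation $s_iw=ws_j$ never enters.
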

As mentioned above, we use the Demazure--Lusztig operator $\calT_i$ to derive the inductive relation between the two sides. In this process, the so-called left Demazure--Lusztig action on $K_T(G/P)$ and $K_T(\Gr)$ is used. This action was studied in \cite{B97, K03, T09, LZZ20, MNS}. 

The Kazhdan--Lusztig R-polynomials are related to the SMC classes of the opposite Schubert cells, see \cref{prop:smc}. In \cite{GLTW}, the authors introduced twisted R-polynomials. In Section \ref{sec:twistedR}, as another application of the recursive relation  of SMC classes, we show that certain limits of the localization of the SMC classes classes of the opposite Schubert cells give these twisted R-polynomials. Finally, in the case of Grassmannian $\Gr_k(\mathbb{C}^n)$, we give a combinatorial formula for the SMC class of open positroid varieties via pipe dreams.

\subsection*{Acknowledgments}
We would like to thank Thomas Lam, J\"org Sch\"urmann, and David Speyer for helpful conversations. C.S. is supported by the National Key R\&D Program of China (No. 2024YFA1014700). R.X. acknowledges the partial support from the NSERC Discovery grant RGPIN-2022-03060, Canada. C.Z. is partially supported by Simons Foundation Travel Support for Mathematicians TSM-00013828.

\section{Preliminaries}
In this section, we recall the definition of the Kazhdan-Lusztig R-polynomials and some properties of the motivic Chern classes of Schubert cells in the flag variety. 

\subsection{Kazhdan-Lusztig R-polynomials}\label{sec:KLRpoly}

In this subsection, we use $W$ to denote a Coxeter group with length function $\ell(\cdot)$ and Bruhat order $\leq$, for example, a finite Weyl group or an affine Weyl group. Let $H(W)$ be the Hecke algebra over the polynomial ring $\mathbb{Z}[q^{\pm 1}]$. It has basis $\{T_w\mid w\in W\}$, with multiplication defined by $(T_s+1)(T_s-q)=0$ for any simple reflection $s$, and $T_wT_v=T_{wv}$ if $\ell(wv)=\ell(w)+\ell(v)$. Define an involution $h\mapsto \bar{h}$ on $H(w)$ by 
\[\overline{\sum a_wT_w} = \sum \bar{a}_w T_{w^{-1}}^{-1},\]
where $\bar{a}_w$ is the involution on $\mathbb{Z}[q^{\pm 1}]$ defined by $\bar{q}=q^{-1}$.
For any $u,w\in W$, the Kazhdan-Lusztig R-polynomial $R_{u,w}(q)\in \mathbb{Z}[q^{\pm 1}]$ is defined by the formula (see \cite{KL79})
\begin{equation}\label{equ:Rpoly}
    T_{w^{-1}}^{-1}=\sum_{u\leq w} \overline{R_{u,w}(q)} q^{-\ell(u)}T_u,
\end{equation}
and we set $R_{u,w}(q)=0$ if $u\nleq w$. From the definition, we can get the following recursive formulae:
\begin{equation}\label{equ:recR}
    R_{u,w}=\begin{cases}
        R_{su,sw} & \textit{ if } su<u \textit{ and }sw<w;\\
        (q-1)R_{su,w}+qR_{su,sw} & \textit{ if } su>u \textit{ and }sw<w.
    \end{cases}
\end{equation}

If $W$ is finite and $w_0$ is the longest element, 
\begin{equation}\label{equ:Rw0}
    R_{w^{-1}w_0,u^{-1}w_0}(q)=R_{u,w}(q)
\end{equation}
for all $u,w\in W$.

\subsection{Motivic Chern class}\label{sec:MCSchu}
We recall the definition of the motivic Chern classes, following
\cite{brasselet.schurmann.yokura:hirzebruch}. For now on let $X$ be a quasi-projective, complex algebraic variety, with an action of $T$.
The (relative) motivic Grothendieck
group $K_0^T(\var/X)$ of varieties over $X$ is the
quotient of the free abelian group generated by symbols $[f: Z \to X]$ where $Z$ is a quasi-projective $T$-variety and $f: Z \to X$ is a $T$-equivariant morphism modulo the additivity relations
\[
[f: Z \to X] = [f: U \to X] + [f:Z \smallsetminus U \to X]
\]
for $U \subseteq Z$ an open invariant subvariety. For every equivariant
morphism $g:X \to Y$ of quasi-projective $T$-varieties there are 
well-defined push-forwards $g_!: K_0^T(\var/X) \to K_0^T(\var/Y)$ given by composition.

Let $K^T(X)$ be the $T$-equivariant $K$-homology group of $X$, i.e., the Grothendieck group of $T$-equivariant coherent sheaves on $X$, see \cite{chriss2009representation}. Let $y$ be a formal variable. The following theorem was proved in the non-equivariant case by
Brasselet, Sch{\"u}rmann and Yokura \cite[Theorem
2.1]{brasselet.schurmann.yokura:hirzebruch}. Minor changes in the argument are needed in the equivariant case -- see also
\cite{feher2021motivic,AMSS24}. 

\begin{thm}\label{thm:existence}\cite[Theorem 4.2]{AMSS24} 
Let $X$ be a quasi-projective, non-singular, complex algebraic variety
with an action of the torus $T$. There exists a unique natural
transformation $\MC_y: K_0^T(\var/X) \to K^T(X)[y]$ satisfying the
following properties: 
\begin{enumerate} 
\item[(1)] 
It is functorial with respect to push-forwards via $T$-equivariant
proper morphisms of non-singular, quasi-projective varieties. 
		
\item[(2)] 
It satisfies the normalization condition 
\[ 
\MC_y[\id_X: X \to X] = \lambda_y(T^*X) := \sum y^i {[\wedge^i T^*X]}
\in K^T(X)[y] \/. 
\]
\end{enumerate} 
\end{thm}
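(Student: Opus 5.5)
The plan is to follow the strategy of Brasselet--Sch\"urmann--Yokura, using Bittner's presentation of the relative Grothendieck group, and to check that each step survives in the $T$-equivariant setting. Uniqueness is the easy half, so I will treat it first.

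\textbf{Uniqueness.} By equivariant resolution of singularities (Hironaka, in its functorial form, which is compatible with a torus action) together with equivariant Nagata compactification (Sumihiro), every class in $K_0^T(\var/X)$ is a $\bbZ$-linear combination of classes $[f:Z\to X]$ with $Z$ non-singular, quasi-projective, and $f$ proper. To see this:
\begin{enumerate}
\item[(a)] Stratify a given $T$-variety by invariant locally closed smooth pieces.
\item[(b)] Compactify each piece equivariantly over $X$.
\item[(c)] Resolve so that the boundary is an invariant simple normal crossing divisor.
\item[(d)] Use the additivity relation to write the open piece as an alternating sum of the compactification and the boundary strata. These strata are again smooth and proper over $X$.
\end{enumerate}
For such a generator, properties (1) and (2) force
\[
\MC_y[f:Z\to X]=f_*\MC_y[\id_Z]=f_*\lambda_y(T^*Z),
\]
so $\MC_y$ is determined.

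\textbf{Existence.} I would take the formula $\MC_y[f]:=f_*\lambda_y(T^*Z)$ on these generators as the definition, and show that it respects all relations. By (the equivariant version of) Bittner's theorem, $K_0^T(\var/X)$ is isomorphic to the free group on isomorphism classes of proper maps $[f:Z\to X]$ with $Z$ smooth, modulo $[\emptyset]=0$ and the blow-up relations
\[
[\mathrm{Bl}_CZ\to X]-[E\to X]=[Z\to X]-[C\to X],
\]
where $C\subset Z$ is a smooth invariant closed subvariety and $E$ is the exceptional divisor. So it suffices to verify the K-theoretic identity
\[
\pi_*\lambda_y(T^*\mathrm{Bl}_CZ)-j_*\lambda_y(T^*E)=\lambda_y(T^*Z)-\iota_*\lambda_y(T^*C)
\]
in $K^T(Z)[y]$, and then push forward along $f$. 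Here $\pi:\mathrm{Bl}_CZ\to Z$ is the blow-down, and $j:E\to Z$, $\iota:C\to Z$ are the induced maps.

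The main obstacle is this blow-up identity, and I would prove it by a local computation.
\begin{itemize}
\item Away from $C$ the identity is tautological.
\item Using the exact sequence $0\to\pi^*\Omega_Z\to\Omega_{\mathrm{Bl}}\to\Omega_{E/C}\to0$ (with $\Omega_{E/C}$ pushed forward from $E$), reduce to computing $\pi_*$ of $\lambda_y$ of relative cotangent classes on the projective bundle $E=\mathbb{P}(N_{C/Z})$.
\item Then apply Bott-type vanishing $R\pi_*\Omega^p_{\mathbb{P}^{r-1}}$ fibrewise, i.e.\ the computation $\chi_y(\mathbb{P}^{r-1})=1-y+\dots+(-y)^{r-1}$ in its relative form.
\end{itemize}
Equivariance is harmless here, since every sheaf involved is canonically $T$-linearized.

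As a cross-check, or as an alternative route if the Bittner approach becomes messy equivariantly, I would use the Du Bois complex:
\[
\MC_y[f:Z\to X]=\sum_p[Rf_!\underline{\Omega}^p_Z]\,y^p.
\]
The additivity relation then follows from the distinguished triangles of filtered Du Bois complexes for open/closed decompositions, and the normalization holds because $\underline{\Omega}^p_X=\Omega^p_X$ for smooth $X$.

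Finally, functoriality (1) holds on generators because push-forward is composition, $(g\circ f)_*=g_*f_*$. By uniqueness this extends to all of $K_0^T(\var/X)$.
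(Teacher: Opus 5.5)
Your proposal is correct and follows essentially the argument the paper relies on. The paper gives no proof of its own: it cites Brasselet--Sch\"urmann--Yokura, whose existence proof uses Bittner's blow-up presentation of the relative Grothendieck group (with the Du Bois complex as an alternative), together with the equivariant adaptation in AMSS; this is exactly your uniqueness-by-resolution plus existence-by-blow-up-relation scheme. The differences are minor: your direct verification of the blow-up identity, via the cotangent sequence on the blow-up and fibrewise Euler characteristics on $\mathbb{P}(N_{C/Z})$, works but could simply be replaced by the standard descent property of $R\pi_*\Omega^p$ for a blow-up along a smooth center; and the equivariant form of Bittner's theorem you invoke rests on equivariant resolution and weak factorization for torus actions, which is precisely the ``minor change'' the paper alludes to.
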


Let $X$ be a quasiprojective complex manifold. Recall the Serre duality functor $\mathcal{D}(-):=\RHom(-,\omega_{X}^\bullet)$ on $K^T(X)$, where 
$\omega_{X}^\bullet=[\wedge^{\dim X}T^*_X][\dim X]$ is the canonical complex. We extend $\mathcal{D}$ to $K^T(X)[y^{\pm1}]$ by $\mathcal{D}(y^i)=y^{-i}$. 

\begin{dfn}\label{def:MCSMC}
    \begin{enumerate}
        \item If $Y \subseteq X$ is a $T$-invariant subvariety, we call
\[ 
\MC_y(Y) := \MC_y[Y \hookrightarrow X]\in K^T(X)[y] \/ 
\] 
the motivic Chern (MC) class of $Y$, and call
\[ 
\SMC_y(Y) := \frac{\MC_y(Y)}{\lambda_y(T^*X)}\in K^T(X)[[y]] \/ 
\] 
the Segre motivic Chern (SMC) class of $Y$.
\item Let $\Omega \subset X$ be a pure dimensional $T$-stable subvariety of $X$. Let
\[ \widetilde{\MC}_y(\Omega) := (-y)^{\dim \Omega} \frac{\mathcal{D}(\MC_y(\Omega))}{\lambda_y(T^*_X)}\in K^T(X)[[y]]  \/. \] 
    \end{enumerate}
\end{dfn}
\begin{rem}
    Here we are following the convention in \cite{AMSS24b}. Notice that $\widetilde{MC}_y(\Omega)$ is denoted by $\SMC_y(\Omega)$ in \cite{MNS,MS22}.
\end{rem}

\subsection{Motivic Chern classes of Schubert cells}\label{sec:MCSchub}

Let $G$ be a complex connected reductive group with Borel subgroup $B$ and opposite Borel subgroup $B^-$. Let $T=B\cap B^-$ be the maximal torus, and let $W$ be the Weyl group and $\{\alpha_i\mid i\in I\}$ be the set of simple roots.
Assume now that $X=G/B$ is the complete flag variety. Since $X$ is smooth, the $K$-homology group $K^T(G/B)$ can be identified with the $K$-cohomology group $K_T(G/B)$. For any $w\in W$, let $X(w)^\circ:=BwB/B$ and $Y(w)^\circ:=B^-wB/B$ be the Schubert cells, where $B^-$ is the opposite Borel subgroup. Let $X(w):=\overline{BwB/B}$ and $Y(w):=\overline{B^-wB/B}$ be the Schubert varieties.  For any $\mathcal{F}\in K_T(X)$, let \[\chi(X, \mathcal{F})= \int_X \mathcal{F} := \sum_i (-1)^i H^i(X, \mathcal{F}) \in K_T(\pt)=R(T)\/, \]
where $R(T)$ is the representation ring of $T$.
In particular, for $\mathcal{E},\mathcal{F}\in K_T(X)$, we have the following nondegenerate Poincar\'e pairing 
\[ \langle - , - \rangle :K_T(X) \otimes K_T(X) \to K_T(\pt); \quad \langle \mathcal{E},\mathcal{F} \rangle := \int_X \mathcal{E}\otimes\mathcal{F} = \chi(X, \mathcal{E}\otimes\mathcal{F}) \/. \] 

Recall that there is an action of the affine Hecke algebra on $K_T(G/B)[y^{\pm 1}]$, see \cite{lusztig:eqK, MNS}. The left $G$ multiplication action on $G/B$ induces a $W=N_G(T)/T$ action on $K_T(G/B)$. For any $w\in W$, we use $w^L$ to denote this action. These operators are not $K_T(\pt)[y]$-linear, and they act on the base ring $K_T(\pt)[y]$ by the natural action of $W$ on $T$. Then we can define the {\em left} Demazure-Lusztig (DL) operators on $K_T(G/B)[y^{\pm 1}]$:
\begin{equation}\label{equ:lDL}
    \mathcal{T}_i^L: = \frac{1+ye^{-\alpha_i}}{1-e^{-\alpha_i}}s_i^L- \frac{1+y}{1-e^{-\alpha_i}};\quad \mathcal{T}_i^{L,\vee}:=\frac{1+ye^{\alpha_i}}{1-e^{\alpha_i}}s_i^L- \frac{1+y}{1-e^{\alpha_i}}.
\end{equation}
Here $\alpha_i$ is a simple root, and $e^{\alpha_i}$ acts as multiplication by $e^{\alpha_i}$ in the base ring $K_T(\pt)$.
Both the operators $\mathcal{T}_i^L$ and 
$\mathcal{T}_i^{L,\vee}$ satisfy the quadratic relation $(\mathcal{T}_i^{L}+1)(\mathcal{T}_i^{L}+y)=0$ (and same for $\mathcal{T}_i^{L,\vee}$) and the braid relations, see \cite{MNS}. Hence, the $T_w^L$ and $T_w^{L,\vee}$ are well defined for any $w\in W$. For any simple root $\alpha_i$, $-w_0\alpha_i$ is also a simple root. We have 
\begin{equation}\label{equ:TLL}
    w_0^L\mathcal{T}_{-w_0\alpha_i}^L w_0^L=\mathcal{T}_{\alpha_i}^{L,\vee}.
\end{equation}

There is another action using the right DL operators.
For any character $\mu\in X^*(T)$, let $\mathcal{L}_\mu:=G\times_B\mathbb{C}_\mu$ denote the line bundle on $G/B$, where $\mathbb{C}_\mu$ is the one-dimensional representation of $B\rightarrow T \rightarrow GL_1(\mathbb{C}_\mu)$. For any simple root $\alpha_i$, let $P_i$ be the corresponding minimal parabolic subgroup. Let $\pi_i:G/B\rightarrow G/P_i$ be the natural projection. Then we can define the following right Demazure-Lusztig (DL) operators on $K_T(G/B)[y^{\pm 1}]$:
\begin{equation}\label{equ:TiTivee}
    \mathcal{T}^R_i:=(\id+y\mathcal{L}_{\alpha_i})\pi_i^*\pi_{i*}-\id;\quad \mathcal{T}^{R,\vee}_i:=\pi_i^*\pi_{i*}(\id+y\mathcal{L}_{\alpha_i})-\id.
\end{equation}
Here $\mathcal{L}_{\alpha_i}$ acts by tensoring with the line bundle $\mathcal{L}_{\alpha_i}$.
It is proved in \textit{loc. cit.} that these operators $\{\mathcal{T}^R_i\mid i\in I\}$ (resp. $\{\mathcal{T}^{R,\vee}_i\mid i\in I\}$) satisfy the quadratic relation $(\mathcal{T}^R_i+1)(\mathcal{T}^R_i+y)=0$ (resp. $(\mathcal{T}^{R,\vee}_i+1)(\mathcal{T}^{R,\vee}_i+y)=0$) and the braid relation. Hence, for any $w\in W$, $\mathcal{T}^{R}_w$ and $\mathcal{T}^{R,\vee}_w$ are well defined. These operators are $K_T(\pt)[y]$-linear.

The torus $T$-fixed points on the flag variety $G/B$ are $\{\dot{w}B\mid w\in W\}$, where $\dot{w}$ is a lift of $w\in W$ to $G$. For any $\gamma\in K_T(G/B)$ and $w\in W$, we let $\gamma|_w\in K_T(\pt)$ denote the pullback of $\gamma$ to the fixed point $\dot{w}B$. For any simple root $\alpha_i$, the BGG operator $\pi_i^*\pi_{i*}$satisfies
\[\pi_i^*\pi_{i*}(\gamma)|_w=\frac{\gamma|_w-e^{w\alpha_i}\gamma|_{ws_i}}{1-e^{w\alpha_i}}.\]

\begin{prop}\label{prop:mcprop}
    Let $w\in W$, and $s_i$ be a simple reflection. We have the following recursive relations:
 \begin{align}
   \label{eq:TR}     \mathcal{T}^R_i (\MC_y(X(w)^\circ))&=\begin{cases} \MC_y(X(ws_i)^\circ) &\textit{ if } ws_i>w;\\
     -(1+y)\MC_y(X(w)^\circ)-y \MC_y(X(ws_i)^\circ) &\textit{ if } ws_i<w. \end{cases}\\
\label{eq:TL}\mathcal{T}_i^L(\MC_y(X(w)^\circ))&=\begin{cases} \MC_y(X(s_i w)^\circ) &\textit{ if } s_iw>w;\\
     -(1+y)\MC_y(X(w)^\circ)-y \MC_y(X(s_iw)^\circ) &\textit{ if } s_iw<w. \end{cases}\\
\label{eq:TRdual}\calT_i^{R,\vee}(\SMC_y(Y(w)^\circ))&=\begin{cases}
         \SMC_y(Y(ws_i)^\circ) & \textit{ if } ws_i<w;\\
         -(1+y)\SMC_y(Y(w)^\circ)-y\SMC_y(Y(ws_i)^\circ) & \textit{ if } ws_i>w.
     \end{cases}\\
  \label{eq:TLdual}   \mathcal{T}_i^{L,\vee}(\SMC_y(Y(w)^\circ))&=\begin{cases}
           -(1+y)\SMC_y(Y(w)^\circ)-y\cdot\SMC_y(Y(s_iw)^\circ)&\textit{ if } s_iw>w;\\
           \SMC_y(Y(s_iw)^\circ) &\textit{ if } s_iw<w.
       \end{cases}
          \end{align}
     In particular, we have the following relation, and the duality:
     \begin{align}
   \label{eq:MCY}  \widetilde{\MC}_y(Y(w)^{\circ})&=\frac{(-y)^{\dim G/B-\ell(w)}}{\prod_{\alpha>0}(1+ye^{-\alpha})}(\calT_{w_0w}^{R,\vee})^{-1}([\calO_{Y(w_0)}])\/,\\
     \label{eq:SMCY}\SMC_y(Y(w)^\circ)&=\frac{1}{\prod_{\alpha> 0}(1+ye^{-\alpha})}\mathcal{T}^{R,\vee}_{(w_0w)^{-1}}([\calO_{Y(w_0)}]).
     \end{align}
     \begin{align*}
     &\langle \MC_y(X(w)^\circ), \widetilde{\MC}_y(Y(u)^{\circ})\rangle  = \delta_{w,u},\\
     \end{align*}
    %\[\MC_y(X(w)^\circ)=\mathcal{T}^R_{w^{-1}}([\calO_{X(id)}])=\mathcal{T}^L_w([\calO_{X(\id)}]),\]
    % \[ \widetilde{\MC}_y(Y(w)^{\circ})=\frac{(-y)^{\dim G/B-\ell(w)}}{\prod_{\alpha>0}(1+ye^{-\alpha})}(\calT_{w_0w}^{R,\vee})^{-1}([\calO_{Y(w_0)}])\/,\]
    % and
    % \[\SMC_y(Y(w)^\circ,X)=\frac{1}{\prod_{\alpha\geq 0}(1+ye^{-\alpha})}\mathcal{T}^{R,\vee}_{(w_0w)^{-1}}([\calO_{Y(w_0)}]).\]
\end{prop}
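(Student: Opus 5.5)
Most of these statements are already in the literature, and I would deduce them from the cited sources plus short computations rather than rederive everything. The plan is in four steps.

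\textbf{Step 1: the right recursion \eqref{eq:TR}.} This comes from \cite{AMSS24}. Let $\pi_i:G/B\to G/P_i$ be the projection. If $ws_i>w$, then $\pi_i^{-1}\pi_i(X(w)^\circ)=X(w)^\circ\sqcup X(ws_i)^\circ$ is a $\mathbb{P}^1$-bundle over $\pi_i(X(w)^\circ)$, and $\pi_i$ restricted to $X(w)^\circ$ is an isomorphism onto its image. Functoriality of $\MC_y$ (\cref{thm:existence}) together with the Verdier--Riemann--Roch formula for the smooth morphism $\pi_i$ gives
\[\pi_i^*\pi_{i*}\MC_y(X(w)^\circ)\cdot\lambda_y(T^*_{\pi_i})=\MC_y(X(w)^\circ)+\MC_y(X(ws_i)^\circ).\]
One then has to match $\lambda_y(T^*_{\pi_i})$ with the factor $(1+y\mathcal{L}_{\alpha_i})$, and I expect this to require a short fixed-point check.

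\textbf{Step 2: the case $ws_i<w$ and the left recursion \eqref{eq:TL}.} For $ws_i<w$ in \eqref{eq:TR}, apply the first case to $ws_i$ and use the quadratic relation $(\mathcal{T}_i^R+1)(\mathcal{T}_i^R+y)=0$; this gives the stated formula by pure algebra. For \eqref{eq:TL} I would argue by localization. Left multiplication by $\dot s_i$ is a $T$-twisted automorphism of $G/B$, and the $P_i$-orbit decomposition on the left gives $\overline{P_iwB}\supset X(w)^\circ\sqcup X(s_iw)^\circ$. The cleanest route is the known formula $\MC_y(X(w)^\circ)|_v$ together with the fact that the left and right operators commute and agree on $[\calO_{X(\id)}]$. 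This yields $\MC_y(X(w)^\circ)=\mathcal{T}^R_{w^{-1}}[\calO_{\pt}]=\mathcal{T}^L_w[\calO_{\pt}]$, and \eqref{eq:TL} then follows from the quadratic relation. I cite \cite{MNS} for the commutation.

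\textbf{Step 3: the dual recursions \eqref{eq:TRdual} and \eqref{eq:TLdual}.} These follow from Steps 1 and 2 by adjunction and then conjugation by $w_0^L$. First, $\mathcal{T}_i^{R,\vee}$ is, up to the twist by $\lambda_y(T^*X)$, the adjoint of $\mathcal{T}^R_i$ with respect to $\langle-,-\rangle$; this holds because $\pi_i^*\pi_{i*}$ is self-adjoint. Second, $w_0^L$ carries $X(w)^\circ$ to $Y(w_0w)^\circ$. Combined with \eqref{equ:TLL}, this transports \eqref{eq:TL} to \eqref{eq:TLdual}, and the analogous argument with the right operators gives \eqref{eq:TRdual}. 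Iterating \eqref{eq:TRdual} from $Y(w_0)^\circ=\{w_0B\}$, whose SMC class is $[\calO_{Y(w_0)}]/\prod_{\alpha>0}(1+ye^{-\alpha})$ because the normal space at $w_0$ has those weights, yields \eqref{eq:SMCY}.

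\textbf{Step 4: \eqref{eq:MCY} and the duality.} Apply Serre duality $\mathcal{D}$ to \eqref{eq:SMCY}. Since $\mathcal{D}$ intertwines $\mathcal{T}^{R,\vee}_i$ with $y^{-1}$ times the inverse operator (a short computation with $\pi_i^!$), this gives \eqref{eq:MCY}. The pairing $\langle\MC_y(X(w)^\circ),\widetilde{\MC}_y(Y(u)^\circ)\rangle=\delta_{w,u}$ then follows by moving all operators to one side via adjunction. This reduces the claim to $\langle[\calO_{\pt}],\mathcal{T}\text{-word}\cdot[\calO_{Y(w_0)}]\rangle$, which is $\delta$ by support: the cells $X(w)^\circ$ and $Y(u)^\circ$ meet transversally exactly when $u\le w$, and in a single point when $u=w$.

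The main obstacle is the sign and $y$-power bookkeeping under $\mathcal{D}$ and the adjoints, in Step 3 and especially Step 4. I would verify these normalizations at $G=SL_2$ before trusting the general statement.
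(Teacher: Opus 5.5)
Your argument for \eqref{eq:TLdual} is exactly the paper's proof: you transport \eqref{eq:TL} by $w_0^L$, using $w_0^L(X(w)^\circ)=Y(w_0w)^\circ$ and \eqref{equ:TLL}. You should also state explicitly that $\calT_i^L$ commutes with multiplication by the $G$-equivariant class $\lambda_y(T^*(G/B))$; that is what lets the recursion pass from $\MC_y$ to $\SMC_y$. The paper simply cites \cite{MNS} and \cite{MS22} for everything else, and your Steps 1--2 are correct reconstructions of the standard arguments behind those citations.

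However, \eqref{eq:TRdual} does not come from an ``analogous'' argument. The operator $w_0^L$ commutes with $\calT_i^R$, so transporting \eqref{eq:TR} only gives $\calT_i^R\MC_y(Y(w)^\circ)=\MC_y(Y(ws_i)^\circ)$ for $ws_i<w$. The switch to $\calT_i^{R,\vee}$ comes from the identity $\calT_i^{R,\vee}\bigl(\gamma/\lambda_y(T^*(G/B))\bigr)=\calT_i^R(\gamma)/\lambda_y(T^*(G/B))$. This follows from $\lambda_y(T^*(G/B))=(1+y\mathcal{L}_{\alpha_i})\,\pi_i^*\lambda_y(T^*(G/P_i))$ and the projection formula. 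It is not the adjointness $\langle\calT_i^R a,b\rangle=\langle a,\calT_i^{R,\vee}b\rangle$, which holds with no twist and is a different statement.

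Similarly, in Step 4 the usable identity is $\mathcal{D}\circ\calT_i^R\circ\mathcal{D}=(\calT_i^R)^{-1}$, with no factor $y^{-1}$. It comes from relative duality for the $\mathbb{P}^1$-bundle $\pi_i$, whose relative dualizing sheaf is $\mathcal{L}_{\alpha_i}$, together with $\mathcal{D}(y)=y^{-1}$. Apply it to $\MC_y(Y(w)^\circ)=\calT^R_{(w_0w)^{-1}}[\calO_{Y(w_0)}]$, use $\mathcal{D}[\calO_{Y(w_0)}]=[\calO_{Y(w_0)}]$, and divide by $\lambda_y(T^*(G/B))$ via the intertwining above. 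This gives \eqref{eq:MCY} on the nose.

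The genuine gap is the duality. Supports, together with the transversal one-point intersection $X(w)^\circ\cap Y(w)^\circ=\{wB\}$, only show two things: the pairing $\langle\MC_y(X(w)^\circ),\widetilde{\MC}_y(Y(u)^\circ)\rangle$ vanishes for $u\not\le w$, and it equals $1$ for $u=w$. For $u<w$ the cells meet in a positive-dimensional open Richardson variety, and nothing about supports forces the pairing to vanish. Equivalently, after moving all operators to one side you must localize $\calT^{R,\vee}_w(\calT^{R,\vee}_{w_0u})^{-1}[\calO_{Y(w_0)}]$ at $\id B$, and whether that class reaches $\id B$ is precisely what has to be proved.

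The missing step is an induction on $\ell(w)$ that moves one operator at a time. From \eqref{eq:MCY} and the quadratic relation one gets
\[
\calT_i^{R,\vee}\widetilde{\MC}_y(Y(u)^\circ)=\begin{cases}-y\,\widetilde{\MC}_y(Y(us_i)^\circ), & us_i>u,\\ \widetilde{\MC}_y(Y(us_i)^\circ)-(1+y)\,\widetilde{\MC}_y(Y(u)^\circ), & us_i<u.\end{cases}
\]
For $ws_i>w$, adjointness gives $\langle\MC_y(X(ws_i)^\circ),\widetilde{\MC}_y(Y(u)^\circ)\rangle=\langle\MC_y(X(w)^\circ),\calT_i^{R,\vee}\widetilde{\MC}_y(Y(u)^\circ)\rangle$. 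The induction hypothesis then yields $\delta_{ws_i,u}$ in both cases, because $w=us_i$ in the first case, or $w=u$ in the second, is incompatible with $ws_i>w$.

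Only the base case $w=\id$ is a support argument. There $\widetilde{\MC}_y(Y(u)^\circ)|_{\id}=0$ for $u\neq\id$. For $u=\id$, the identity $\mathcal{D}(\lambda_y(T^*X))=(-y)^{-\dim X}\lambda_y(T^*X)$ on the open cell gives the value $1$.

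Equivalently, you need the Hecke-algebra fact that the coefficient of $T_{\id}$ in $T_wT_{u^{-1}}$ is $\delta_{w,u}q^{\ell(w)}$ with $q=-y$. That is an algebraic input, not a geometric one.
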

\begin{proof}
\cref{eq:TR,eq:TL,eq:MCY} and the duality are proved in \cite{MNS}, while \cref{eq:TRdual,eq:SMCY} are proved in \cite{MS22}. 

We will prove \eqref{eq:TLdual} now. 
    First of all, we have the following formula (see \cite{MNS}),
    \[w_0^L(\MC(Y(w)^\circ))=\MC_y(X(w_0w)^\circ).\]
    The left operators $T_w^L$ commutes with multiplication by $\lambda_y(T^*(G/B))$.
    Hence, by the definition of the SMC class, \cref{equ:TLL} and \cref{prop:mcprop}, we get
    \begin{align*}
       \mathcal{T}_i^{L,\vee}(\SMC_y(Y(w)^\circ))=&w_0^L\mathcal{T}_{-w_0\alpha_i}^L w_0^L(\SMC_y(Y(w)^\circ))\\
       =&\frac{1}{\lambda_y(T^*(G/B))}w_0^L\mathcal{T}_{-w_0\alpha_i}^L w_0^L(\MC(Y(w)^\circ))\\
       =&\frac{1}{\lambda_y(T^*(G/B))}w_0^L\mathcal{T}_{-w_0\alpha_i}^L (\MC(X(w_0w)^\circ))\\
       =&\frac{1}{\lambda_y(T^*(G/B))}\begin{cases}
           -(1+y)w_0^L (\MC(X(w_0w)^\circ))-yw_0^L (\MC(X(w_0s_iw)^\circ))&\textit{ if } s_iw>w;\\
           w_0^L (\MC(X(w_0s_iw)^\circ)) &\textit{ if } s_iw<w.
       \end{cases}
       \\
       =&\begin{cases}
           -(1+y)\SMC_y(Y(w)^\circ)-y\SMC_y(Y(s_iw)^\circ)&\textit{ if } s_iw>w;\\
           \SMC_y(Y(s_iw)^\circ) &\textit{ if } s_iw<w.
       \end{cases}
    \end{align*}
\end{proof}

Using the Kazhdan-Lusztig R-polynomial, we get
\begin{prop}\label{prop:smc}
    For any $w\in W$,
    \[\SMC_y(Y(w)^\circ)=\sum_{u\geq w} R_{w,u}(-y)\cdot \widetilde{\MC}_y(Y(u)^{\circ}).\]
\end{prop}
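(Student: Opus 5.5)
The plan is to compute the coefficients of $\SMC_y(Y(w)^\circ)$ in the basis $\{\widetilde{\MC}_y(Y(u)^\circ)\}$ by pairing, and then to show these coefficients satisfy the same recursion as the R-polynomials. By the duality $\langle \MC_y(X(u)^\circ), \widetilde{\MC}_y(Y(v)^\circ)\rangle=\delta_{u,v}$ in \cref{prop:mcprop}, any class $\gamma$ in (the appropriate localization/completion of) $K_T(G/B)[[y]]$ satisfies
\[\gamma=\sum_u \langle \MC_y(X(u)^\circ),\gamma\rangle\,\widetilde{\MC}_y(Y(u)^\circ).\]
So it suffices to prove
\[c_{w,u}:=\langle \MC_y(X(u)^\circ),\SMC_y(Y(w)^\circ)\rangle = R_{w,u}(-y)\quad\text{for all } u,w\in W,\]
with the convention that $R_{w,u}=0$ unless $w\le u$. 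I will argue by descending induction on $\ell(w)$.

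\emph{Base case $w=w_0$.} Here $Y(w_0)^\circ$ is the $T$-fixed point $w_0B$, so $\MC_y(Y(w_0)^\circ)=[\calO_{w_0}]$. Serre duality sends the skyscraper sheaf of a point to itself, and the dimension factor is $(-y)^0=1$. Hence
\[\SMC_y(Y(w_0)^\circ)=\frac{[\calO_{w_0}]}{\lambda_y(T^*X)}=\widetilde{\MC}_y(Y(w_0)^\circ),\]
which gives $c_{w_0,u}=\delta_{w_0,u}=R_{w_0,u}(-y)$.

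\emph{Adjointness.} The key tool is that $\calT_i^R$ and $\calT_i^{R,\vee}$ are adjoint for $\langle-,-\rangle$. The operator $\pi_i^*\pi_{i*}$ is self-adjoint by the projection formula, since both sides of the pairing equal $\int_{G/P_i}\pi_{i*}a\cdot\pi_{i*}b$. Multiplication by $(1+y\mathcal{L}_{\alpha_i})$ is also self-adjoint. It follows that
\[\langle (1+y\mathcal{L}_{\alpha_i})\pi_i^*\pi_{i*}a,b\rangle=\langle a,\pi_i^*\pi_{i*}(1+y\mathcal{L}_{\alpha_i})b\rangle,\]
that is, $\langle \calT^R_i a,b\rangle=\langle a,\calT_i^{R,\vee}b\rangle$.

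\emph{Inductive step.} Now let $w\ne w_0$, and choose $s_i$ with $ws_i>w$. Applying \eqref{eq:TRdual} to $ws_i$ (note that $(ws_i)s_i=w<ws_i$) gives $\SMC_y(Y(w)^\circ)=\calT_i^{R,\vee}\SMC_y(Y(ws_i)^\circ)$. By adjointness,
\[c_{w,u}=\langle \calT_i^R\MC_y(X(u)^\circ),\SMC_y(Y(ws_i)^\circ)\rangle.\]
Expanding with \eqref{eq:TR} gives two cases:
\[c_{w,u}=\begin{cases} c_{ws_i,us_i} & \text{if } us_i>u,\\ -(1+y)c_{ws_i,u}-y\,c_{ws_i,us_i} & \text{if } us_i<u.\end{cases}\]

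\emph{Matching the R-polynomial recursion.} It remains to check that $R_{w,u}(-y)$ satisfies the same recursion. Since $R_{a,b}=R_{a^{-1},b^{-1}}$ (this follows by applying the anti-involution $T_x\mapsto T_{x^{-1}}$ to \eqref{equ:Rpoly}), the recursion \eqref{equ:recR} also holds with $s$ acting on the right. Setting $q=-y$:
\begin{itemize}
\item If $us_i<u$, the second case of \eqref{equ:recR} applied to the pair $(w,u)$, with $ws_i>w$ and $us_i<u$, gives $R_{w,u}=(-y-1)R_{ws_i,u}+(-y)R_{ws_i,us_i}$.
\item If $us_i>u$, the first case applied to the pair $(ws_i,us_i)$, both of which have right descent $s_i$, gives $R_{ws_i,us_i}=R_{w,u}$.
\end{itemize}
Both agree with the recursion for $c_{w,u}$, so induction closes the argument.

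I expect the main obstacles to be the adjointness statement (in particular, checking that the pairing and the completion in $y$ behave well) and making sure the right-handed form of \eqref{equ:recR} is justified. The rest is bookkeeping.
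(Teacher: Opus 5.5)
Your proof is correct, and it takes a different route from the paper's.

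\textbf{The paper's route.} The paper argues at the level of operators. It rewrites \eqref{equ:Rpoly} and uses the $w_0$-symmetry \eqref{equ:Rw0} to obtain the Hecke-algebra identity $\calT^{R,\vee}_{w^{-1}w_0}=\sum_{u\ge w}(-y)^{\dim G/B-\ell(u)}R_{w,u}(-y)\,(\calT^{R,\vee}_{w_0u})^{-1}$. It then applies both sides to $[\calO_{Y(w_0)}]$ and reads off the result from the closed formulas \eqref{eq:SMCY} and \eqref{eq:MCY}. Those formulas express $\SMC_y(Y(w)^\circ)$ and $\widetilde{\MC}_y(Y(u)^\circ)$ as Hecke operators applied to the same point class.

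\textbf{Your route.} You compute the coefficients $\langle \MC_y(X(u)^\circ),\SMC_y(Y(w)^\circ)\rangle$ directly, by descending induction on $\ell(w)$. The ingredients are:
\begin{itemize}
\item the one-step recursions \eqref{eq:TR} and \eqref{eq:TRdual};
\item the adjointness of $\calT_i^R$ and $\calT_i^{R,\vee}$, which is exactly Lemma~\ref{lem:piandTi} with $P=G$;
\item the right-handed form of \eqref{equ:recR}.
\end{itemize}
Your individual checks are all accurate. In the base case, duality fixes the skyscraper class of a fixed point, which is consistent with \eqref{eq:MCY} and \eqref{eq:SMCY} at $w=w_0$. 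The symmetry $R_{a,b}=R_{a^{-1},b^{-1}}$ does follow from the anti-involution $T_x\mapsto T_{x^{-1}}$. Both recursion cases match.

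\textbf{What each buys.} The paper's argument is a two-line computation once \eqref{eq:MCY} and \eqref{eq:SMCY} are taken from \cite{MNS,MS22}. Yours is more self-contained: it needs neither those closed formulas nor \eqref{equ:Rw0}. It also makes explicit that the coefficients are pairings against MC classes of Schubert cells. That same pairing-plus-recursion mechanism, run in the opposite direction (from the $R$-polynomial expansion to the recursion), is what the paper uses to prove Theorem~\ref{thm:SMCrecaff}. In that affine setting there is no $w_0$, either to start your induction or to write closed formulas.
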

\begin{proof}
    Since $\{\mathcal{T}_w^{R,\vee}\mid w\in W\}$ satisfy the relations for the Hecke algebra of $W$ with $q=-y$, we get by \cref{equ:Rpoly} and \cref{equ:Rw0},
    \begin{align*}
        \mathcal{T}^{R,\vee}_{w^{-1}w_0}=&\sum_{u\geq w} (-y)^{\ell(u^{-1}w_0)}R_{u^{-1}w_0,w^{-1}w_0}(-y)\cdot (\mathcal{T}^{R,\vee}_{w_0u})^{-1}\\
        =&\sum_{u\geq w} (-y)^{\dim G/B-\ell(u)}R_{w,u}(-y)\cdot (\mathcal{T}^{R,\vee}_{w_0u})^{-1}.
    \end{align*}
    Applying it to the class $[\calO_{Y(w_0)}]$, and using \cref{prop:mcprop}, we get the desired equality.
\end{proof}

\section{Recursion for the SMC classes in the finite case}
In this section, we study the Segre motivic Chern class for the open projected Richardson variety in the partial flag variety. 

\subsection{MC class of open Richardson variety}
Recall that for any $u\leq w$, the open Richardson variety in the complete flag variety is $\mathring{R}_{u,w}:=X(w)^\circ\cap Y(u)^\circ\subset G/B$. We let $\mathring{R}_{u,w}=\varnothing$ if $u\nleq w$.
\begin{dfn}\label{def:MCR}
    We define the MC class of the open Richardson variety $\mathring{R}_{u,w}$ to be 
    \[\MC_y(\mathring{R}_{u,w}):=\MC_y(X(w)^\circ)\otimes \SMC_y(Y(u)^\circ)\in K_T(G/B)[[y]].\]
\end{dfn}
\begin{rem}\label{rem:schu}
    By an unpublished result of Sch\"urmann, which is the K-theoretic analogue of \cite{Sch17}, the right hand side of the above equation equals the MC classes of $\mathring{R}_{u,w}$ inside the full flag variety $G/B$. Hence, the notation is unambiguous.
\end{rem}

Recall the DL operators $\calT_i^{L}, \calT_i^{L,\vee}$ from \cref{equ:lDL}.
\begin{lem}\label{lem:TT}
    For any $\varphi,\psi\in K_T^*(G/B)[[y]]$, we have
    $$(1+y)s_i^L(\varphi\cdot \psi)+(1-e^{-\alpha_i})\cdot s_i^L(\varphi\cdot \calT_i^{L,\vee}(\psi))
    =(1+y)\varphi\cdot\psi+(1-e^{-\alpha_i})\cdot(\calT_i^L(\varphi)\cdot\psi).$$
\end{lem}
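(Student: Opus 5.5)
This is a direct computation: expand both Demazure--Lusztig operators via \cref{equ:lDL} and use that $s_i^L$ is a ring automorphism of $K_T(G/B)[[y]]$. It is semilinear over $K_T(\pt)[y]$, acting on the base by $s_i$, so $s_i^L(e^{-\alpha_i}\cdot)=e^{\alpha_i}s_i^L(\cdot)$. It is also multiplicative, $s_i^L(\varphi\psi)=s_i^L(\varphi)\,s_i^L(\psi)$, because it is pullback along left multiplication by $\dot s_i$, up to the twist of the $T$-action. Multiplicativity and the involution property $(s_i^L)^2=\id$ are the only structural inputs needed.

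First I compute the left-hand side. By definition,
\[
\calT_i^{L,\vee}(\psi)=\frac{1+ye^{\alpha_i}}{1-e^{\alpha_i}}s_i^L(\psi)-\frac{1+y}{1-e^{\alpha_i}}\psi .
\]
Applying $s_i^L$ to $\varphi\cdot\calT_i^{L,\vee}(\psi)$ sends the coefficients to their $s_i$-images and $s_i^L(s_i^L\psi)$ to $\psi$. This gives
\[
s_i^L\bigl(\varphi\cdot \calT_i^{L,\vee}(\psi)\bigr)=\frac{1+ye^{-\alpha_i}}{1-e^{-\alpha_i}}\,s_i^L(\varphi)\,\psi-\frac{1+y}{1-e^{-\alpha_i}}\,s_i^L(\varphi)\,s_i^L(\psi).
\]
Multiplying by $(1-e^{-\alpha_i})$ clears the denominators. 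The second term then becomes $-(1+y)s_i^L(\varphi\psi)$, which cancels exactly the first summand $(1+y)s_i^L(\varphi\psi)$ on the left-hand side. The whole left-hand side therefore equals $(1+ye^{-\alpha_i})\,s_i^L(\varphi)\,\psi$.

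Next I compute the right-hand side. We have
\[
(1-e^{-\alpha_i})\calT_i^L(\varphi)=(1+ye^{-\alpha_i})s_i^L(\varphi)-(1+y)\varphi .
\]
Multiplying by $\psi$ and adding $(1+y)\varphi\psi$ cancels the last term, leaving again $(1+ye^{-\alpha_i})\,s_i^L(\varphi)\,\psi$. So the two sides agree.

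The only potential obstacle is justifying the manipulation of denominators $1-e^{\pm\alpha_i}$. After multiplication by $(1-e^{-\alpha_i})$ every expression is polynomial, so the identity can be checked in the localized ring $K_T(G/B)[[y]]\otimes\Frac(K_T(\pt))$, where $K_T(G/B)$ embeds since it is a free $K_T(\pt)$-module. Alternatively, one can verify the identity pointwise at $T$-fixed points, where $(s_i^L\gamma)|_w=s_i(\gamma|_{s_iw})$, and rerun the same cancellation there.
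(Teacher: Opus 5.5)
Your proposal is correct and follows essentially the paper's route. The paper also expands $\calT_i^L(\varphi)\cdot\psi$ and $\varphi\cdot\calT_i^{L,\vee}(\psi)$ via \cref{equ:lDL} and compares them, tacitly using the multiplicativity, $s_i$-semilinearity and involutivity of $s_i^L$ that you make explicit to reduce both sides to $(1+ye^{-\alpha_i})\,s_i^L(\varphi)\,\psi$.
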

\begin{proof}
    By the definition of $\calT_i^L$ and $\calT_i^{L,\vee}$, we have 
    \begin{align*}\calT_i^L(\varphi)\cdot\psi&=\frac{1+ye^{-\alpha_i}}{1-e^{-\alpha_i}}\psi\cdot s_i^L(\varphi)- \frac{1+y}{1-e^{-\alpha_i}}\varphi\cdot\psi,\\
    \varphi\cdot \calT_i^{L,\vee}(\psi)&= \frac{1+ye^{\alpha_i}}{1-e^{\alpha_i}}\varphi\cdot s_i^L(\psi)- \frac{1+y}{1-e^{\alpha_i}}\varphi\cdot\psi.\end{align*}
    The lemma follows from comparison of the two identities. 
\end{proof}
Applying the Lemma to $\varphi=\MC_y(X(w)^\circ)$ and $\psi=\SMC_y(Y(u)^\circ)$, and using \cref{prop:mcprop}, we get
\begin{prop}\label{prop:recforMCB}
For any $u,w\in W$ and simple root $\alpha_i$, we have
\begin{enumerate}
    \item If $s_iw>w$ and $s_iu>u$,
    \begin{align*}
       &(1+y)e^{-\alpha_i}s_i^L(\MC_y(\mathring{R}_{u,w}))-
    (1+y)\MC_y(\mathring{R}_{u,w})\\
    =&y(1-e^{-\alpha_i})\cdot s_i^L(\MC_y(\mathring{R}_{s_iu,w}))+(1-e^{-\alpha_i})\cdot \MC_y(\mathring{R}_{u,s_iw}). 
    \end{align*}
    
    % $$(1+y)s_i^L(\MC_y(\mathring{R}_{u,w}))+(1-e^{-\alpha_i})\cdot s_i^L(-(1+y)\MC_y(\mathring{R}_{u,w})-y\MC_y(\mathring{R}_{s_iu,w}))
    % =(1+y)\MC_y(\mathring{R}_{u,w})+(1-e^{-\alpha_i})\cdot \MC_y(\mathring{R}_{u,s_iw}).$$
    
    \item If $s_iw>w$ and $s_iu<u$,
    \begin{align*}
        &(1+y)s_i^L(\MC_y(\mathring{R}_{u,w}))-
    (1+y)\MC_y(\mathring{R}_{u,w})\\=&(1-e^{-\alpha_i})\cdot\MC_y(\mathring{R}_{u,s_iw})-(1-e^{-\alpha_i})\cdot s_i^L(\MC_y(\mathring{R}_{s_iu,w})).
    \end{align*}

    \item If $s_iw<w$ and $s_iu>u$,
    \begin{align*}
        &(1+y)e^{-\alpha_i}(s_i^L-\id)(\MC_y(\mathring{R}_{u,w}))\\=&y(1-e^{-\alpha_i})\cdot s_i^L(\MC_y(\mathring{R}_{s_iu,w}))-y(1-e^{-\alpha_i})\MC_y(\mathring{R}_{u,s_iw}).
    \end{align*}
  
    \item If $s_iw<w$ and $s_iu<u$,
    \begin{align*}
        &(1+y)(s_i^L-e^{-\alpha_i})(\MC_y(\mathring{R}_{u,w}))\\=&-(1-e^{-\alpha_i})\cdot s_i^L(\MC_y(\mathring{R}_{s_iu,w}))
    -y(1-e^{-\alpha_i})\MC_y(\mathring{R}_{u,s_iw}).
    \end{align*}
  
\end{enumerate}
\end{prop}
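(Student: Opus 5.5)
The plan is to specialize \cref{lem:TT} to $\varphi=\MC_y(X(w)^\circ)$ and $\psi=\SMC_y(Y(u)^\circ)$. By \cref{def:MCR}, $\varphi\cdot\psi=\MC_y(\mathring{R}_{u,w})$. More generally, every product of an $\MC_y(X(\cdot)^\circ)$ with an $\SMC_y(Y(\cdot)^\circ)$ is again an $\MC_y(\mathring{R}_{\cdot,\cdot})$, using the convention $\mathring{R}_{u,w}=\varnothing$ when $u\nleq w$; this is consistent because the product of the classes vanishes by support. We then substitute the four possible values of $\calT_i^L(\varphi)$ and $\calT_i^{L,\vee}(\psi)$ given by \eqref{eq:TL} and \eqref{eq:TLdual}. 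The sign of $s_iw$ versus $w$ decides the form of $\calT_i^L(\varphi)$, and that of $s_iu$ versus $u$ decides the form of $\calT_i^{L,\vee}(\psi)$, which produces the four cases. In each case we collect terms and simplify with the identity $(1+y)-(1+y)(1-e^{-\alpha_i})=(1+y)e^{-\alpha_i}$.

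For case (1), $s_iw>w$ gives $\calT_i^L(\varphi)=\MC_y(X(s_iw)^\circ)$. Also $s_iu>u$ gives $\calT_i^{L,\vee}(\psi)=-(1+y)\psi-y\SMC_y(Y(s_iu)^\circ)$. Hence the left side of \cref{lem:TT} equals
\[(1+y)e^{-\alpha_i}s_i^L(\MC_y(\mathring R_{u,w}))-y(1-e^{-\alpha_i})s_i^L(\MC_y(\mathring R_{s_iu,w})),\]
and the right side equals $(1+y)\MC_y(\mathring R_{u,w})+(1-e^{-\alpha_i})\MC_y(\mathring R_{u,s_iw})$. Rearranging gives (1).

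For case (2), $\calT_i^{L,\vee}(\psi)=\SMC_y(Y(s_iu)^\circ)$, so the left side is $(1+y)s_i^L(\MC_y(\mathring R_{u,w}))+(1-e^{-\alpha_i})s_i^L(\MC_y(\mathring R_{s_iu,w}))$. The right side is as in case (1), which gives (2).

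For case (3), $s_iw<w$ gives $\calT_i^L(\varphi)=-(1+y)\varphi-y\MC_y(X(s_iw)^\circ)$. The right side then becomes $(1+y)e^{-\alpha_i}\MC_y(\mathring R_{u,w})-y(1-e^{-\alpha_i})\MC_y(\mathring R_{u,s_iw})$, while the left side is as in case (1). Moving the $(1+y)e^{-\alpha_i}$ terms to one side gives (3).

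Case (4) combines the left side of case (2) with the right side of case (3), which gives (4).

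The only delicate point is a scalar convention. The operator $s_i^L$ is semilinear: it acts on $K_T(\pt)$ by $s_i$. In \cref{lem:TT}, the coefficient $(1-e^{-\alpha_i})$ stands outside $s_i^L$. So when expanding $s_i^L$ applied to $\varphi\cdot\calT_i^{L,\vee}(\psi)$, the scalars $-(1+y)$ and $-y$ pass through unchanged, since they involve only $y$, which $s_i^L$ fixes. No $e^{\pm\alpha_i}$ needs to be moved past $s_i^L$. It remains to check the bookkeeping in each case, in particular the sign coming from $-y$ and the cancellation producing $e^{-\alpha_i}$. This is routine, and I expect no genuine obstacle beyond it.
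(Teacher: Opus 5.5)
Your proposal is correct and follows the paper's argument: the paper also specializes \cref{lem:TT} to $\varphi=\MC_y(X(w)^\circ)$ and $\psi=\SMC_y(Y(u)^\circ)$, then substitutes \eqref{eq:TL} and \eqref{eq:TLdual}. Your case-by-case expansions check out, including the simplification $(1+y)-(1+y)(1-e^{-\alpha_i})=(1+y)e^{-\alpha_i}$ and the point that only $y$-scalars pass through $s_i^L$, and your support-vanishing remark correctly justifies the convention $\mathring{R}_{u,w}=\varnothing$ for $u\nleq w$.
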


\subsection{MC class of the open projected Richardson variety}
Now let $P$ be a parabolic subgroup containing the Borel subgroup $B$, and let $\pi:G/B\rightarrow G/P$ be the natural projection. Let $W_P\subset W$ be the Weyl group determined by $P$, and let $W^P$ be the set of minimal length representatives in the cosets $W/W_P$.  
\begin{dfn}\label{def:MCproR}
    For any $u\in W$, $w\in W^P$, 
    \begin{enumerate}
        \item the open projected Richardson variety is defined by $\mathring{\Pi}_{u,w}=\pi(\mathring{R}_{u,w})$; 
        \item the motivic Chern class of $\mathring{\Pi}_{u,w}$ is defined by
        \[\MC_y(\mathring{\Pi}_{u,w}):=\pi_*\MC_y(\mathring{R}_{u,w}).\]
    \end{enumerate}
\end{dfn}
\begin{rem}
    By the functoriality of $\MC_y$ and \cref{rem:schu}, the class $\pi_*\MC_y(\mathring{R}_{u,w})$ coincides with the usual motivic Chern class of $\mathring{\Pi}_{u,w}$ if we assume Sch\"urmann's K-theoretic intersection formula. Moreover, $\mathring{\Pi}_{u,w}=\varnothing$ if $u\nleq w$.  
\end{rem}

Recall the Demazure--Lusztig operators $\calT_j^R, \calT_j^{R, \vee}$ from \cref{equ:TiTivee}.
\begin{lem}\label{lem:piandTi}
    For any simple reflection $s_j\in W_P$, and $\calF,\calG\in K_T(G/B)[y]$, we have
    \[\pi_*(\mathcal{T}^R_j(\calF)\otimes \calG)=\pi_*(\calF\otimes\mathcal{T}^{R,\vee}_j(\calG)).\]
\end{lem}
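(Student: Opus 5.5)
We want $\pi_*(\mathcal{T}^R_j(\calF)\otimes \calG)=\pi_*(\calF\otimes\mathcal{T}^{R,\vee}_j(\calG))$ for $s_j\in W_P$. Factor $\pi$ as $G/B\xrightarrow{\pi_j}G/P_j\xrightarrow{\rho}G/P$. This is possible because $s_j\in W_P$ gives $P_j\subset P$. It then suffices to prove the stronger identity
\[
\pi_{j*}(\mathcal{T}^R_j(\calF)\otimes \calG)=\pi_{j*}(\calF\otimes\mathcal{T}^{R,\vee}_j(\calG))\in K_T(G/P_j)[y],
\]
and apply $\rho_*$ to both sides.

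Expand both sides using \cref{equ:TiTivee}. The left side is
\[
\pi_{j*}\big((1+y\mathcal{L}_{\alpha_j})\pi_j^*\pi_{j*}(\calF)\otimes\calG\big)-\pi_{j*}(\calF\otimes\calG).
\]
By the projection formula this equals
\[
\pi_{j*}(\calF)\otimes\pi_{j*}\big((1+y\mathcal{L}_{\alpha_j})\calG\big)-\pi_{j*}(\calF\otimes\calG).
\]
The right side is
\[
\pi_{j*}\big(\calF\otimes\pi_j^*\pi_{j*}((1+y\mathcal{L}_{\alpha_j})\calG)\big)-\pi_{j*}(\calF\otimes\calG).
\]
Again by the projection formula this equals
\[
\pi_{j*}(\calF)\otimes\pi_{j*}\big((1+y\mathcal{L}_{\alpha_j})\calG\big)-\pi_{j*}(\calF\otimes\calG).
\]
The two expressions agree. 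So the identity is the formal adjointness of $\mathcal{T}_j^R$ and $\mathcal{T}_j^{R,\vee}$ relative to $\pi_{j*}$.

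The only points needing care are the following:
\begin{itemize}
\item $\pi_j$ is a smooth proper $\mathbb{P}^1$-bundle, so the projection formula $\pi_{j*}(\pi_j^*a\otimes b)=a\otimes\pi_{j*}b$ holds in equivariant K-theory.
\item Tensoring with $\mathcal{L}_{\alpha_j}$ is a $K_T$-module map, so it can be grouped with $\calG$.
\item Everything extends $y$-linearly.
\end{itemize}
I expect no real obstacle. The mild subtlety is only in confirming that $\rho_*\circ\pi_{j*}=\pi_*$, which is functoriality of proper pushforward. Alternatively, one could check the identity via localization and the BGG formula, but the projection-formula argument is cleaner.
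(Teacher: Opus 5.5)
Your proposal is correct and is essentially the paper's proof. Both factor $\pi=\rho\circ\pi_j$ via $P_j\subset P$ and use the projection formula for $\pi_j$ to move $\pi_j^*\pi_{j*}$ from $\calF$ to $\calG$. You absorb the factor $(1+y\mathcal{L}_{\alpha_j})$ into $\calG$ and write this step out explicitly, where the paper leaves it to ``the definitions''; in both cases one then pushes forward along $\rho$.
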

\begin{proof}
    Recall $\pi_j$ is the natural projection from $G/B$ to $G/P_j$. Since $s_j\in W_P$, $P_j\subset P$. Let $f:G/P_j\rightarrow G/P$ be the natural projection. Then $\pi=f\circ \pi_j$. Therefore, by the projection formula, we get
    \begin{align*}
        \pi_*(\pi_j^*\pi_{j*}(\calF)\otimes \calG)=&f_*\pi_{j*}(\pi_j^*\pi_{j*}(\calF)\otimes \calG)\\
        =&f_*(\pi_{j*}(\calF)\otimes \pi_{j*}(\calG))\\
        =&f_*\pi_{j*}(\calF\otimes \pi_j^{*}\pi_{j*}(\calG))\\
        =&\pi_*(\calF\otimes \pi_j^{*}\pi_{j*}(\calG)).
    \end{align*}
    Then the lemma follows from this and the definitions of $\mathcal{T}^R_j$ and $\mathcal{T}_j^{R,\vee}$.
\end{proof}
Using this lemma, we get the following formula. 
\begin{cor}\label{cor:pinotminimal}
    For any $u\in W$, $w\in W^P$, $s_j\in W_P$, we have
    \[
    \pi_*(\MC_y(\mathring{R}_{u,ws_j}))=\begin{cases}
        \MC_y(\mathring{\Pi}_{us_j,w}) & \textit{ if } us_j<u;\\
        -(1+y)\MC_y(\mathring{\Pi}_{u,w})-y\MC_y(\mathring{\Pi}_{us_j,w}) & \textit{ if } us_j>u.
        \end{cases}\]
\end{cor}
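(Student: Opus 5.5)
Since $w\in W^P$ and $s_j\in W_P$, we have $ws_j>w$. The first case of \eqref{eq:TR} therefore gives $\MC_y(X(ws_j)^\circ)=\calT^R_j(\MC_y(X(w)^\circ))$. By \cref{def:MCR},
\[
\pi_*(\MC_y(\mathring{R}_{u,ws_j}))=\pi_*\big(\calT^R_j(\MC_y(X(w)^\circ))\otimes \SMC_y(Y(u)^\circ)\big).
\]
Applying \cref{lem:piandTi} with $\calF=\MC_y(X(w)^\circ)$ and $\calG=\SMC_y(Y(u)^\circ)$ moves the operator onto the second factor:
\[
\pi_*(\MC_y(\mathring{R}_{u,ws_j}))=\pi_*\big(\MC_y(X(w)^\circ)\otimes \calT^{R,\vee}_j(\SMC_y(Y(u)^\circ))\big).
\]
The lemma is stated for classes in $K_T(G/B)[y]$, while SMC classes live in the completion $[[y]]$. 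The projection-formula argument in its proof is $y$-linear coefficientwise, so it extends to $[[y]]$ without change.

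Next we expand $\calT^{R,\vee}_j(\SMC_y(Y(u)^\circ))$ using \eqref{eq:TRdual}, which splits into two cases.

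If $us_j<u$, then $\calT^{R,\vee}_j(\SMC_y(Y(u)^\circ))=\SMC_y(Y(us_j)^\circ)$. The expression becomes $\pi_*\MC_y(\mathring{R}_{us_j,w})$, which equals $\MC_y(\mathring{\Pi}_{us_j,w})$ by \cref{def:MCproR}; this applies because $w\in W^P$.

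If $us_j>u$, then $\calT^{R,\vee}_j(\SMC_y(Y(u)^\circ))=-(1+y)\SMC_y(Y(u)^\circ)-y\SMC_y(Y(us_j)^\circ)$. Tensoring with $\MC_y(X(w)^\circ)$ and pushing forward gives $-(1+y)\MC_y(\mathring{\Pi}_{u,w})-y\MC_y(\mathring{\Pi}_{us_j,w})$.

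When $u\nleq w$ or $us_j\nleq w$, the definitions are still taken as the tensor products, so no separate emptiness argument is needed. The identities hold as identities of classes, consistent with the convention that $\mathring{\Pi}_{u,w}=\varnothing$ when $u\nleq w$.

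I expect no real obstacle. The only points to check are that $ws_j>w$ (immediate from minimality of $w$ in $wW_P$) and that \cref{lem:piandTi} extends to power series in $y$.
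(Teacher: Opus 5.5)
Your proof is correct and follows the paper's argument essentially verbatim: you rewrite $\MC_y(X(ws_j)^\circ)$ as $\calT^R_j(\MC_y(X(w)^\circ))$ using $ws_j>w$, move the operator across $\pi_*$ via \cref{lem:piandTi}, and expand with \eqref{eq:TRdual}. That is the right reference; the paper cites item (4) of \cref{prop:mcprop}, which is the left-operator formula. Your remark that \cref{lem:piandTi} extends coefficientwise to $[[y]]$ also fills in a point the paper leaves implicit.
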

\begin{proof}
    Since $w\in W^P$ and $s_j\in W_P$, $ws_j>w$. By the definition of MC classes of open Richardson and open projected Richardson varieties, \cref{lem:piandTi}, and \cref{prop:mcprop}(4), we get
    \begin{align*}
        &\pi_*(\MC_y(\mathring{R}_{u,ws_j}))\\
        =&\pi_*\bigg(\MC_y(X(ws_j)^\circ)\otimes \SMC_y(Y(u)^\circ)\bigg)\\
        =&\pi_*\bigg(\mathcal{T}_j^R(\MC_y(X(w)^\circ))\otimes \SMC_y(Y(u)^\circ)\bigg)\\
        =&\pi_*\bigg(\MC_y(X(w)^\circ)\otimes\mathcal{T}_j^{R,\vee}(\SMC_y(Y(u)^\circ))\bigg)\\
        =&\begin{cases}
        \pi_*\bigg(\MC_y(X(w)^\circ)\otimes\SMC_y(Y(us_j)^\circ)\bigg) & \textit{ if } us_j<u;\\
        -(1+y)\pi_*\bigg(\MC_y(X(w)^\circ)\otimes\SMC_y(Y(u)^\circ)\bigg)-y\pi_*\bigg(\MC_y(X(w)^\circ)\otimes\SMC_y(Y(us_j)^\circ)\bigg) & \textit{ if } us_j>u
        \end{cases}\\
        =&\begin{cases}
        \pi_*(\MC_y(\mathring{R}_{us_j,w})) & \textit{ if } us_j<u;\\
        -(1+y)\pi_*(\MC_y(\mathring{R}_{u,w}))-y\pi_*(\MC_y(\mathring{R}_{us_j,w})) & \textit{ if } us_j>u
        \end{cases}\\
        =&\begin{cases}
        \MC_y(\mathring{\Pi}_{us_j,w}) & \textit{ if } us_j<u;\\
        -(1+y)\MC_y(\mathring{\Pi}_{u,w})-y\MC_y(\mathring{\Pi}_{us_j,w}) & \textit{ if } us_j>u.
        \end{cases}
    \end{align*}
\end{proof}

We also need the following result of Deodhar.
\begin{lem}\cite[Lemma 2.1]{Deo87}\label{lem:parabolic}
For any simple root $\alpha_i$ and $w\in W^P$, exactly one of the following cases hold:
\begin{enumerate}
\item $s_i w<w$, in which case $s_i w\in W^P$;
\item $s_i w>w$ and $s_i w\in W^P$;
\item $s_i w>w$ and $s_i w=ws_j$ for some simple reflection $s_j\in W_P$. 
\end{enumerate}
\end{lem}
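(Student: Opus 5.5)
The proof is pure Coxeter combinatorics for the three cases of $w\in W^P$ and a simple reflection $s_i$. It is the standard argument via the parabolic factorization $W=W^P\cdot W_P$: every $x\in W$ factors uniquely as $x=x^Px_P$ with $x^P\in W^P$, $x_P\in W_P$, and $\ell(x)=\ell(x^P)+\ell(x_P)$. I would also use that $v\in W^P$ if and only if $vs_j>v$ for all simple $s_j\in W_P$. The three listed cases are mutually exclusive: in case (3) we have $s_iw=ws_j\notin W^P$, because $ws_js_j=w<ws_j$. So it suffices to show that case (1) forces $s_iw\in W^P$, and that if $s_iw>w$ but $s_iw\notin W^P$ then $s_iw=ws_j$ for some $s_j\in W_P$.

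For case (1), suppose $s_iw<w$ but $s_iw\notin W^P$. Then there is a simple $s_j\in W_P$ with $s_iws_j<s_iw$. Set $v=s_iw$, so $\ell(v)=\ell(w)-1$. Since $w\in W^P$, we have $ws_j>w$, hence $\ell(s_ivs_j)=\ell(v)+2$ while $\ell(vs_j)=\ell(v)-1$. But $\ell(s_ivs_j)\le \ell(vs_j)+1=\ell(v)$, a contradiction. This is just the length bookkeeping.

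For case (3), suppose $s_iw>w$ and $s_iw\notin W^P$. Pick a simple $s_j\in W_P$ with $s_iws_j<s_iw$, so $\ell(s_iws_j)=\ell(w)$. Because $ws_j>w$, we have $\ell(ws_j)=\ell(w)+1=\ell(s_iw)$ and $\ell(s_iws_j)=\ell(ws_j)-1$. I would now apply the exchange condition to a reduced word $s_i s_{k_1}\cdots s_{k_m}$ of $s_iw$, where $w=s_{k_1}\cdots s_{k_m}$ is reduced. Right multiplication by $s_j$ shortens this word, so $s_iws_j$ is obtained by deleting one letter. If the deleted letter is some $s_{k_r}$, then $ws_j=s_i(s_iws_j)$ has length at most $1+(\ell(w)-1)$, i.e. $ws_j$ would be obtained from $w$ by deleting a letter, so $ws_j<w$. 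This contradicts $w\in W^P$. Hence the deleted letter is the initial $s_i$, which gives $s_iws_j=w$, i.e. $s_iw=ws_j$.

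The only delicate step is this application of the exchange condition: it needs the reduced word of $s_iw$ to begin with $s_i$, which holds because $s_iw>w$. I expect no further obstacle, and the case analysis is complete since $s_iw<w$ and $s_iw>w$ exhaust all possibilities.
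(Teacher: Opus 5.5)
Your proof is correct and complete. The descent characterization of $W^P$ and the length count give case (1). The exchange condition, applied to the reduced word $s_is_{k_1}\cdots s_{k_m}$ of $s_iw$, gives case (3). Your remark that $s_iws_js_j=s_iw$ forces $s_iw\notin W^P$ in case (3) gives the exclusivity of (2) and (3).

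The paper gives no argument of its own for this lemma; it only cites Deodhar's Lemma 2.1. Your write-up is therefore a self-contained replacement for that citation. It is the standard Coxeter-combinatorial verification.

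One harmless slip is in the last step of case (3). If the deleted letter is $s_{k_r}$, then $ws_j=s_{k_1}\cdots\widehat{s_{k_r}}\cdots s_{k_m}$ directly, so the bound is $\ell(ws_j)\le \ell(w)-1$, not $1+(\ell(w)-1)$. Either bound already contradicts $\ell(ws_j)=\ell(w)+1$.

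The parabolic factorization in your first paragraph is only needed to justify the criterion that $v\in W^P$ if and only if $vs_j>v$ for all simple $s_j\in W_P$. That criterion is all you actually use.
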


Since $G$ acts on $G/P$, we can also define the action of the Weyl group $W$ on $K_T(G/P)[y]$, denoted by $w^L$. Since the projection $\pi$ commutes with the $G$-action, $w^L$ commutes with the pushforward $\pi_*$. Applying $\pi_*$ to the equations in \cref{prop:recforMCB}, and using \cref{lem:parabolic} and \cref{cor:pinotminimal}, we get the following recursion formuale for the MC classes of the open projected Richardson varieties. 
\begin{cor}\label{cor:recMCP}
    For any $u\in W$, $w\in W^P$, and simple root $\alpha_i$, we have
\begin{enumerate}
    \item If $s_iw>w$, $s_iw\in W^P$, and $s_iu>u$,
    \begin{align*}
        &(1+y)e^{-\alpha_i}s_i^L(\MC_y(\mathring{\Pi}_{u,w}))-
    (1+y)\MC_y(\mathring{\Pi}_{u,w})\\=&y(1-e^{-\alpha_i})\cdot s_i^L(\MC_y(\mathring{\Pi}_{s_iu,w}))+(1-e^{-\alpha_i})\cdot \MC_y(\mathring{\Pi}_{u,s_iw}).
    \end{align*}

    \item If $s_iw>w$, $s_iw=ws_j$ for some simple reflection $s_j\in W$, $s_iu>u$, and $us_j<u$,
    \begin{align*}
        &(1+y)e^{-\alpha_i}s_i^L(\MC_y(\mathring{\Pi}_{u,w}))-
    (1+y)\MC_y(\mathring{\Pi}_{u,w})\\=&y(1-e^{-\alpha_i})\cdot s_i^L(\MC_y(\mathring{\Pi}_{s_iu,w}))+(1-e^{-\alpha_i})\cdot \MC_y(\mathring{\Pi}_{us_j,w}).
    \end{align*}

    \item If $s_iw>w$, $s_iw=ws_j$ for some simple reflection $s_j\in W$, $s_iu>u$, and $us_j>u$ 
    \begin{align*}
        &(1+y)e^{-\alpha_i}(s_i^L-\id)(\MC_y(\mathring{\Pi}_{u,w}))\\
    =&y(1-e^{-\alpha_i})\cdot s_i^L(\MC_y(\mathring{\Pi}_{s_iu,w}))-y(1-e^{-\alpha_i})\cdot \MC_y(\mathring{\Pi}_{us_j,w}).
    \end{align*}

    \item If $s_iw>w$, $s_iw\in W^P$, and $s_iu<u$,
    \begin{align*}
        &(1+y)s_i^L(\MC_y(\mathring{\Pi}_{u,w}))-
    (1+y)\MC_y(\mathring{\Pi}_{u,w})\\=&(1-e^{-\alpha_i})\cdot\MC_y(\mathring{\Pi}_{u,s_iw})-(1-e^{-\alpha_i})\cdot s_i^L(\MC_y(\mathring{\Pi}_{s_iu,w})).
    \end{align*}

    \item If $s_iw>w$, $s_iw=ws_j$ for some simple reflection $s_j\in W_P$, $s_iu<u$, and $us_j<u$,
    \begin{align*}
        &(1+y)s_i^L(\MC_y(\mathring{\Pi}_{u,w}))-
    (1+y)\MC_y(\mathring{\Pi}_{u,w})\\=&(1-e^{-\alpha_i})\cdot\MC_y(\mathring{\Pi}_{us_j,w})-(1-e^{-\alpha_i})\cdot s_i^L(\MC_y(\mathring{\Pi}_{s_iu,w}))
    \end{align*}

    \item If $s_iw>w$, $s_iw=ws_j$ for some simple reflection $s_j\in W_P$, $s_iu<u$, and $us_j>u$,
    \begin{align*}
        &(1+y)s_i^L(\MC_y(\mathring{\Pi}_{u,w}))-
    (1+y)e^{-\alpha_i}\MC_y(\mathring{\Pi}_{u,w})\\
    =&-y(1-e^{-\alpha_i})\cdot\MC_y(\mathring{\Pi}_{us_j,w})-(1-e^{-\alpha_i})\cdot s_i^L(\MC_y(\mathring{\Pi}_{s_iu,w})).
    \end{align*}
   
    \item If $s_iw<w$ and $s_iu>u$,
    \begin{align*}
        &(1+y)e^{-\alpha_i}(s_i^L-\id)(\MC_y(\mathring{\Pi}_{u,w}))\\=& y(1-e^{-\alpha_i})\cdot s_i^L(\MC_y(\mathring{\Pi}_{s_iu,w}))-y(1-e^{-\alpha_i})\MC_y(\mathring{\Pi}_{u,s_iw}).
    \end{align*}

    \item If $s_iw<w$ and $s_iu<u$,
    \begin{align*}
        &(1+y)(s_i^L-e^{-\alpha_i})(\MC_y(\mathring{\Pi}_{u,w}))\\=&-(1-e^{-\alpha_i})\cdot s_i^L(\MC_y(\mathring{\Pi}_{s_iu,w}))
    -y(1-e^{-\alpha_i})\MC_y(\mathring{\Pi}_{u,s_iw}).
    \end{align*}
\end{enumerate}
\end{cor}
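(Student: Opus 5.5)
We apply $\pi_*$ to the four identities of \cref{prop:recforMCB} and rewrite each term in terms of projected classes. Two facts make this work. First, $\pi$ is $G$-equivariant, so $\pi_*$ commutes with $s_i^L$. Second, $\pi_*$ is $K_T(\pt)[y]$-linear, so the scalar factors $e^{-\alpha_i}$, $1-e^{-\alpha_i}$ and $1+y$ pass through. Since $w\in W^P$, we have $\pi_*\MC_y(\mathring{R}_{u,w})=\MC_y(\mathring{\Pi}_{u,w})$ and $\pi_*\MC_y(\mathring{R}_{s_iu,w})=\MC_y(\mathring{\Pi}_{s_iu,w})$ by \cref{def:MCproR}. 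The only term needing care is $\pi_*\MC_y(\mathring{R}_{u,s_iw})$, since $s_iw$ need not lie in $W^P$. We split according to \cref{lem:parabolic}.

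If $s_iw<w$, then $s_iw\in W^P$. Cases (3) and (4) of \cref{prop:recforMCB} push forward verbatim and give items (7) and (8).

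If $s_iw>w$ and $s_iw\in W^P$, cases (1) and (2) of \cref{prop:recforMCB} likewise push forward directly and give items (1) and (4).

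The remaining situation is $s_iw>w$ with $s_iw=ws_j$ for some $s_j\in W_P$. We again use cases (1) and (2) of \cref{prop:recforMCB}, but now replace $\pi_*\MC_y(\mathring{R}_{u,ws_j})$ using \cref{cor:pinotminimal}.
\begin{itemize}
\item If $us_j<u$, it equals $\MC_y(\mathring{\Pi}_{us_j,w})$. This gives items (2) and (5) immediately.
\item If $us_j>u$, it equals $-(1+y)\MC_y(\mathring{\Pi}_{u,w})-y\MC_y(\mathring{\Pi}_{us_j,w})$. Substituting this and moving the $(1+y)(1-e^{-\alpha_i})\MC_y(\mathring{\Pi}_{u,w})$ term to the left-hand side produces the remaining two items.
\end{itemize}

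For example, when $s_iu>u$ the left side becomes
\[
(1+y)e^{-\alpha_i}s_i^L(\cdot)-(1+y)(\cdot)+(1+y)(1-e^{-\alpha_i})(\cdot)=(1+y)e^{-\alpha_i}(s_i^L-\id)(\cdot),
\]
which is item (3). When $s_iu<u$, we get $(1+y)s_i^L(\cdot)-(1+y)e^{-\alpha_i}(\cdot)$, which is item (6).

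The only real obstacle is this last bookkeeping: tracking the sign and the $(1+y)$ coefficient when absorbing the extra $\MC_y(\mathring{\Pi}_{u,w})$ term. The rest is a direct transcription.
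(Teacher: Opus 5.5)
Your proposal is correct and is the paper's own argument: apply $\pi_*$ (which commutes with $s_i^L$ and is $K_T(\pt)[y]$-linear) to the four identities of \cref{prop:recforMCB}, split the case $s_iw>w$ by \cref{lem:parabolic}, and rewrite $\pi_*\MC_y(\mathring{R}_{u,ws_j})$ via \cref{cor:pinotminimal}; your absorption of the extra $(1+y)(1-e^{-\alpha_i})\MC_y(\mathring{\Pi}_{u,w})$ term correctly produces items (3) and (6). The only remark is that in items (2) and (3) the hypothesis should read $s_j\in W_P$, as in items (5) and (6), which is exactly the reading you adopted from \cref{lem:parabolic}.
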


Now, let us rewrite the recursion in terms of the extended affine Weyl group $W_{\ext}:=W\ltimes X_*(T)$, where $X_*(T)$ is the cocharacter lattice. 
In the remaining part of this section, let $\lambda$ be a dominant cocharacter whose stabilizer is $W_P$. For any $u,w\in W$, let \[f^{\lambda}_{u,w}:=ut_\lambda w^{-1}\in W_{\ext}.\]
Let us denote $\mathcal{B}^+=\{f^{\lambda}_{u,w}\mid u\in W, w\in W^P\}$. By \cite{HL15}, there is a bijection
\[W\times W^P\rightarrow \mathcal{B}^+, \quad (u,w)\mapsto f^{\lambda}_{u,w}. \]
Moreover, for any $w\in W^P$, $t_\lambda w^{-1}$ is a minimal length representative in the right coset $W\cdot  t_\lambda w^{-1}\subset W_{\ext}$. In particular, for any $(u,w)\in W\times W^P$, 
\begin{equation}\label{equ:leng}
\ell(ut_\lambda w^{-1})=\ell(u)+\ell(t_\lambda)-\ell(w). 
\end{equation} 
Using these, it is easy to get
\begin{lem}\label{lem:fuw}
    Fix $u\in W, w\in W^P$, and let $f=ut_\lambda w^{-1}$. For any simple root $\alpha_i$, we have
    \begin{enumerate}
        \item $s_if<f\Leftrightarrow s_iu<u$.
        \item $s_if>f\Leftrightarrow s_iu>u$.
        \item $fs_i<f\Leftrightarrow w<s_iw\in W^P$, or $s_iw=ws_j$ for some simple reflection $s_j\in W_P$ and $us_j<u$.
        \item $fs_i>f\Leftrightarrow s_iw<w$, or $s_iw=ws_j$ for some simple reflection $s_j\in W_P$ and $us_j>u$.
    \end{enumerate}
    Moreover, if $s_iw=ws_j$ for some simple reflection $s_j\in W_P$, then $us_jt_\lambda w^{-1}=ut_\lambda (s_iw)^{-1}=fs_i$.
\end{lem}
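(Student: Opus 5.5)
The plan is to reduce everything to length additivity. We use the length formula \eqref{equ:leng}, $\ell(ut_\lambda w^{-1})=\ell(u)+\ell(t_\lambda)-\ell(w)$ for $(u,w)\in W\times W^P$, together with the bijection $W\times W^P\to\mathcal{B}^+$ of \cite{HL15}. We also use the fact, stated just before the lemma, that $t_\lambda w^{-1}$ is the minimal length representative of its coset $W\cdot t_\lambda w^{-1}$.

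For (1) and (2), note that $s_if=(s_iu)t_\lambda w^{-1}$ with $(s_iu,w)\in W\times W^P$. Hence $\ell(s_if)=\ell(s_iu)+\ell(t_\lambda)-\ell(w)$, so $\ell(s_if)-\ell(f)=\ell(s_iu)-\ell(u)$. Since in a Coxeter group $s_if<f$ iff $\ell(s_if)<\ell(f)$, both equivalences follow.

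For (3) and (4), write $fs_i=ut_\lambda(s_iw)^{-1}$ and split according to Deodhar's \cref{lem:parabolic}.

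If $s_iw<w$, then $s_iw\in W^P$. Applying \eqref{equ:leng} gives $\ell(fs_i)=\ell(f)+1$, so $fs_i>f$.

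If $s_iw>w$ with $s_iw\in W^P$, the same formula gives $\ell(fs_i)=\ell(f)-1$, so $fs_i<f$.

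If $s_iw=ws_j$ with $s_j\in W_P$, then $(s_iw)^{-1}=s_jw^{-1}$. Since $s_j$ fixes $\lambda$, $s_jt_\lambda=t_\lambda s_j$ (here $t_{s_j\lambda}=s_jt_\lambda s_j$). So $fs_i=ut_\lambda s_jw^{-1}=us_jt_\lambda w^{-1}$, which is the ``moreover'' statement. Since $(us_j,w)\in W\times W^P$, \eqref{equ:leng} gives $\ell(fs_i)-\ell(f)=\ell(us_j)-\ell(u)$. Therefore $fs_i<f$ iff $us_j<u$, and $fs_i>f$ iff $us_j>u$.

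The three cases are exhaustive and mutually exclusive, and exactly one of $fs_i<f$, $fs_i>f$ holds. So the forward implications assembled above give both directions of (3) and (4).

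The only delicate point is the commutation $s_jt_\lambda=t_\lambda s_j$ in $W_\ext=W\ltimes X_*(T)$, together with applying \eqref{equ:leng} to the pair $(us_j,w)$ rather than to $(u,s_iw)$. The latter would be illegitimate, since $s_iw\notin W^P$ in that case. Everything else is bookkeeping with lengths.
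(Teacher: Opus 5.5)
Your proposal is correct and takes essentially the paper's approach. The paper only remarks that the lemma is easy from the length formula \eqref{equ:leng} (via the minimality of $t_\lambda w^{-1}$ in its coset), and your argument supplies the omitted bookkeeping: the trichotomy of \cref{lem:parabolic}, the commutation $s_jt_\lambda=t_\lambda s_j$ for $s_j\in W_P$, and applying \eqref{equ:leng} to $(us_j,w)$ rather than $(u,s_iw)$.
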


\begin{dfn}\label{def:OpenRic}
    For $u\in W, w\in W^P$,  denote by $\mathring{\Pi}_{f^{\lambda}_{u,w}}=\mathring{\Pi}_{u,w}\subset G/P$
     the open projected Richardson variety. Define the SMC class of $\mathring{\Pi}_{f^{\lambda}_{u,w}}$ by \[\SMC_y(\mathring{\Pi}_{f^{\lambda}_{u,w}}):=\frac{\MC_y(\mathring{\Pi}_{f^{\lambda}_{u,w}})}{\lambda_y(T^*(G/P))}\in K_T(G/P)[[y]].\]
\end{dfn}
Since the class $\lambda_y(T^*(G/P))$ is $G$-equivariant, it is invariant under the action $w^L$ for any $w\in W$. Therefore, using \cref{lem:fuw}, we can reformulate \cref{cor:recMCP} in the following way. 
\begin{thm}\label{thm:MCPif}
    Assume $f=f^{\lambda}_{u,w}$ for some $u\in W, w\in W^P$. For any simple root $\alpha_i$, we have
    \begin{enumerate}
        \item If $s_if<f$ and $fs_i<f$, then
        \begin{align*}
        &(1+y)\cdot s_i^L(\SMC_y(\mathring{\Pi}_f))-
    (1+y)\cdot \SMC_y(\mathring{\Pi}_f)\\=&(1-e^{-\alpha_i})\cdot\SMC_y(\mathring{\Pi}_{fs_i})-(1-e^{-\alpha_i})\cdot s_i^L(\SMC_y(\mathring{\Pi}_{s_if})).
    \end{align*}
    \item If $s_if<f$ and $fs_i>f$, then
    \begin{align*}
        &(1+y)\cdot s_i^L(\SMC_y(\mathring{\Pi}_f))-(1+y)e^{-\alpha_i}\cdot \SMC_y(\mathring{\Pi}_f)\\=&-(1-e^{-\alpha_i})\cdot s_i^L(\SMC_y(\mathring{\Pi}_{s_if}))
    -y(1-e^{-\alpha_i})\cdot \SMC_y(\mathring{\Pi}_{fs_i}).
    \end{align*}
    \item If $s_if>f$ and $fs_i<f$, then
    \begin{align*}
        &(1+y)e^{-\alpha_i}\cdot s_i^L(\SMC_y(\mathring{\Pi}_{f}))-
    (1+y)\cdot \SMC_y(\mathring{\Pi}_{f})\\=&y(1-e^{-\alpha_i})\cdot s_i^L(\SMC_y(\mathring{\Pi}_{s_if}))+(1-e^{-\alpha_i})\cdot \SMC_y(\mathring{\Pi}_{fs_i}).
    \end{align*}
    
    \item If $s_if>f$ and $fs_i>f$, then
    \begin{align*}
        &(1+y)e^{-\alpha_i}\cdot s_i^L(\SMC_y(\mathring{\Pi}_{f}))-(1+y)e^{-\alpha_i}\cdot (\SMC_y(\mathring{\Pi}_{f}))\\
    =&y(1-e^{-\alpha_i})\cdot s_i^L(\SMC_y(\mathring{\Pi}_{s_if}))-y(1-e^{-\alpha_i})\cdot \SMC_y(\mathring{\Pi}_{fs_i}).
    \end{align*}
    \end{enumerate}
\end{thm}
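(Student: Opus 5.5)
The plan is to derive \cref{thm:MCPif} directly from \cref{cor:recMCP}. First we convert the MC classes there into SMC classes, and then we translate the case conditions using \cref{lem:fuw}.

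For the first step, divide every identity of \cref{cor:recMCP} by $\lambda_y(T^*(G/P))$. This class is $G$-equivariant, hence fixed by $s_i^L$, so $s_i^L(\MC_y(\cdot))/\lambda_y(T^*(G/P))=s_i^L(\SMC_y(\cdot))$. Multiplication by scalars in $K_T(\pt)[y]$ also commutes with the division. Each identity of \cref{cor:recMCP} therefore becomes the same identity with $\MC_y$ replaced by $\SMC_y$ throughout.

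Next, write $f=f^\lambda_{u,w}$. By the last statement of \cref{lem:fuw}, whenever $s_iw=ws_j$ with $s_j\in W_P$ we have $\mathring{\Pi}_{us_j,w}=\mathring{\Pi}_{fs_i}$. When $s_iw\in W^P$ we have $\mathring{\Pi}_{u,s_iw}=\mathring{\Pi}_{ut_\lambda w^{-1}s_i}=\mathring{\Pi}_{fs_i}$. In all cases $\mathring{\Pi}_{s_iu,w}=\mathring{\Pi}_{s_if}$. Here I will use that the map $(u,w)\mapsto f^\lambda_{u,w}$ is a bijection from $W\times W^P$ onto $\mathcal{B}^+$, so the labels are well defined. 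This also requires that $fs_i$ again lies in $\mathcal{B}^+$ in the relevant cases, which holds because $s_iw\in W^P$ or $us_j\in W$.

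Then I sort the eight cases of \cref{cor:recMCP} by the four sign patterns, using \cref{lem:fuw}(1)--(4):
\begin{itemize}
\item $s_if<f,\ fs_i<f$: cases (4) and (5). They give the same formula, matching (1) of the theorem.
\item $s_if<f,\ fs_i>f$: cases (8) and (6). Case (8) has $s_iw<w$ and gives (2) directly. Case (6) has $s_iw=ws_j$ with $us_j>u$, and is already written in the form of (2).
\item $s_if>f,\ fs_i<f$: cases (1) and (2), giving (3).
\item $s_if>f,\ fs_i>f$: cases (7) and (3), giving (4).
\end{itemize}
Each pair yields literally the same identity once the substitutions $\mathring{\Pi}_{u,s_iw}$ or $\mathring{\Pi}_{us_j,w}\mapsto\mathring{\Pi}_{fs_i}$ are made.

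The main obstacle is bookkeeping rather than any new idea. I must check that \cref{lem:parabolic} exhausts all possibilities, so that every case of the theorem is covered. I also need the degenerate situations, where $u\nleq w$ or the new pair fails $u\le w$, to be consistent with the convention $\mathring{\Pi}=\varnothing$ and zero class. Finally, the hypothesis "$s_j\in W$" in cases (2) and (3) of \cref{cor:recMCP} should be read as $s_j\in W_P$, as in \cref{lem:parabolic}. If case (6) or (3) did not already match the displayed form, I would rederive it from \cref{prop:recforMCB} together with \cref{cor:pinotminimal}, rearranging terms.
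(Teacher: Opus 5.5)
Your proposal is correct and is essentially the paper's proof. Like the paper, you divide the identities of \cref{cor:recMCP} by the $W$-invariant class $\lambda_y(T^*(G/P))$ and regroup the eight cases into the four sign patterns via \cref{lem:fuw}, using the same pairing of cases: (4),(5); (8),(6); (1),(2); (7),(3). Your concern about degenerate pairs is unnecessary: the classes are defined by $\pi_*\bigl(\MC_y(X(w)^\circ)\otimes\SMC_y(Y(u)^\circ)\bigr)$ for all $u\in W$, $w\in W^P$, so the identities hold whether or not $u\le w$.
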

\begin{proof}
    Let us only prove the first one, as the others can be checked similarly. By \cref{lem:fuw}, the case $s_if<f$ and $fs_i>f$ corresponds to the cases (4) and (5) in \cref{cor:recMCP}, both of which give the same recursion:
    \begin{align*}
        &(1+y)\cdot s_i^L(\SMC_y(\mathring{\Pi}_f))-
    (1+y)\cdot \SMC_y(\mathring{\Pi}_f)\\=&(1-e^{-\alpha_i})\cdot\SMC_y(\mathring{\Pi}_{fs_i})-(1-e^{-\alpha_i})\cdot s_i^L(\SMC_y(\mathring{\Pi}_{s_if})).
    \end{align*}
    %%%%% (2) corresponds to cases 6 and 8; (3) corresponds to 1 and 2, while (4) corresponds to 3 and 7.
\end{proof}

\section{SMC classes in the affine flag variety}
In this section, we study the motivic Chern classes in the affine flag variety. 

\subsection{Affine flag variety}
Let $X_*(T)$ be the cocharacter lattice of $T$, and let 
$$W_{\ext}:=W\ltimes X_*(T)$$ be the extended affine Weyl group.
For a cocharacter $\lambda\in X_*(T)$, we denote by $t_\lambda$ the corresponding element in $W_{\ext}$. Note that in $W_{\ext}$, we have 
$$wt_\lambda w^{-1}=t_{w\lambda},\qquad w\in W,\lambda\in X_*(T).$$
Denote the coroot lattice as $Q^\vee\subseteq X_*(T)$. 
It is known that the subgroup $W_\aff:=W\ltimes Q^\vee$, called the affine Weyl group, is the Coxeter group of the corresponding affine Dynkin diagram with generators
$$s_i\in W\, (i\in I),\quad\text{ and \quad} s_0=t_{\theta^\vee}s_\theta,
$$
where $\theta$ is the highest root. 
Let $R$ be the set of roots, with positive (resp. negative) roots denoted by $R^+$ (resp., $R^-$). For any $\alpha\in R$, we use $\alpha>0$ to denote that $\alpha\in R^+$.
The length function on $W_\aff$ can be extended to $W_{\ext}$:
$$\ell(wt_\lambda)
=
\sum_{\alpha>0,w\alpha>0}\big|\langle \alpha,\lambda\rangle\big|+
\sum_{\alpha>0,w\alpha<0}\big|\langle \alpha,\lambda\rangle+1\big|.
$$ 
Let $\Omega\subset W_{\ext}$ be the set of length 0 elements. Then $W_{\ext} =W_\aff\rtimes \Omega$, and we can also extend the Bruhat order on $W_\aff$ to $W_\ext$ by $w\gamma\leq w'\gamma'$ if and only if $w\leq w'$ and $\gamma=\gamma'$, where $w,w'\in W_\aff$ and  $\gamma,\gamma'\in \Omega$. With this length function and Bruhat order, all the constructions in \cref{sec:KLRpoly} can be extended to $W_\ext$.

Let $\mathbb{C}[[z]]$ (resp. $\mathbb{C}(\!(z)\!):=\mathbb{C}[[z]][z^{-1}]$) be the formal power series ring (resp. formal Laurent series ring), and let $G[[z]]$ (resp. $G(\!(z)\!)$)  be the $\mathbb{C}[[z]]$-points (resp. $\mathbb{C}(\!(z)\!)$-points) of $G$. There is an evaluation at $z=0$ map from $G[[z]]$ to $G$, and let $\mathcal{I}$ (resp. $\calI^-$) be the inverse image of the Borel subgroup $B$ (resp., the opposite Borel subgroup $B^-$).  Then the \emph{affine flag variety} is
$$\Fl_G = G(\!(z)\!)/\mathcal{I},$$
whose $T$-fixed points
$(\Fl_G)^T$ can be identified with  $W_{\ext}$ as follows. Any cocharacter $\lambda\in X_*(T)$ defines a morphism $\mathbb{C}(\!(z)\!)^*\rightarrow T(\!(z)\!)\subset G(\!(z)\!)$, we use $z^\lambda\in G(\!(z)\!)$ to denote the image of $z$. For any $w\in W$, let $\dot{w}$ denote a lift of it in $G[[z]]$. Then for each $wt_\lambda\in W_{\ext}$, the corresponding fixed point in $\Fl_G$ is $\dot{w}z^{-\lambda}\mathcal{I}\in \Fl_G$, which will be just denoted by $wt_\lambda$ throughout the paper. Let $\mathring{\Sigma}_{wt_\lambda}:=\mathcal{I}\dot{w}z^{-\lambda}\mathcal{I}/\mathcal{I}$ be the {\it affine Schubert cell} of dimension $\ell(wt_\lambda)$. The  affine flag variety has a cell decomposition 
\[\Fl_G=\bigsqcup_{wt_\lambda\in W_{\ext}}\mathring{\Sigma}_{wt_\lambda}.\]
For each $n\geq 0$, let 
\begin{equation}\label{equ:ind}
    X_n:=\bigsqcup_{wt_\lambda\in W_{\ext},\,  \ell(wt_\lambda)\leq n}\mathring{\Sigma}_{wt_\lambda}. 
\end{equation}
Then $\Fl_G$ is the increasing limit of $X_n$, which is called an ind-variety. Let   $\mathring{\Sigma}^{wt_\la}:=\calI^-\dot{w}z^{-\lambda}\calI/\calI$ be the opposite affine Schubert cell.

There is a loop rotation action of $\mathbb{C}^*_{\rot}:=\mathbb{C}^*$ on $\Fl_G$ by scaling the parameter $z$. Let $\delta$ be the degree one character of this action, which is also the imaginary root for the corresponding Kac--Moody Lie algebra $\hat{\mathfrak{g}}$. Then the positive (real)  affine roots are 
\begin{equation}\label{equ:affpos}
    R^+_{\aff}=\{\alpha+k\delta\mid \alpha\in R^+, k\geq 0 \text{ or } \alpha\in R^-, k\geq 1\},
\end{equation}
and the negative (real) affine roots are 
\[R^-_{\aff}=\{\alpha+k\delta\mid \alpha\in R^+, k\leq -1 \text{ or } \alpha\in R^-, k\leq 0\}.\]
The extended affine Weyl group $W_{\ext}$ acts on the lattice $X^*(T)\oplus\mathbb{Z}\delta$ by the following formula (see \cite[(3.1)]{LS10})
\begin{equation}\label{equ:affact}
    wt_\lambda(\mu+k\delta)=w(\mu)+(k-\langle{\lambda, \mu\rangle})\delta, ~\mu\in X^*(T). 
\end{equation}
 We need the following lemma computing the torus weights of the tangent space, see \cite{FGSX} for a proof.
\begin{lem}\label{lem:tangentwt}
    For any dominant cocharacter $\lambda$, the torus $T\times \mathbb{C}^*_{\rot}$ weights of the tangent space $T_{t_\lambda}(\mathring{\Sigma}_{t_\lambda})$ are 
    \[\{\alpha+k\delta\mid \alpha\in R^-, 1\leq k\leq -\langle\lambda,\alpha\rangle\}.\]
\end{lem}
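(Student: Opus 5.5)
The tangent space $T_{t_\lambda}\mathring{\Sigma}_{t_\lambda}$ is the $T\times\mathbb{C}^*_{\rot}$-submodule of $T_{t_\lambda}\Fl_G\cong \hat{\mathfrak g}/\operatorname{Ad}(z^{-\lambda})\mathrm{Lie}(\calI)$ obtained by projecting $\mathrm{Lie}(\calI)$. So its weights are the positive affine real roots $\beta$ with $t_\lambda^{-1}(\beta)$ negative. Equivalently, they are the roots $\beta\in R^+_\aff$ whose root space is not contained in the isotropy algebra $\operatorname{Ad}(z^{-\lambda})\mathrm{Lie}(\calI)$. Since $\mathring{\Sigma}_{t_\lambda}\cong \calI/(\calI\cap z^{-\lambda}\calI z^{\lambda})$ is an affine space, these weights are exactly the characters of the quotient $\mathrm{Lie}(\calI)/(\mathrm{Lie}(\calI)\cap \operatorname{Ad}(z^{-\lambda})\mathrm{Lie}(\calI))$. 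The plan is to compute this quotient root by root, using the action \eqref{equ:affact} and the description \eqref{equ:affpos} of $R^+_\aff$.

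\textbf{Step 1.} Write $\mathrm{Lie}(\calI)=\mathfrak t\oplus\bigoplus_{\beta\in R^+_\aff}\hat{\mathfrak g}_\beta$, with $\hat{\mathfrak g}_{\alpha+k\delta}=\mathfrak g_\alpha z^k$. Conjugation by $z^{-\lambda}$ sends $\mathfrak g_\alpha z^k$ to $\mathfrak g_\alpha z^{k-\langle\lambda,\alpha\rangle}$. By \eqref{equ:affact} this is exactly the action of $t_\lambda$ on affine roots, so $\operatorname{Ad}(z^{-\lambda})\mathrm{Lie}(\calI)$ has root set $t_\lambda(R^+_\aff)$. One should also check that the torus weights at the fixed point $t_\lambda=z^{-\lambda}\calI$ are read off from this model, and that the sign convention for $\delta$ matches the rotation action. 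This bookkeeping of signs is the only real obstacle; I would fix it by verifying a single rank-one case, e.g. $G=SL_2$ with $\lambda=\alpha^\vee$.

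\textbf{Step 2.} Determine which $\beta=\alpha+k\delta\in R^+_\aff$ fail to lie in $t_\lambda(R^+_\aff)$, i.e. for which $t_\lambda^{-1}\beta=\alpha+(k+\langle\lambda,\alpha\rangle)\delta$ is a negative root. Split into two cases.

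If $\alpha\in R^+$, then $k\ge0$ and $\langle\lambda,\alpha\rangle\ge0$ because $\lambda$ is dominant. Hence $k+\langle\lambda,\alpha\rangle\ge0$ and $t_\lambda^{-1}\beta$ is positive, so $\beta$ contributes nothing.

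If $\alpha\in R^-$, then $k\ge1$, and $t_\lambda^{-1}\beta$ is negative exactly when $k+\langle\lambda,\alpha\rangle\le0$, that is, $k\le-\langle\lambda,\alpha\rangle$.

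The Cartan part $\mathfrak t$ is common to both algebras and contributes nothing. Therefore the weights are precisely $\{\alpha+k\delta\mid \alpha\in R^-,\ 1\le k\le-\langle\lambda,\alpha\rangle\}$, each with multiplicity one.

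\textbf{Step 3.} As a consistency check, count the weights. The total is $\sum_{\alpha>0}\langle\lambda,\alpha\rangle$, which equals $\ell(t_\lambda)$ by the length formula, since for $w=\id$ only the first sum appears. This matches $\dim\mathring{\Sigma}_{t_\lambda}=\ell(t_\lambda)$. If the signs in Step 1 had come out reversed, this count would still agree, so the $SL_2$ check in Step 1 is what actually pins down the orientation $\alpha\in R^-$, $k\ge1$.
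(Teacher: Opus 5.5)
Your argument is correct, and it is the standard inversion-set computation that the paper does not carry out itself but defers to \cite{FGSX}: the tangent weights of $\mathring{\Sigma}_{t_\lambda}$ at $t_\lambda$ are the positive affine roots $\beta$ with $t_\lambda^{-1}\beta<0$, and \eqref{equ:affact} together with the dominance of $\lambda$ cuts this set down to the one stated. Two cosmetic points. First, $\mathrm{Lie}(\calI)$ also contains the imaginary root spaces $\mathfrak t z^k$ for $k\ge 1$; these are fixed by $\operatorname{Ad}(z^{-\lambda})$ and therefore cancel, just as $\mathfrak t$ does. Second, no $SL_2$ check is needed, because the paper's conventions already settle both signs: \eqref{equ:affpos} lists the weights of $\mathrm{Lie}(\calI)$ and $\delta$ is the weight of $z$, so $\mathfrak g_\alpha z^k$ has weight $\alpha+k\delta$, while the choice of fixed point $z^{-\lambda}\calI$, which you already use, is what forces $\alpha\in R^-$.
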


\subsection{MC classes in the affine flag variety}
Recall $T\subset G$ is the maximal torus in $G$. We consider the $T$-equivariant K-homology group of $\Fl_G$, which is defined by 
\[K^T(\Fl_G):=\lim_{n\rightarrow\infty}K^T(X_n)\]
using the ind-scheme structure \eqref{equ:ind}.

On the other hand, $\Fl_G$ can be realized as a Kac--Moody flag variety $\hat{G}/\hat{B}$ for the corresponding affine Kac-Moody group $\hat{G}$, which is an ind-finite ind-variety with a stratification by the finite-dimensional Schubert cells \cite{Ku02}. The Weyl group of the affine Kac--Moody group $\hat{G}$ is $W_\ext$. Hence, as in \cref{sec:MCSchu}, we have the operators $w^L$ and $w^R$ on $K_T(\Fl_G)$ for any $w\in W_\ext$. Let $I_{\aff}:=I\sqcup\{0\}$ be the vertices of the affine Dynkin diagram. Then the operator $\mathcal{T}_i^L, \mathcal{T}_i^{L,\vee}, \mathcal{T}_i^R, \mathcal{T}_i^{R,\vee}$ for $i\in I_{\aff}$ in \cref{equ:lDL,equ:TiTivee} can also be defined for $K^T(\Fl_G)$, with $\alpha_0=-\theta$
as we only consider the small torus $T$ instead of the affine torus in $\hat{G}$, where $\theta$ is the highest root. These operators satisfy the same relations as before.

Recall that for any $f\in W_\ext$, $\mathring{\Sigma}_{f}\subset \Fl_G$ denotes the finite dimensional affine Schubert cell. The MC class $\MC_y(\mathring{\Sigma}_{f})\in K^T(\Fl_G)[y]$ is well defined as $\Fl_G$ is an ind-scheme, and they form a basis for the localized equivariant K-homology group. Since the Schubert variety $\Sigma_f:=\overline{\mathring{\Sigma}_{f}}$ also has a Bott--Samelson resolution as in the finite type case (see \cite{Ku02}), the proof of \cite[Theorem 1.1]{AMSS24} also works for the affine flag variety $\Fl_G$. Hence, for any $i\in I_{\aff}$, we have the following formula similar to \eqref{eq:TR}:
\begin{equation}\label{equ:TiRaff}    \mathcal{T}_i^R(\MC_y(\mathring{\Sigma}_{f}))=\begin{cases} \MC_y(\mathring{\Sigma}_{fs_i}) &\textit{ if } fs_i>f;\\
     -(1+y)\MC_y(\mathring{\Sigma}_{f})-y \MC_y(\mathring{\Sigma}_{fs_i}) &\textit{ if } fs_i<f. \end{cases}
\end{equation}

Moreover, as the proof in \cite[Theorem 7.6]{MNS} only depends on the above formula and a computation for $\mathbb{P}^1$, we get the affine version of \eqref{eq:TL}:
\begin{equation}\label{equ:TiLaff}
    \mathcal{T}_i^L(\MC_y(\mathring{\Sigma}_{f}))=\begin{cases}
    \MC_y(\mathring{\Sigma}_{s_if}), &\textit{ if } s_if>f;\\
    -(1+y)\MC_y(\mathring{\Sigma}_{f})-y \MC_y(\mathring{\Sigma}_{s_if}) &\textit{ if } s_if<f. 
\end{cases}
\end{equation}

Recall that the $T$-equivariant K-cohomology of the affine flag variety $K_T(\Fl_G)$ can be identified with $\Hom_{K^T(\pt)}(K^T(\Fl_G),K^T(\pt))$, see \cite{LSS10}. Hence, there is a nondegenerate pairing
\[\langle-,-\rangle:K^T(\Fl_G)\times K_T(\Fl_G)\rightarrow K_T(\pt).\]
As in the finite type case in \cref{prop:mcprop}, we define the classes $\widetilde{\MC}_y(\mathring{\Sigma}^{f})\in K_T(\Fl_G)[[y]]$  associated with the opposite Schubert cells $\mathring{\Sigma}^{f}$ to be the dual basis of the MC classes:
\begin{equation}\label{equ:defdual}
    \langle \MC_y(\mathring{\Sigma} _f), \widetilde{\MC}_y(\mathring{\Sigma}^{g})\rangle=\delta_{f,g}, ~f,g\in W_\ext. 
\end{equation}
\begin{rem}
    To be more precise, we need to consider the thick affine flag variety $\widetilde{\Fl}_G$, and the SM class is defined in the K-cohomology ring $K_T(\widetilde{\Fl}_G)$, which is isomorphic to $\Hom_{K^T(\pt)}(K^T(\Fl_G),K^T(\pt))$, see \cite{LSS10}.
\end{rem}
By the similar calculations as in \cite{MNS}, we can show that for any $\gamma_1\in K^T(\Fl_G)$ and $\gamma_2\in K_T(\Fl_G)$
\begin{equation}\label{equ:adj}
    \langle \mathcal{T}_i^{R}(\gamma_1), \gamma_2\rangle = \langle\gamma_1, \mathcal{T}_i^{R,\vee}(\gamma_2)\rangle, \text{\quad and \quad} \langle \mathcal{T}_i^{L}(\gamma_1), \gamma_2\rangle = s_i\cdot \langle\gamma_1, \mathcal{T}_i^{L,\vee}(\gamma_2)\rangle.
\end{equation}
Combining \cref{equ:TiRaff,equ:TiLaff,equ:defdual,equ:adj}, we get
\begin{align}\label{equ:TiLveeaff}
     \mathcal{T}_i^{L,\vee} (\widetilde{\MC}_y(\mathring{\Sigma}^{f}))&=\begin{cases}
        -y\widetilde{\MC}_y(\mathring{\Sigma}^{s_if}) &\qquad \textit{ if } s_if>f ;\\
        \widetilde{\MC}_y(\mathring{\Sigma}^{s_if})-(1+y)\widetilde{\MC}_y(\mathring{\Sigma}^{f}) &\qquad \textit{ if } s_if<f. 
        \end{cases}\\
    \mathcal{T}_i^{R,\vee} (\widetilde{\MC}_y(\mathring{\Sigma}^{f}))&=\begin{cases}
        -y\widetilde{\MC}_y(\mathring{\Sigma}^{fs_i}) &\qquad \textit{ if } fs_i>f ;\\
        \widetilde{\MC}_y(\mathring{\Sigma}^{fs_i})-(1+y)\widetilde{\MC}_y(\mathring{\Sigma}^{f}) &\qquad \textit{ if } fs_i<f. 
    \end{cases}
\end{align}

Motivated by \cref{prop:smc}, we define the Segre motivic Chern class of the opposite affine Schubert cell $\mathring{\Sigma}^{f}$ as follows.
\begin{dfn}\label{def:SMCaff}
    For any $w\in W$, let
    \[\SMC_y(\mathring{\Sigma}^{f})=\sum_{g\in W_\ext} R_{f,g}(-y)\cdot \widetilde{\MC}_y(\mathring{\Sigma}^{g})\in K_T(\Fl_G)[[y]],\]
    where $R_{f,g}(-y)$ is the R-polynomial for the Hecke algebra (with $y=-q$) associated with the extended affine Weyl group $W_\ext$.
\end{dfn}
\begin{rem}
    Since $R_{f,g}(-y)=0$ if $f\nleq g$, the above is a summation over $g$ satisfying $f\leq g$. Although it is an infinite sum, the value of it on $\MC_y(\mathring{\Sigma} _g)$ is $R_{f,g}(-y)$. Hence, it is a well-defined class in $K_T(\Fl_G)[[y]]$.
\end{rem}
We have the following recursive formula for these classes.
\begin{thm}\label{thm:SMCrecaff}
    For any simple root $\alpha_i$ with $i\in I_\aff$, and $f\in W_\ext$,
\begin{align}\mathcal{T}^{L,\vee}_i(\SMC_y(\mathring{\Sigma}^{f}))&=\begin{cases}
           -(1+y)\cdot \SMC_y(\mathring{\Sigma}^{f})-y\cdot\SMC_y(\mathring{\Sigma}^{s_if})&\textit{ if } s_if>f;\\
           \SMC_y(\mathring{\Sigma}^{s_if}) &\textit{ if } s_if<f,
       \end{cases}\\
       \calT_i^{R,\vee}(\SMC_y(\mathring{\Sigma}^{f}))&=\begin{cases}
         \SMC_y(\mathring{\Sigma}^{fs_i}) & \textit{ if } fs_i<f;\\
         -(1+y)\cdot \SMC_y(\mathring{\Sigma}^{f})-y\cdot \SMC_y(\mathring{\Sigma}^{fs_i}) & \textit{ if } fs_i>f.
     \end{cases}\end{align}
\end{thm}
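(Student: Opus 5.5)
The plan is to avoid manipulating the infinite sum in \cref{def:SMCaff} directly. Instead I would characterize each class by its pairings with the MC classes of Schubert cells. By \cref{equ:defdual} and the remark after \cref{def:SMCaff}, a class in $K_T(\Fl_G)[[y]]$ is determined by its values $\langle \MC_y(\mathring{\Sigma}_g),-\rangle$ for $g\in W_\ext$, and
\[\langle \MC_y(\mathring{\Sigma}_g),\SMC_y(\mathring{\Sigma}^f)\rangle=R_{f,g}(-y).\]
So both identities reduce to equalities between pairings against $\MC_y(\mathring{\Sigma}_g)$. To compute $\langle \MC_y(\mathring{\Sigma}_g),\mathcal{T}_i^{R,\vee}\SMC_y(\mathring{\Sigma}^f)\rangle$, I would move the operator to the left using the adjunction \cref{equ:adj}, then evaluate $\mathcal{T}_i^R\MC_y(\mathring{\Sigma}_g)$ by \cref{equ:TiRaff}. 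This gives
\[R_{f,gs_i}(-y)\ \text{if } gs_i>g,\qquad -(1+y)R_{f,g}(-y)-yR_{f,gs_i}(-y)\ \text{if } gs_i<g.\]
The right-hand side of the claimed formula pairs to $R_{fs_i,g}(-y)$ if $fs_i<f$, and to $-(1+y)R_{f,g}(-y)-yR_{fs_i,g}(-y)$ if $fs_i>f$.

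It then remains to check four identities of R-polynomials with $q=-y$. For this I need the right-handed version of \cref{equ:recR}, namely
\[R_{u,w}=R_{us,ws}\ \text{if } us<u,\ ws<w;\qquad R_{u,w}=(q-1)R_{us,w}+qR_{us,ws}\ \text{if } us>u,\ ws<w.\]
I would obtain it from \cref{equ:recR} and the symmetry $R_{u,w}=R_{u^{-1},w^{-1}}$. That symmetry follows from the anti-involution $T_w\mapsto T_{w^{-1}}$ of $H(W_\ext)$, which commutes with the bar involution. The extension to $W_\ext=W_\aff\rtimes\Omega$ is harmless, because $\Omega$ acts by length-preserving diagram automorphisms. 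The four cases go as follows.
\begin{itemize}
\item $fs_i<f$, $gs_i>g$: apply the first rule to $(f,gs_i)$, giving $R_{f,gs_i}=R_{fs_i,g}$.
\item $fs_i<f$, $gs_i<g$: apply the second rule to $(fs_i,g)$, giving $R_{fs_i,g}=(q-1)R_{f,g}+qR_{f,gs_i}$, which matches since $q=-y$.
\item $fs_i>f$, $gs_i>g$: apply the second rule to $(f,gs_i)$, giving $R_{f,gs_i}=(q-1)R_{f,g}+qR_{fs_i,g}$.
\item $fs_i>f$, $gs_i<g$: apply the first rule to $(fs_i,g)$, giving $R_{fs_i,g}=R_{f,gs_i}$, so the $-(1+y)R_{f,g}$ terms match and the remaining terms agree.
\end{itemize}
The convention $R_{u,w}=0$ for $u\nleq w$ is compatible with these recursions, since they hold identically in $H(W_\ext)$.

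The left identity follows the same pattern. I would use the second adjunction in \cref{equ:adj}, $\langle \mathcal{T}_i^L\gamma_1,\gamma_2\rangle=s_i\cdot\langle\gamma_1,\mathcal{T}_i^{L,\vee}\gamma_2\rangle$, together with \cref{equ:TiLaff} and the left recursion \cref{equ:recR} itself. The twist by $s_i$ causes no trouble, because all pairings involved lie in $\mathbb{Z}[y]$, which $s_i$ fixes. Hence $s_i$ can be applied to both sides freely, and the resulting four case checks mirror the ones above with left multiplication.

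The main obstacle is justifying that the adjunction \cref{equ:adj} and the determination-by-pairings principle apply to classes such as $\SMC_y(\mathring{\Sigma}^f)$, which are infinite sums living in the completed or thick setting. One also needs the non-$K_T(\pt)$-linear operator $\mathcal{T}_i^{L,\vee}$ to be compatible with these sums. I would handle this by working in $\Hom_{K^T(\pt)}(K^T(\Fl_G),K^T(\pt))[[y]]$ as in \cite{LSS10}, and by defining $\mathcal{T}_i^{R,\vee}$ and $\mathcal{T}_i^{L,\vee}$ there as the (twisted) transposes of $\mathcal{T}_i^R$ and $\mathcal{T}_i^L$ via \cref{equ:adj}. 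With that definition, the pairing computation above is the whole proof.
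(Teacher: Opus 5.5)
Your proposal is correct and is essentially the paper's argument, read through the pairing. The paper expands $\SMC_y(\mathring{\Sigma}^{f})$ in the dual basis, applies $\mathcal{T}_i^{L,\vee}$ termwise via \cref{equ:TiLveeaff} (itself derived from \cref{equ:adj} and \cref{equ:TiLaff}, exactly as you use them), reindexes, and finishes with \cref{equ:recR}, so its coefficients of $\widetilde{\MC}_y(\mathring{\Sigma}^{g})$ are precisely your pairings with $\MC_y(\mathring{\Sigma}_g)$.

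The differences are minor. The paper proves only the case $s_if<f$, gets the other case from the quadratic relation, and leaves the right-hand identity as ``similar''; you check all four cases and supply the right-handed recursion via $R_{u,w}=R_{u^{-1},w^{-1}}$. One small slip: in your third case, the rule in the form you stated gives $(q-1)R_{fs_i,gs_i}+qR_{fs_i,g}$, so you also need $R_{fs_i,gs_i}=R_{f,g}$ from the first rule.
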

\begin{proof}
    Let us prove the first one, as the second one can be checked similarly. By the quadratic relation $(\mathcal{T}^{L,\vee}_i+1)(\mathcal{T}^{L,\vee}_i+y)=0$, it is enough to prove the case $s_if<f$, which we now assume. Then by \cref{equ:TiLveeaff}, we get
    \begin{align*}
        &\mathcal{T}^{L,\vee}_i(\SMC_y(\mathring{\Sigma}^{f}))\\
        =&\sum_{s_ig>g} -y\cdot R_{f,g}(-y)\cdot \widetilde{\MC}_y(\mathring{\Sigma}^{s_ig})+\sum_{s_ig<g} R_{f,g}(-y)\bigg(\widetilde{\MC}_y(\mathring{\Sigma}^{s_ig})-(1+y)\widetilde{\MC}_y(\mathring{\Sigma}^{g})\bigg)\\
        =&\sum_{g>s_ig} \bigg(-y\cdot R_{f,s_ig}(-y)-(1+y)\cdot R_{f,g}(-y)\bigg) \cdot\widetilde{\MC}_y(\mathring{\Sigma}^{g})+\sum_{g<s_ig} R_{f,s_ig}(-y)\cdot\widetilde{\MC}_y(\mathring{\Sigma}^{g})\\
        =&\sum_{g>s_ig} R_{s_if,g}(-y)\cdot \widetilde{\MC}_y(\mathring{\Sigma}^{g})+\sum_{g<s_ig} R_{s_if,g}(-y)\cdot \widetilde{\MC}_y(\mathring{\Sigma}^{g})\\
        =&\SMC_y(\mathring{\Sigma}^{s_if}),
    \end{align*}
    where the third equality follows from \cref{equ:recR}.
\end{proof}

The map $W_\ext\ni g\mapsto gI\in\Fl_G$ defines a bijection between the torus fixed points in $\Fl_G$ and $W_\ext$. For any $g\in W_\ext$, we let $[\mathcal{O}_g]\in K^T(\Fl_G)$ denote the class of the structure sheaf of the fixed point $gI\in \Fl_G$. Then $\{[\mathcal{O}_g]\mid g\in W_\ext\}$ forms a basis for the localized group $K^T(\Fl_G)\otimes_{K_T(\pt)}\Frac K_T(\pt)$, where $\Frac K_T(\pt)$ is the fraction field of $K_T(\pt)$. 

For any $\gamma\in K_T(\Fl_G)$, we have the localization at the fixed point $gI$ defined by
\[\gamma|_{g}:=\langle [\mathcal{O}_g],\gamma\rangle\in K_T(\pt). \]
Localizing both sides of the equations in \cref{thm:SMCrecaff} at the torus fixed point $gI$, we get the following result. 
\begin{cor}\label{cor:recSMCaff}
    For any simple root $\alpha_i\in I_\aff$, and $g,f\in W_\ext$, we have 
    \begin{align*}
        &(1+ye^{-\alpha_i})\SMC_y(\mathring{\Sigma}^{f})|_{s_ig}\\
    =&\begin{cases}
           -y(1-e^{-\alpha_i})\cdot s_i\bigg(\SMC_y(\mathring{\Sigma}^{s_if})|_g\bigg)+ (1+y)e^{-\alpha_i}\cdot s_i\bigg(\SMC_y(\mathring{\Sigma}^{f})|_{g}\bigg)&\textit{ if } s_if>f;\\
           (1-e^{-\alpha_i})\cdot s_i\bigg(\SMC_y(\mathring{\Sigma}^{s_if})|_g\bigg)+ (1+y) \cdot s_i\bigg(\SMC_y(\mathring{\Sigma}^{f})|_{g}\bigg) &\textit{ if } s_if<f. 
       \end{cases}
       \end{align*}
       and 
       \begin{align*}
           &(1+ye^{-g\alpha_i})\SMC_y(\mathring{\Sigma}^{f})|_{gs_i}\\
           =&\begin{cases}
          (1+y)\SMC_y(\mathring{\Sigma}^{f})|_g +(1-e^{-g\alpha_i})\SMC_y(\mathring{\Sigma}^{fs_i})|_g  & \textit{ if } fs_i<f;\\
         (1+y)e^{-g\alpha_i}\SMC_y(\mathring{\Sigma}^{f})|_g - y(1-e^{-g\alpha_i})\SMC_y(\mathring{\Sigma}^{fs_i})|_g  & \textit{ if } fs_i>f.\end{cases}
       \end{align*}
\end{cor}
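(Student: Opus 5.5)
The plan is to localize both identities of \cref{thm:SMCrecaff} at suitable fixed points, after first writing down how the left and right Demazure--Lusztig operators act on localizations. Set $\gamma=\SMC_y(\mathring{\Sigma}^f)$.

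For the left operators, I would take $(s_i^L\gamma)|_{h}=s_i\bigl(\gamma|_{s_ih}\bigr)$, where $s_i$ acts on the coefficients $K_T(\pt)[y]$. This holds because $s_i^L$ is induced by left translation by $\dot{s}_i$, which sends the fixed point $s_ihI$ to $hI$, and it twists the torus action by $s_i$. Since multiplication by $e^{-\alpha_i}$ is coefficient multiplication, the definition \eqref{equ:lDL} gives
\[
\calT_i^{L,\vee}(\gamma)|_{h}=\frac{1+ye^{\alpha_i}}{1-e^{\alpha_i}}\,s_i\bigl(\gamma|_{s_ih}\bigr)-\frac{1+y}{1-e^{\alpha_i}}\,\gamma|_{h}.
\]
Now take $h=s_ig$ and set this equal to the localization at $s_ig$ of the right-hand side of \cref{thm:SMCrecaff}. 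Apply $s_i$ to the whole equation, using $s_i(e^{\alpha_i})=e^{-\alpha_i}$ and $s_i^2=1$, so that the unknown $\gamma|_{s_ig}$ becomes $s_i(\gamma|_{g})$ after the swap. Then clear the denominator $1-e^{-\alpha_i}$ and isolate the term $\gamma|_{s_ig}$. This should give the first display in each of the two cases $s_if<f$ and $s_if>f$.

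The main obstacle is the algebra, more than the geometry. The relation appears in the conjugated form
\[
\frac{1+ye^{-\alpha_i}}{1-e^{-\alpha_i}}\,\gamma|_{g}-\frac{1+y}{1-e^{-\alpha_i}}\,s_i\bigl(\gamma|_{s_ig}\bigr)=s_i\bigl(\text{RHS}|_{s_ig}\bigr),
\]
so I need to check carefully which of $g$ and $s_ig$ carries the factor $1+ye^{-\alpha_i}$. I would rename $g\leftrightarrow s_ig$, and possibly apply $s_i$ once more, to match the stated normalization, which has $(1+ye^{-\alpha_i})$ multiplying $\gamma|_{s_ig}$ and $s_i(\cdot|_g)$ on the right-hand side. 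In the case $s_if>f$, multiplying through by $1-e^{-\alpha_i}$ turns the coefficient $-(1+y)$ together with the shifted denominators into $(1+y)e^{-\alpha_i}$. That produces the stated coefficients $(1+y)e^{-\alpha_i}$ and $-y(1-e^{-\alpha_i})$. In the case $s_if<f$ the right-hand side is just $\SMC_y(\mathring{\Sigma}^{s_if})$, and the same manipulation yields $(1-e^{-\alpha_i})$ and $(1+y)$.

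For the right operators, I would use $\calT_i^{R,\vee}=\pi_i^*\pi_{i*}(\id+y\mathcal{L}_{\alpha_i})-\id$ together with the BGG localization formula stated before \cref{prop:mcprop}, which extends verbatim to $\Fl_G$. I also need $\mathcal{L}_{\alpha_i}|_{g}=e^{g\alpha_i}$, taken with the sign convention for which that formula holds. Then
\[
\calT_i^{R,\vee}(\gamma)|_g=\frac{(1+ye^{g\alpha_i})\gamma|_g-e^{g\alpha_i}(1+ye^{g\alpha_i}\cdots)\gamma|_{gs_i}}{1-e^{g\alpha_i}}-\gamma|_g,
\]
where I would use $gs_i\alpha_i=-g\alpha_i$ to simplify the second term. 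Setting this equal to the localization at $g$ of the second identity in \cref{thm:SMCrecaff} and solving for $\gamma|_{gs_i}$ gives the second display, with no twist of coefficients, since the right operators are $K_T(\pt)$-linear. As in the finite case, I would verify the sign conventions by checking the resulting formula on $\mathbb{P}^1$, which is $G=SL_2$ restricted to finite $W$.
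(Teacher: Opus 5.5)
Your proposal is correct and follows the paper's proof. You localize the two identities of \cref{thm:SMCrecaff} using $(s_i^L\gamma)|_h=s_i(\gamma|_{s_ih})$ for the left operator, and for the right operator the BGG localization formula with $\mathcal{L}_{\alpha_i}|_g=e^{g\alpha_i}$ (equivalent to the paper's $w^R(\gamma)|_u=\gamma|_{uw}$); then you solve for $\gamma|_{s_ig}$, resp.\ $\gamma|_{gs_i}$. The bookkeeping you worried about disappears if you localize directly at $h=g$, multiply by $1-e^{\alpha_i}$ and apply $s_i$; this yields the first display in exactly the stated normalization, with no renaming.
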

\begin{proof}
    This follows from the explicit formula in \cref{equ:lDL,equ:TiTivee}, \cref{thm:SMCrecaff}, and the fact
    \[w^L(\gamma)|_u=w(\gamma|_{w^{-1}u}), ~w^R(\gamma)|_u=\gamma|_{uw}, ~\gamma\in K_T(\Fl_G).\]
\end{proof}
 
 Since we are considering the small torus $T$, $\delta$ is zero in $K_T(\pt)$. Hence, by \cref{equ:affact}, $e^{t_\mu(\alpha_i)}=e^{\alpha_i}\in K_T(\pt)$ for any $\mu\in X_*(T)$ and any simple root $\alpha_i$. Therefore, letting $g=t_{s_i\mu}$ in the first equation and $g=t_\mu$ in the second equation in \cref{cor:recSMCaff}, we get
\begin{cor}\label{cor:recSMCaffII}
    For any simple root $\alpha_i\in I_\aff$, $\mu\in X_*(T)$, and $f\in W_\ext$,
    \begin{enumerate}
        \item if $s_if>f$ and $fs_i>f$,
        \begin{align*}
            &-y(1-e^{-\alpha_i})\cdot s_i\bigg(\SMC_y(\mathring{\Sigma}^{s_if})|_{t_{s_i\mu}}\bigg)+ (1+y)e^{-\alpha_i}\cdot s_i\bigg(\SMC_y(\mathring{\Sigma}^{f})|_{t_{s_i\mu}}\bigg)\\=&(1+y)e^{-\alpha_i}\cdot \SMC_y(\mathring{\Sigma}^{f})|_{t_\mu} - y(1-e^{-\alpha_i})\cdot \SMC_y(\mathring{\Sigma}^{fs_i})|_{t_\mu}.
        \end{align*}
        \item if $s_if>f$ and $fs_i<f$, 
        \begin{align*}
            &-y(1-e^{-\alpha_i})\cdot s_i\bigg(\SMC_y(\mathring{\Sigma}^{s_if})|_{t_{s_i\mu}}\bigg)+(1+y)e^{-\alpha_i}\cdot s_i\bigg(\SMC_y(\mathring{\Sigma}^{f})|_{t_{s_i\mu}}\bigg)\\=&(1+y)\cdot \SMC_y(\mathring{\Sigma}^{f})|_{t_\mu} +(1-e^{-\alpha_i})\cdot \SMC_y(\mathring{\Sigma}^{fs_i})|_{t_\mu}.
        \end{align*}
        \item if $s_if<f$ and $fs_i>f$, 
        \begin{align*}
            &(1-e^{-\alpha_i})\cdot s_i\bigg(\SMC_y(\mathring{\Sigma}^{s_if})|_{t_{s_i\mu}}\bigg)+ (1+y) \cdot s_i\bigg(\SMC_y(\mathring{\Sigma}^{f})|_{t_{s_i\mu}}\bigg)\\
            =&(1+y)e^{-\alpha_i}\cdot \SMC_y(\mathring{\Sigma}^{f})|_{t_\mu} - y(1-e^{-\alpha_i})\cdot \SMC_y(\mathring{\Sigma}^{fs_i})|_{t_\mu}.
        \end{align*}
        \item if $s_if<f$ and $fs_i<f$, 
        \begin{align*}
            &(1-e^{-\alpha_i})\cdot s_i\bigg(\SMC_y(\mathring{\Sigma}^{s_if})|_{t_{s_i\mu}}\bigg)+ (1+y) \cdot s_i\bigg(\SMC_y(\mathring{\Sigma}^{f})|_{t_{s_i\mu}}\bigg)\\
            =&(1+y)\cdot \SMC_y(\mathring{\Sigma}^{f})|_{t_\mu} +(1-e^{-\alpha_i})\cdot \SMC_y(\mathring{\Sigma}^{fs_i})|_{t_\mu}. 
        \end{align*}
    \end{enumerate}
\end{cor}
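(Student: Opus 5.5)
The statement to prove is \cref{cor:recSMCaffII}. The plan is to specialize the two localized recursions of \cref{cor:recSMCaff} at suitably chosen translation points and then equate them. The common quantity is $(1+ye^{-\alpha_i})\SMC_y(\mathring{\Sigma}^{f})|_{s_it_\mu}$; each of the two formulas expresses it in a different way.

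First I take the first identity of \cref{cor:recSMCaff} with $g=t_{s_i\mu}$. Using $s_it_{s_i\mu}=t_\mu s_i$ in $W_\ext$, the left-hand side becomes
\[
(1+ye^{-\alpha_i})\SMC_y(\mathring{\Sigma}^{f})|_{t_\mu s_i}.
\]
The right-hand side is given by the first case when $s_if>f$ and by the second case when $s_if<f$. These are exactly the left-hand sides appearing in \cref{cor:recSMCaffII}.

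Second, I take the second identity of \cref{cor:recSMCaff} with $g=t_\mu$. Its left-hand side is
\[
(1+ye^{-t_\mu\alpha_i})\SMC_y(\mathring{\Sigma}^{f})|_{t_\mu s_i},
\]
and its right-hand side contains the factors $e^{-t_\mu\alpha_i}$. By \cref{equ:affact}, $t_\mu(\alpha_i)=\alpha_i-\langle\mu,\alpha_i\rangle\delta$. Since $\delta$ is trivial in $K_T(\pt)$ for the small torus $T$, we have $e^{-t_\mu\alpha_i}=e^{-\alpha_i}$.

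For $i=0$ the same holds with $\alpha_0=-\theta$, understood as the finite part of the affine simple root. I also need $s_0t_{s_0\mu}=t_\mu s_0$, which holds because $s_0=t_{\theta^\vee}s_\theta$. Indeed, $s_0t_\nu s_0=t_{s_\theta\nu}$, so conjugating a translation by $s_0$ acts through the finite reflection $s_\theta$, and here $s_i\mu$ should be read as $s_\theta\mu$. This is the only point needing some care. I would check it by a direct computation in $W\ltimes X_*(T)$ and interpret the notation $s_i\mu$ accordingly.

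With this check, both left-hand sides equal $(1+ye^{-\alpha_i})\SMC_y(\mathring{\Sigma}^{f})|_{t_\mu s_i}$, so their right-hand sides must agree. The cases $fs_i<f$ and $fs_i>f$ of the second formula, combined with the cases $s_if\gtrless f$ of the first formula, give the four stated cases.
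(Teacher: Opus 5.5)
Your proposal is correct and is the paper's argument: specialize the first identity of \cref{cor:recSMCaff} at $g=t_{s_i\mu}$ and the second at $g=t_\mu$, use $s_it_{s_i\mu}=t_\mu s_i$ and $e^{t_\mu\alpha_i}=e^{\alpha_i}$ (since $\delta=0$ for the small torus), and equate the right-hand sides, which produces the four stated cases. Your extra check for $i=0$, namely $s_0t_{s_\theta\mu}s_0=t_\mu$ so that $s_0\mu$ must be read as $s_\theta\mu$, is correct and makes explicit a detail the paper leaves implicit.
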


\section{Comparison between the finite and affine case}
In this section, we prove our main result of this paper by relating the Segre motivic Chern classes of the open projected Richardson varieties in the finite partial flag variety $G/P$ and the Segre motivic Chern class of the Schubert cells in the affine flag variety $\Fl_G$. 

\subsection{Affine Grassmannian}
To build a bridge between $G/P$ and $\Fl_G$, we need to use the affine Grassmannian. 
Let $\Gr_G:= G(\!(z)\!)/G[[z]]$ be the {\it affine Grassmannian}. There is a natural projection map $\Fl_G\rightarrow
\Gr_G$ whose fibers are isomorphic to $G/B$. The $T$-fixed points $(\Gr_G)^T$ are in bijection with the coset $W_{\ext}/W\simeq X_*(T)$. For any $\lambda\in X_*(T)$, we let $t_\lambda W$ represent the fixed point $z^{-\lambda}G[[z]]/G[[z]]$. 
For any $\mu\in X_*(T)$ and $\gamma\in K_T(\Gr_G)$, let $\gamma|_{t_\mu W}$ be the localization at the fixed point $t_\mu W$ in $\Gr_G$.

Let $K\subset G$ be the maximal compact subgroup and $T_\mathbb{R}:=K\cap T$ the compact torus. Then we have
\[r\colon \Gr_G\simeq \Omega K\rightarrow LK\rightarrow LK/T_\mathbb{R}\simeq \Fl_G,\] 
where $LK$ (resp., $\Omega K$) is the space of polynomial maps $S^1\to K$ (resp. $(S^1,1)\to (K,1)$). Hence, there is a pullback map, which is called the wrong way map,
\[r^*: K_T(\Fl_G)\rightarrow K_T(\Gr_G).\]
In terms of localization, this is defined as follows:
\begin{equation}\label{equ:rloc}
    r^*(\gamma)|_{t_{\mu}W}=\gamma|_{t_\mu},
\end{equation}
where $\mu\in X_*(T)$, see \cite{LSS10}. Since $\Gr_G$ can be realized as a partial flag variety for the affine Kac-Moody group \cite{Ku02}, we have the operators $w^L$ on $K_T(\Gr_G)$ for any $w\in W_\ext$, and it satisfies the following formula:
\[w^L(\gamma)|_{t_\mu W}=w(\gamma|_{w^{-1}t_\mu W})=w(\gamma|_{t_{w^{-1}\mu}W}),\]
where $\gamma\in K_T(\Gr_G)$ and $\mu\in X_*(T)$. It is easy to check that $r^*$ does not commute with the operators $w^L$. But we have the following relation.
\begin{lem}\label{lem:r*wL}
    For any $\gamma\in K_T(\Fl_G)$, $\mu\in X_*(T)$, and any simple root $\alpha_i$ with $i\in I_\aff$,
    \[s_i(\gamma|_{t_{s_i\mu}})=\bigg(s_i^L(r^*\gamma)\bigg)|_{t_\mu W}.\]
\end{lem}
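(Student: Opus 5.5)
The plan is to compute both sides by localization, using the formula for $w^L$ on $K_T(\Gr_G)$ stated just before the lemma together with \cref{equ:rloc}. No geometry beyond these two localization formulas should be needed. The only work is to identify the $T$-fixed point $s_i^{-1}t_\mu W=s_it_\mu W$ of $\Gr_G$ in the form $t_{\nu}W$, with the correct cocharacter $\nu$.

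First, apply the displayed formula $w^L(\gamma')|_{t_\mu W}=w(\gamma'|_{w^{-1}t_\mu W})$ with $w=s_i$ and $\gamma'=r^*\gamma$. This gives
\[
\big(s_i^L(r^*\gamma)\big)|_{t_\mu W}=s_i\big((r^*\gamma)|_{s_it_\mu W}\big).
\]
Next, rewrite the coset $s_it_\mu W$ as $t_\nu W$.

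\emph{Finite case $i\in I$.} Here $s_i\in W$, so $s_it_\mu=t_{s_i\mu}s_i$ by the relation $wt_\lambda w^{-1}=t_{w\lambda}$. Hence $s_it_\mu W=t_{s_i\mu}W$.

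\emph{Affine case $i=0$.} Using $s_0=t_{\theta^\vee}s_\theta$, we get
\[
s_0t_\mu W=t_{\theta^\vee}t_{s_\theta\mu}s_\theta W=t_{s_\theta\mu+\theta^\vee}W.
\]
So here $s_0\mu$ has to be read as the affine (level one) action $s_0\mu:=s_\theta\mu+\theta^\vee$ on $X_*(T)$. This is exactly what makes $t_{s_0\mu}$ the translation part of the fixed point $s_0t_\mu$, and it is consistent with the notation $g=t_{s_i\mu}$ used in \cref{cor:recSMCaffII}. For the action on the coefficients, note that $\delta=0$ in $K_T(\pt)$, so $s_0$ acts on $K_T(\pt)$ through $s_\theta$, in agreement with \cref{equ:affact}.

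With the coset identified, \cref{equ:rloc} gives $(r^*\gamma)|_{t_{s_i\mu}W}=\gamma|_{t_{s_i\mu}}$. Applying $s_i$ then yields the claimed identity $s_i(\gamma|_{t_{s_i\mu}})=(s_i^L(r^*\gamma))|_{t_\mu W}$.

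The computation itself is routine. I expect the main obstacle to be the case $i=0$: the symbol $s_0\mu$ and the action of $s_0$ on $K_T(\pt)$ must be fixed precisely (the affine action on cocharacters, and $s_\theta$ on characters). Only with these conventions do the two sides match. I would state these conventions explicitly at the start of the proof.
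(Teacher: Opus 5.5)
For $i\in I$ your argument is correct. The paper states this lemma without proof, as an immediate consequence of \cref{equ:rloc} and the localization formula for $w^L$ on $K_T(\Gr_G)$. That is exactly what you do, using $s_it_\mu W=t_{s_i\mu}W$.

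\textbf{The problem is your treatment of $i=0$.}

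\emph{What is right.} Your coset computation $s_0t_\mu W=t_{s_\theta\mu+\theta^\vee}W$ is correct. With $s_0\mu:=s_\theta\mu+\theta^\vee$ the identity becomes a tautology.

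\emph{What is false.} Your claim that this reading is consistent with \cref{cor:recSMCaffII} does not hold.
\begin{itemize}
\item That corollary comes from \cref{cor:recSMCaff} by matching two points: $s_ig$ with $g=t_{s_i\mu}$ from the left recursion, and $t_\mu s_i$ from the right recursion. So it requires $s_it_{s_i\mu}=t_\mu s_i$, i.e.\ $t_{s_0\mu}=s_0t_\mu s_0=t_{s_\theta\mu}$. This is the \emph{linear} action on cocharacters.
\item Under your affine reading the points do not match: $s_0t_{s_\theta\mu+\theta^\vee}=t_\mu s_\theta\neq t_\mu s_0$.
\item Under the linear reading the lemma fails at $i=0$. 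Take $\mu=0$ and $\gamma=\widetilde{\MC}_y(\mathring{\Sigma}^{t_{\theta^\vee}})$. The left side is $s_0(\gamma|_{\id})=0$, because the dual basis element vanishes at $h$ unless $t_{\theta^\vee}\leq h$. The right side is $s_0(\gamma|_{t_{\theta^\vee}})\neq 0$, the nonzero diagonal coefficient.
\end{itemize}

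\emph{Why.} On fixed points, $r(s_it_\mu W)=s_it_\mu s_i$ for $i\in I$, but $r(s_0t_\mu W)=s_0t_\mu s_\theta$, not $s_0t_\mu s_0$. So no single convention for $s_0\mu$ makes both the lemma and \cref{cor:recSMCaffII} true. As a result, the deduction of \cref{thm:recSMCGr} breaks down at $i=0$.

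\emph{How to fix it.} Either of the following works:
\begin{itemize}
\item Restrict the lemma to $i\in I$. This is all that \cref{thm:main} uses, since its induction takes $s_iw<w$ with $w\in W$.
\item Keep the linear reading and replace $s_0^L$ on the right-hand side by $s_\theta^L$. Then $(s_\theta^L r^*\gamma)|_{t_\mu W}=s_\theta(\gamma|_{t_{s_\theta\mu}})$, and $s_\theta$ and $s_0$ act identically on $K_T(\pt)$ because $\delta=0$.
\end{itemize}

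More generally, a convention chosen only so that ``the two sides match'' has to be checked against how the lemma is actually used downstream.
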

With \cref{equ:rloc,lem:r*wL}, \cref{cor:recSMCaffII} yields the following result. 
\begin{thm}\label{thm:recSMCGr}
    For any simple root $\alpha_i\in I_\aff$, $\mu\in X_*(T)$, and $f\in W_\ext$,
    \begin{enumerate}
        \item if $s_if>f$ and $fs_i>f$,
        \begin{align*}
            &-y(1-e^{-\alpha_i})\cdot s_i^L\bigg(r^*\SMC_y(\mathring{\Sigma}^{s_if})\bigg)+ (1+y)e^{-\alpha_i}\cdot s_i^L\bigg(r^*\SMC_y(\mathring{\Sigma}^{f})\bigg)\\=&(1+y)e^{-\alpha_i}\cdot r^*\SMC_y(\mathring{\Sigma}^{f}) - y(1-e^{-\alpha_i})\cdot r^*\SMC_y(\mathring{\Sigma}^{fs_i}).
        \end{align*}
        \item if $s_if>f$ and $fs_i<f$, 
        \begin{align*}
            &-y(1-e^{-\alpha_i})\cdot s_i^L\bigg(r^*\SMC_y(\mathring{\Sigma}^{s_if})\bigg)+(1+y)e^{-\alpha_i}\cdot s_i^L\bigg(r^*\SMC_y(\mathring{\Sigma}^{f})\bigg)\\=&(1+y)\cdot r^*\SMC_y(\mathring{\Sigma}^{f}) +(1-e^{-\alpha_i})\cdot r^*\SMC_y(\mathring{\Sigma}^{fs_i}).
        \end{align*}
        \item if $s_if<f$ and $fs_i>f$, 
        \begin{align*}
            &(1-e^{-\alpha_i})\cdot s_i^L\bigg(r^*\SMC_y(\mathring{\Sigma}^{s_if})\bigg)+ (1+y) \cdot s_i^L\bigg(\SMC_y(\mathring{\Sigma}^{f})\bigg)\\=&(1+y)e^{-\alpha_i}\cdot r^*\SMC_y(\mathring{\Sigma}^{f}) - y(1-e^{-\alpha_i})\cdot r^*\SMC_y(\mathring{\Sigma}^{fs_i}).
        \end{align*}
        \item if $s_if<f$ and $fs_i<f$, 
        \begin{align*}
            &(1-e^{-\alpha_i})\cdot s_i^L\bigg(r^*\SMC_y(\mathring{\Sigma}^{s_if})\bigg)+ (1+y) \cdot s_i^L\bigg(r^*\SMC_y(\mathring{\Sigma}^{f})\bigg)\\=&(1+y)\cdot r^*\SMC_y(\mathring{\Sigma}^{f}) +(1-e^{-\alpha_i})\cdot r^*\SMC_y(\mathring{\Sigma}^{fs_i}). 
        \end{align*}
    \end{enumerate}
\end{thm}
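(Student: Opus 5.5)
The plan is to verify each of the four identities by localizing at every $T$-fixed point $t_\mu W$ of $\Gr_G$. This suffices because the localization map $K_T(\Gr_G)[[y]]\to\prod_{\mu}K_T(\pt)[[y]]$ is injective; I take this from the description of $K_T(\Gr_G)$ in \cite{LSS10}, i.e., from its realization as a Kac--Moody partial flag variety. So I fix a case (1)--(4), a cocharacter $\mu\in X_*(T)$, and compare the restrictions of the two sides to $t_\mu W$. Both sides are $K_T(\pt)[y]$-linear combinations of classes of the form $r^*\gamma$ and $s_i^L(r^*\gamma)$, with coefficients $1+y$, $(1+y)e^{-\alpha_i}$, $-y(1-e^{-\alpha_i})$, $(1-e^{-\alpha_i})$ that are simply multiplied by the scalars. (In case (3) the second term on the left, $s_i^L(\SMC_y(\mathring{\Sigma}^{f}))$, should read $s_i^L(r^*\SMC_y(\mathring{\Sigma}^{f}))$, and I prove it in that form.)

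The terms without $s_i^L$ are handled by \cref{equ:rloc}: $(r^*\SMC_y(\mathring{\Sigma}^{g}))|_{t_\mu W}=\SMC_y(\mathring{\Sigma}^{g})|_{t_\mu}$ for $g\in\{f,fs_i\}$. For the terms with $s_i^L$ I use \cref{lem:r*wL}: $\bigl(s_i^L(r^*\SMC_y(\mathring{\Sigma}^{g}))\bigr)|_{t_\mu W}=s_i\bigl(\SMC_y(\mathring{\Sigma}^{g})|_{t_{s_i\mu}}\bigr)$ for $g\in\{f,s_if\}$. Since the coefficients are multiplications by elements of $K_T(\pt)[y]$, and localization is $K_T(\pt)$-linear, the coefficients pass through unchanged. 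The localized identity is then exactly the corresponding case of \cref{cor:recSMCaffII}, and the case hypotheses on $s_if$ versus $f$ and $fs_i$ versus $f$ match one for one.

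The only point that needs care is that \cref{lem:r*wL} is applied in the direction where the coefficient stays outside $s_i^L$. In the statement the factors $(1-e^{-\alpha_i})$ and $(1+y)e^{-\alpha_i}$ multiply $s_i^L(\cdots)$ from outside, so they are not conjugated by $s_i$. In \cref{cor:recSMCaffII} the same coefficients multiply $s_i(\cdots)$ from outside, so the two agree.

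I also need $e^{t_\mu(\alpha_i)}=e^{\alpha_i}$ for $i=0$, where $\alpha_0=-\theta$ (up to $\delta$). This holds because $\delta$ maps to zero in $K_T(\pt)$, and it is already built into \cref{cor:recSMCaffII}.

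The main obstacle is therefore not computational. It is justifying \cref{lem:r*wL} (which I assume, as stated earlier) and the injectivity of localization on the possibly completed group $K_T(\Gr_G)[[y]]$. For the latter, I would argue coefficientwise in $y$, using that $K_T(\Gr_G)$ embeds in the product of its fixed-point localizations.
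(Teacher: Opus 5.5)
Your reduction is the paper's own one-line argument: localize at $t_\mu W$, use \cref{equ:rloc} and \cref{lem:r*wL}, and read off \cref{cor:recSMCaffII}. Your correction of case (3) is right. For $i\in I$ the argument is complete. Indeed $s_it_\mu W=t_{s_i\mu}W$ and $s_it_{s_i\mu}=t_\mu s_i$, so both formulas of \cref{cor:recSMCaff} are evaluated at the same point $t_\mu s_i$.

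For $i=0$, however, your step ``the localized identity is exactly the corresponding case of \cref{cor:recSMCaffII}'' fails, and the paper's proof has the same defect. The symbol $t_{s_0\mu}$ is being used in two incompatible senses:
\begin{itemize}
\item For \cref{lem:r*wL} to hold, $t_{s_0\mu}$ must be defined by $s_0t_\mu W=t_{s_0\mu}W$. Since $s_0=t_{\theta^\vee}s_\theta$, this gives $t_{s_\theta\mu+\theta^\vee}$.
\item For \cref{cor:recSMCaffII} to follow from \cref{cor:recSMCaff}, one needs $s_0t_{s_0\mu}=t_\mu s_0$, i.e.\ $t_{s_0\mu}=s_0t_\mu s_0=t_{s_\theta\mu}$.
\end{itemize}
With the geometrically correct reading, the $s_0^L$-side of each identity at $t_\mu W$ equals $(1+ye^{-\alpha_0})\SMC_y(\mathring{\Sigma}^f)|_{t_\mu s_\theta}$. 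The other side equals $(1+ye^{-\alpha_0})\SMC_y(\mathring{\Sigma}^f)|_{t_\mu s_0}$, and $t_\mu s_0=t_{\mu+\theta^\vee}s_\theta$ is a different point.

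The $i=0$ statement is in fact false. Take $G=SL_2$ (so $t_{\theta^\vee}=s_0s_1$), take $f=s_0$ (this is case (4)), and localize at the base point $t_0W=W$.
\begin{itemize}
\item The right side is $(1+y)\SMC_y(\mathring{\Sigma}^{s_0})|_{\id}+(1-e^{-\alpha_0})\SMC_y(\mathring{\Sigma}^{\id})|_{\id}=1-e^{-\alpha_0}\neq 0$. Here the first localization vanishes by support, and the second is $1$ by \cref{def:SMCaff} and $\MC_y(\mathring{\Sigma}_{\id})=[\calO_{\id}]$.
\item The left side, since $s_0W=t_{\theta^\vee}W$, is $(1-e^{-\alpha_0})s_0\bigl(\SMC_y(\mathring{\Sigma}^{\id})|_{s_0s_1}\bigr)+(1+y)s_0\bigl(\SMC_y(\mathring{\Sigma}^{s_0})|_{s_0s_1}\bigr)$. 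By the first formula of \cref{cor:recSMCaff} with $g=s_0s_1$, this equals $(1+ye^{-\alpha_0})\SMC_y(\mathring{\Sigma}^{s_0})|_{s_1}$, which is $0$ because $s_0\not\le s_1$.
\end{itemize}
So you should restrict your proof to $i\in I$. That is all \cref{thm:main} uses, since there $s_iw<w$ with $w\in W^P\subset W$. A correct $i=0$ identity would need the right-hand side localized at $t_{\mu-\theta^\vee}W$ rather than $t_\mu W$.
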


\subsection{Main result}
For any dominant cocharacter $\mu$, let $\Gr_\mu:=G[[z]]z^{-\mu} G[[z]]/G[[z]]\subset \Gr_G$ be the spherical Schubert cell. Then 
\[\Gr_G=\bigsqcup_{\mu\in X_*(T)^+}\Gr_\mu,\]
where $X_*(T)^+$ denotes the dominant cocharacters.
Let $\ev:G[[z]]\rightarrow G$ be the evaluation at $z=0$ map. 

Now let us fix a dominant cocharacter $\lambda$. Then we have a subset of the simple roots $\{\alpha_i\mid \langle \lambda,\alpha_i^\vee\rangle=0\}$, and let $P$ be the corresponding parabolic subgroup containing the Borel subgroup $B$. By \eqref{equ:affact},
\[\ev\bigg(G[[z]]\cap z^{-\lambda} G[[z]]z^\lambda\bigg)=P\subset G.\]
Hence,
\[\Gr_\lambda\simeq G[[z]]/(G[[z]]\cap z^{-\lambda} G[[z]]z^\lambda)\]
maps to $G/P$ via the map $\ev$. Moreover, it
is  an affine bundle over $G/P$, 
and denote the closed embedding  
\[i_\lambda:G/P\simeq G\cdot z^{-\lambda} G[[z]]/G[[z]]\subset \Gr_\lambda,\]
by regarding $G$ as a subgroup of $G[[z]]$ of constant loops, see \cite{Zhu}. We need the following easy lemma, see \cite{FGSX} for a proof.
\begin{lem}\label{lem:tanweGr}
    The $T\times\mathbb{C}^*_{\rot}$ weights of the tangent space $T_{t_{\lambda}W}\Gr_\lambda$ is
    \[\{-\alpha+k\delta\mid 0\leq k< \langle\lambda,\alpha\rangle\}.\]
\end{lem}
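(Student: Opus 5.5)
The plan is to compute the tangent space directly as a quotient of Lie algebras. Write $x_\lambda:=z^{-\lambda}G[[z]]/G[[z]]=t_\lambda W$. Since $\Gr_\lambda=G[[z]]\cdot x_\lambda$ is a single homogeneous orbit, the orbit map $g\mapsto g\cdot x_\lambda$ identifies
\[
T_{t_\lambda W}\Gr_\lambda\simeq \mathfrak{g}[[z]]\big/\big(\mathfrak{g}[[z]]\cap \operatorname{Ad}(z^{-\lambda})\mathfrak{g}[[z]]\big),
\]
since the stabilizer of $x_\lambda$ in $G[[z]]$ is $G[[z]]\cap z^{-\lambda}G[[z]]z^{\lambda}$, as recalled above.

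First I check that this identification is $T\times\mathbb{C}^*_{\rot}$-equivariant. For $T$, note that $T$ commutes with $z^{-\lambda}$. For loop rotation by $c$, the rotated point is $c^{-\lambda}z^{-\lambda}G[[z]]=z^{-\lambda}c^{-\lambda}G[[z]]=z^{-\lambda}G[[z]]$. Hence the point is fixed, and the induced action on the quotient is just the natural rotation action on $\mathfrak{g}[[z]]$, with no twist.

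Next I decompose $\mathfrak{g}[[z]]=\mathfrak{t}[[z]]\oplus\bigoplus_{\alpha\in R,\,k\ge0}\mathfrak{g}_\alpha z^k$. The summand $\mathfrak{g}_\alpha z^k$ has weight $\alpha+k\delta$, using the same convention for $\delta$ as in \cref{lem:tangentwt} and \eqref{equ:affact}. Using $\operatorname{Ad}(z^{-\lambda})(X_\alpha z^m)=z^{m-\langle\lambda,\alpha\rangle}X_\alpha z^{0}\cdot$ (i.e.\ it becomes $X_\alpha z^{m-\langle\lambda,\alpha\rangle}$), the line $\mathfrak{g}_\alpha z^k$ lies in $\operatorname{Ad}(z^{-\lambda})\mathfrak{g}[[z]]$ exactly when $k+\langle\lambda,\alpha\rangle\ge 0$. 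The Cartan part $\mathfrak{t}[[z]]$ always lies in the stabilizer. Therefore the quotient is spanned by the $\mathfrak{g}_\alpha z^k$ with $0\le k<-\langle\lambda,\alpha\rangle$, which forces $\alpha\in R^-$. Writing $\alpha=-\beta$ with $\beta>0$, these are the weights $-\beta+k\delta$ with $0\le k<\langle\lambda,\beta\rangle$, as claimed. As a sanity check, the $k=0$ weights are exactly the tangent weights $-\beta$ of $G/P$ at the base point, for $\beta$ a positive root with $\langle\lambda,\beta\rangle>0$.

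The only real obstacle is bookkeeping of sign conventions: whether the fixed point is $z^{-\lambda}$ or $z^{\lambda}$, and the sign of $\delta$ under rotation. I will fix these by matching against \cref{lem:tangentwt} and \eqref{equ:affact}. Concretely, I will check that the $\delta$-sign convention used here, in which $z$ has weight $\delta$, gives $T_{t_\lambda}\mathring{\Sigma}_{t_\lambda}$ weights $\alpha+k\delta$ with $\alpha<0$ and $1\le k\le-\langle\lambda,\alpha\rangle$ in the analogous computation with $\mathcal{I}$ in place of $G[[z]]$.
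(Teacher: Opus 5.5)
Your proposal is correct. Identifying $T_{t_\lambda W}\Gr_\lambda$ with $\mathfrak{g}[[z]]/\big(\mathfrak{g}[[z]]\cap\operatorname{Ad}(z^{-\lambda})\mathfrak{g}[[z]]\big)$ and checking which root lines $\mathfrak{g}_\alpha z^k$ survive gives exactly the weights $-\beta+k\delta$ with $\beta>0$ and $0\le k<\langle\lambda,\beta\rangle$.

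The consistency check you propose also goes through. The same computation with $\operatorname{Lie}\mathcal{I}=\mathfrak{b}+z\mathfrak{g}[[z]]$ in place of $\mathfrak{g}[[z]]$ yields precisely the weights $\alpha+k\delta$ with $\alpha\in R^-$ and $1\le k\le-\langle\lambda,\alpha\rangle$, as in \cref{lem:tangentwt}. This confirms the conventions ($z$ has weight $\delta$, and $t_\lambda$ corresponds to $z^{-\lambda}$), in agreement with \eqref{equ:affact}.

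The paper gives no argument of its own and only refers to \cite{FGSX}, so your computation is a self-contained proof. One detail is worth making explicit. For $N\gg0$ the normal congruence subgroup $\ker\big(G[[z]]\to G(\mathbb{C}[z]/z^N)\big)$ lies in the stabilizer $G[[z]]\cap z^{-\lambda}G[[z]]z^{\lambda}$, and therefore acts trivially on $\Gr_\lambda$. Hence $\Gr_\lambda$ is a homogeneous space of the finite-dimensional group $G(\mathbb{C}[z]/z^N)$, and the usual orbit-map description of the tangent space applies.
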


Consider the following diagram,
$$\xymatrix{
G/P\ar[r]^-{i_\lambda} & \Gr_\lambda\ar[r]^-{j_\lambda}& \Gr_G \ar[r]^-r &  \Fl_G,}$$
where $i_\lambda$ and $j_\lambda$ are inclusions, and let
\[q^*:=j_\lambda^*\circ r^*:K_T(\Fl_G)\rightarrow K_T(\Gr_\lambda).\]
Recall from \cref{def:OpenRic} that  for $u\in W$ and $w\in W^P$, we have the Segre motivic Chern class of the open projected Richardson variety $\SMC_y(\mathring{\Pi}_{f}), f=f^{\lambda}_{u,w}=ut_\lambda w^{-1}\in W_\ext$.  It is related to the SMC class $\SMC_y(\mathring{\Sigma}^{f})$ in \cref{def:SMCaff} as follows. 
\begin{thm}\label{thm:main}
    With the above notations and let $\mathcal{N}$ be the normal bundle of $G/P$ inside $\Gr_\lambda$, then
    \[i_{\lambda *}\bigg(\frac{\SMC_y(\mathring{\Pi}_{f})}{\lambda_y(\mathcal{N}^*)}\bigg)=q^*\bigg(\SMC_y(\mathring{\Sigma}^{f})\bigg)\in K_T(\Gr_\lambda)[[y]].\]
\end{thm}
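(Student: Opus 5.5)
We argue by localization at the $T$-fixed points of $\Gr_\lambda$ together with induction along the recursions of \cref{thm:MCPif} and \cref{thm:recSMCGr}. The fixed points of $\Gr_\lambda$ are the $t_{v\lambda}W$ with $v\in W^P$. All of them lie in the zero section $i_\lambda(G/P)$, since $vP$ maps to $t_{v\lambda}W$.

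\textbf{Reduction to fixed points.} By self-intersection, $i_\lambda^*i_{\lambda*}(\gamma)=\lambda_{-1}(\mathcal N^*)\gamma$. Hence the localization of the left-hand side at $t_{v\lambda}W$ equals
\[\frac{\lambda_{-1}(\mathcal N^*)|_{v}}{\lambda_y(\mathcal N^*)|_v}\,\SMC_y(\mathring\Pi_f)|_{v}.\]
The weights of $\mathcal N$ at $vP$ are read off from \cref{lem:tanweGr} with $\delta=0$: they are the weights $-v\alpha$ with multiplicity $\langle\lambda,\alpha\rangle-1$ for $\alpha>0$. The right-hand side localizes to $\SMC_y(\mathring\Sigma^f)|_{t_{v\lambda}}$ by \cref{equ:rloc}. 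Localization is injective on $K_T(\Gr_\lambda)$, since it is an affine bundle over $G/P$. So it suffices to prove, for all $v\in W$ and all $f=f^\lambda_{u,w}$, that
\begin{equation*}
\Phi_f:=\frac{\SMC_y(\mathring\Pi_f)}{\lambda_y(\mathcal N^*)}\cdot\lambda_{-1}(\mathcal N^*)
\end{equation*}
agrees at each fixed point with the localization of $q^*\SMC_y(\mathring\Sigma^f)$. Equivalently, we prove the identity $i_\lambda^*i_{\lambda*}(\cdot)=j_\lambda^*r^*(\cdot)$ as classes on $G/P$.

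\textbf{Matching the recursions.} Both families $\{i_{\lambda*}(\SMC_y(\mathring\Pi_f)/\lambda_y(\mathcal N^*))\}_f$ and $\{q^*\SMC_y(\mathring\Sigma^f)\}_f$, indexed by $f\in\calB^+$, satisfy the same four-case recursion in $s_i$ for $i\in I$. For the finite side this follows from \cref{thm:MCPif}: apply $i_{\lambda*}$, which commutes with $s_i^L$ because $i_\lambda$ is $G$-equivariant, and note that $\lambda_y(\mathcal N^*)$ is $G$-equivariant and hence $s_i^L$-invariant. For the affine side it follows from \cref{thm:recSMCGr} restricted to $\Gr_\lambda$, which is $G$-stable so that $j_\lambda^*$ commutes with $s_i^L$. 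A direct comparison shows the four cases coincide term by term. Indeed, case (1) of \cref{thm:MCPif} ($s_if<f$, $fs_i<f$) is \cref{thm:recSMCGr}(4) rearranged after moving the $s_i^L$-terms. The other cases match the same way, using $e^{-\alpha_i}$-multiples, and \cref{lem:fuw} guarantees that $s_if$ and $fs_i$ remain in $\calB^+$ whenever they appear.

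\textbf{Uniqueness and base case.} The recursion determines the whole family from one value, provided we can move through $\calB^+$. Take the maximal element $f=t_\lambda w_0^{P,-1}$-type, or better the minimal one $f=t_\lambda$, i.e.\ $u=w=\id$, where $\mathring\Pi_{t_\lambda}$ is the point $eP$. We induct on $\ell(f)$, using case (3) or (4): when $s_if<f$ we solve for $\SMC(\Sigma^{fs_i})$ or $\SMC(\Sigma^{s_if})$ from lower terms. The delicate point is that the recursion expresses a combination involving $s_i^L$ rather than a single new class. We therefore need a solvability argument: at each fixed point, the coefficient of the new class, $(1-e^{-\alpha_i})$ or $y(1-e^{-\alpha_i})$, is a nonzero divisor. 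In addition, every $f\in\calB^+$ must be reachable from $t_\lambda$ by left and right simple multiplications inside $\calB^+$, which holds since $u$ grows by left multiplications and $w$ by right ones.

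\textbf{Main obstacle.} The main difficulty is the base case: checking that
\[\SMC_y(\mathring\Sigma^{t_\lambda})|_{t_{v\lambda}}=\delta_{v\in W_P}\frac{\lambda_{-1}(\mathcal N^*)|_e}{\lambda_y(\mathcal N^*)|_e}\cdot\frac{[\calO_{eP}]|_e}{\lambda_y(T^*G/P)|_e}.\]
The left side vanishes off $\overline{\mathring\Sigma^{t_\lambda}}$ except at $t_\lambda$. This requires knowing that $t_{v\lambda}\not\ge t_\lambda$ in the relevant sense for $v\notin W_P$, since $t_\lambda$ has maximal length among the $t_{v\lambda}$, and it requires the localization of the SMC class at its own fixed point. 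By \cref{def:SMCaff}, the latter equals $\widetilde\MC_y(\mathring\Sigma^{t_\lambda})|_{t_\lambda}=1/\prod(1+ye^{-\beta})$ times a product of $(1-e^{\beta})$ over the normal weights. These weights come from \cref{lem:tangentwt}. Collapsing $\delta$ to $0$, the factors should reproduce exactly the $T^*G/P$ and $\mathcal N$ factors via \cref{lem:tanweGr}. This weight bookkeeping, combined with the solvability step, is where the care goes.
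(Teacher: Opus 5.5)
Your term-by-term matching of the four cases of Theorem~\ref{thm:MCPif} with Theorem~\ref{thm:recSMCGr} is correct, and so is the overall plan (localize, match recursions, induct). The uniqueness step fails as you set it up, however. Only $i\in I$ occurs on the $G/P$ side, and with those relations the family is \emph{not} determined by its value at $f=t_\lambda$.

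The first step already shows this:
\begin{itemize}
\item if $s_i\notin W_P$, the relation at $f=t_\lambda$ (case (3)) contains two unknown classes, those of $s_it_\lambda=f^\lambda_{s_i,\id}$ and $t_\lambda s_i=f^\lambda_{\id,s_i}$;
\item if $s_i\in W_P$, then $s_it_\lambda=t_\lambda s_i$, and case (4) only determines $(s_i^L-\id)$ applied to the new class.
\end{itemize}
Here is a concrete failure. Take $G=SL_2$, $P=B$, write $X_{u,w}$ for the class indexed by $f^\lambda_{u,w}$, and set $a=e^{-\alpha}$. Then $X_{\id,\id}=0$, $X_{s,\id}$ arbitrary, $X_{\id,s}=-y\,s^L(X_{s,\id})$ and $X_{s,s}=\frac{1+y}{1-a}(s^L-\id)(X_{s,\id})$ satisfy all four relations (use $s(a)=a^{-1}$ and $(s^L)^2=\id$). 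So the homogeneous system has nonzero solutions.

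Your induction scheme has further problems. ``Induction on $\ell(f)$ upward from $t_\lambda$'' is not well founded, since $\ell(f^\lambda_{u,w})=\ell(u)+\ell(t_\lambda)-\ell(w)$ takes values below $\ell(t_\lambda)$. Also, right multiplication $f\mapsto fs_i$ changes $w$ to $s_iw$, not $ws_i$.

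The missing ingredient is the base case for \emph{all} $u$ with $w=\id$. The left side vanishes for $u\neq\id$ because $\mathring\Pi_{u,\id}=\varnothing$ (at the level of classes, $\SMC_y(Y(u)^\circ)|_{e}=0$). You also need $q^*\SMC_y(\mathring\Sigma^{ut_\lambda})=0$. This follows from Definition~\ref{def:SMCaff} and the support of the dual classes: $\SMC_y(\mathring\Sigma^{g})|_h\neq 0$ forces $g\le h$. But $ut_\lambda\le t_{v\lambda}$ is impossible for $u\neq\id$, since $\ell(ut_\lambda)=\ell(u)+\ell(t_\lambda)>\ell(t_\lambda)=\ell(t_{v\lambda})$. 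The same argument with $u=\id$ gives the vanishing at $t_{v\lambda}\neq t_\lambda$ that you want. Note that all $t_{v\lambda}$ have the same length, so ``maximal length'' is not the reason.

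Then induct on $\ell(w)$ rather than $\ell(f)$. Choose $s_iw<w$, so $s_iw\in W^P$ by Lemma~\ref{lem:parabolic}, and put $f=f^\lambda_{u,s_iw}$. Both $f$ and $s_if=f^\lambda_{s_iu,s_iw}$ are covered by the induction hypothesis for all $u$, and $fs_i=f^\lambda_{u,w}<f$. Cases (1)/(3) of Theorem~\ref{thm:MCPif} against cases (4)/(2) of Theorem~\ref{thm:recSMCGr} then isolate the new class with the nonzero-divisor coefficient $1-e^{-\alpha_i}$.

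Your value at $f=t_\lambda$, $v=\id$ agrees with the paper's $b_{t_\lambda}$. It comes from the tangent weights of the cell $\mathring\Sigma_{t_\lambda}$ (Lemma~\ref{lem:tangentwt} and the smooth-point formula for $\MC_y$), not from normal weights.
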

\begin{rem}\label{rem:pushpull}
    The spherical Schubert cell $\Gr_\lambda$ has a $G$ action by left multiplication. Hence, $K_T(\Gr_\lambda)$ has a $W$ action as before. Since the map $i_{\lambda}$ is $G$-equivariant, the pushforward $i_{\lambda *}$ commutes with the action $w^L$. Besides, the normal bundle $\mathcal{N}$ is invariant under the $w^L$ action, as it has a $G$-equivariant structure. Hence, the classes $i_{\lambda *}\bigg(\frac{\SMC_y(\mathring{\Pi}_{f})}{\lambda_y(\mathcal{N}^*)}\bigg)$ satisfy the same recursive formulae as $\SMC_y(\mathring{\Pi}_{f})$ in \cref{thm:MCPif}. By the same reasoning, $q^*\bigg(\SMC_y(\mathring{\Sigma}^{f})\bigg)$ satisfies the same recursive formulae as $\SMC_y(\mathring{\Sigma}^{f})$ in \cref{thm:recSMCGr}.
\end{rem}
\begin{rem}
    Since the left hand side vanishes if $u\nleq w$, we get same vanishing result for the right hand side, which can also be obtained directly from localization.  
\end{rem}
\begin{proof} 
    We prove by induction on $\ell(w), w\in W^P$. Assume $w=\id$. Then the left hand side is $0$ if $u\neq \id$. On the other, all the torus fixed points in $\Gr_\lambda$ are of the form $t_{v\lambda}W$ for some $v\in W^\lambda$. If
    \[q^*\bigg(\SMC_y(\mathring{\Sigma}^{ut_\lambda})\bigg)|_{t_{v\lambda}W}=\SMC_y(\mathring{\Sigma}^{ut_\lambda})|_{t_{v\lambda}}\neq 0,\]
    then $ut_\lambda\leq t_{v\lambda}$, which implies $u=v=\id$ because of $\cref{equ:leng}$ and the fact $\ell(t_{v\lambda})=\ell(t_{\lambda})$. So in case $w=\id$, we can assume $u=\id$, that is,  $f=t_\lambda$, and it suffices to compare their restrictions at $t_{\la}W$.
    
    We first compute the LHS. $X(w)^\circ=\{e\cdot B\}\subset G/B$ is the identity point, where $e\in G$ is the unit element. Hence, $\MC_y(X(w)^\circ)=[\calO_{e\cdot B}]$ is the structure sheaf of the point. By \cref{def:MCR},
    \[\MC_y(\mathring{R}_{u,w})=[\calO_{e\cdot B}]\otimes \SMC_y(Y(u)^\circ)=\frac{\MC_y(Y(u)^\circ)|_{e\cdot B}}{\lambda_y(T^*(G/B))|_{e\cdot B}}[\calO_{e\cdot B}]=[\calO_{e\cdot B}],\]
    where the last equality follows from the fact that $Y(u)^\circ$ is smooth at the point $e\cdot B$, and the closure of $Y(u)^\circ$ is the whole flag variety $G/B$, see \cite{AMSS24}. Therefore, by \cref{def:OpenRic},
    \[\SMC_y(\mathring{\Pi}_{f})=\frac{[\calO_{e\cdot P}]}{\lambda_y(T^*(G/P))}=\frac{[\calO_{e\cdot P}]}{\lambda_y(T^*(G/P))|_{e\cdot P}},\]
    where $e\cdot P\in G/P$ is the identity point in $G/P$. Hence,
    \begin{equation}\label{equ:lhs}
        i_{\lambda *}\bigg(\frac{\SMC_y(\mathring{\Pi}_{f})}{\lambda_y(\mathcal{N}^*)}\bigg)=\frac{[\mathcal{O}_{t_\lambda W}]}{\lambda_y(T^*\Gr_\lambda)|_{t_\lambda W}}=\frac{[\mathcal{O}_{t_\lambda W}]}{\prod_{\alpha,\langle\lambda,\alpha\rangle>0}(1+ye^{\alpha})^{\langle\lambda,\alpha\rangle}}.
    \end{equation}
    Here $\mathcal{O}_{t_\lambda W}$ is the structure sheaf of the fixed point $t_\lambda W\in \Gr_G$, and the second equality follows from \cref{lem:tanweGr}. 
    
    Let us now compute the RHS with $u=w=\id$. First of all, for any $g\in W_\ext$, the motivic Chern class $\MC_y(\mathring{\Sigma}_{g})$ is supported on the Schubert variety $\overline{\mathring{\Sigma}_{g}}$. Hence, the dual basis $\widetilde{\MC}_y(\mathring{\Sigma}^{g})$ defined in \cref{equ:defdual} has zero localization at the fixed points $h\in W_\ext$ if $g\nleq h$. Therefore, by \cref{def:SMCaff}, for any $v\neq id\in W$,
    \[q^*\bigg(\SMC_y(\mathring{\Sigma}^{f})\bigg)|_{t_{v\lambda}W}=0.\]
    Notice that all the torus fixed points in $\Gr_\lambda$ are of the form $t_{v\lambda}W$ for some $v\in W$. Therefore, $q^*\bigg(\SMC_y(\mathring{\Sigma}^{f})\bigg)$ is a multiple of the structure sheaf of the fixed point $[\mathcal{O}_{t_\lambda W}]$. Let us now compute this scalar.
    Since the Schubert cell $\mathring{\Sigma}_{t_\lambda}$ is smooth at the torus fixed point $t_\lambda\in \Fl_G$, we get 
    \[\MC_y(\mathring{\Sigma}_{t_\lambda})=\sum_{g\leq t_\lambda} a_g[e_g]\]
    for some coefficients $a_g\in \Frac K_T(\pt)$ with 
    \[a_{t_\lambda}=\prod_\chi \frac{1+ye^{-\chi}}{1-e^{-\chi}}=\prod_{\alpha,\langle\lambda,\al\rangle>0}\bigg(\frac{1+ye^\alpha}{1-e^\al}\bigg)^{\langle\lambda,\al\rangle}.\]
    Here the $\chi$ in the middle is the $T$-weights of the tangent space $T_{t_\lambda}\mathring{\Sigma}_{t_\lambda}$, the first equality follows\cite[Lemma 9.1]{AMSS24}, and second equality follows from \cref{lem:tangentwt}. Hence,
    \[[e_{t_\lambda}]=\sum_{g\leq t_\lambda} b_g \MC_y(\mathring{\Sigma}_g)\]
    for some coefficients $b_g\in \Frac K_T(\pt)$, with 
    \[b_{t_\lambda}=\frac{1}{a_{t_\lambda}}=\prod_{\alpha,\langle\lambda,\al\rangle>0}\bigg(\frac{1-e^\al}{1+ye^\alpha}\bigg)^{\langle\lambda,\al\rangle}.\]
    Hence,
    \begin{align*}
        q^*\bigg(\SMC_y(\mathring{\Sigma}^{f})\bigg)|_{t_{\lambda}W}=&\widetilde{\MC}(\mathring{\Sigma}^{t_\lambda})|_{t_\lambda}=\langle [e_{t_\lambda}],\widetilde{\MC}(\mathring{\Sigma}^{t_\lambda})\rangle\\
        =&b_{t_\lambda}=\prod_{\alpha,\langle\lambda,\al\rangle>0}\bigg(\frac{1-e^\al}{1+ye^\alpha}\bigg)^{\langle\lambda,\al\rangle}\\
        =&i_{\lambda *}\bigg(\frac{\SMC_y(\mathring{\Pi}_{f})}{\lambda_y(\mathcal{N}^*)}\bigg)|_{t_{\lambda}W}.
    \end{align*}
    where the last equality follows from \cref{equ:lhs}. This concludes the proof for the base case $w=\id$.

    Now pick a $w\in W^P$ such that $w\neq \id$, and suppose 
    \begin{equation}\label{equ:assump}
        i_{\lambda *}\bigg(\frac{\SMC_y(\mathring{\Pi}_{f^\lambda_{u,v}})}{\lambda_y(\mathcal{N}^*)}\bigg)=q^*\bigg(\SMC_y(\mathring{\Sigma}^{f^\lambda_{u,v}})\bigg)
    \end{equation}
    for all $u\in W$, $v\in W^P$, such that $\ell(v)<\ell(w)$. There exists a simple root $\alpha_i$ so that $s_iw<w$. In particular, $v:=s_iw\in W^P$ by \cref{lem:parabolic}. Letting $f=f^\lambda_{u,v}$ in \cref{thm:MCPif} and \cref{thm:recSMCGr}. Since $s_iw<w$, we get $fs_i=f^\lambda_{u,w}<f$ by \cref{equ:leng}. If $s_if<f$, by combining the recursions in \cref{thm:MCPif}.(1), \cref{thm:recSMCGr}.(4), \cref{rem:pushpull}, and the induction assumption \cref{equ:assump}, we get the Theorem for $fs_i=f^\lambda_{u,w}$. If $s_if>f$, we use the recursions in \cref{thm:MCPif}.(3) and \cref{thm:recSMCGr}.(2) to get the same conclusion. This finishes the proof of the theorem.
\end{proof}

\section{Localization of SMC classes and Twisted R-polynomials}\label{sec:twistedR}

In this section, we study the relation between the localization of the SMC classes $\SMC_y(Y(u)^\circ)$ and the twisted R-polynomials from \cite{GLTW},
which is a generalization of the Kazhdan--Lusztig R-polynomials in \cref{sec:KLRpoly}. 
%We first derive certain subword combinatorics. 

\def\ww{\mathbf{w}}
\def\uu{\mathbf{u}}

\subsection{Subwords}
 % Many of definitions are from \cite{GLTW}. 
Let $u,w\in W$. A $w$-word is a sequence  $\ww=(s_{i_1},\ldots,s_{i_{\ell}})$ of simple reflections such that $w=s_{i_1}\cdots s_{i_\ell}$. 
The word $\ww$ is said to be \emph{reduced} if $\ell = \ell(w)$. 
We define 
\begin{equation*}
\beta_{\ww,k} = s_{i_1}\cdots s_{i_{k-1}}\alpha_{i_k},\qquad k=1,2\ldots,\ell. 
\end{equation*}
If $\ww$ is reduced, 
$$\{\beta_{\ww,1},\ldots,\beta_{\ww,k}\}
=\{\alpha>0:w^{-1}\alpha<0\}.$$
We denote $\be_k=\be_{\ww,k}$ for simplicity. A $u$-subword of $\ww$ is a sequence 
$\uu=(\uu_1,\ldots,\uu_\ell)$ such that $\uu_k\in \{s_{i_k},e\}$ for $k=1,\ldots,\ell$ and 
$u=\uu_1\cdots \uu_\ell$. 
We set the partial product 
% $\mathbf{w}_{(k)}=s_{i_1}\cdots s_{i_k}$ and 
$\mathbf{u}_{(k)}=\uu_{1}\cdots \uu_{k}$, and  introduce subsets $J_{\uu}^{\pm},E_{\uu}^{\pm}\subset [\ell]:=\{1,2,\cdots,\ell\}$ by the following table:
$$
\begin{array}{c|c|c}\hline
\vphantom{\dfrac12}
\text{Cases }& \uu_k=s_{i_k} & \uu_k=e\\\hline
\vphantom{\dfrac12}
\uu_{(k-1)} s_{i_k} < \uu_{(k-1)}& 
k\in J_{\uu}^- & 
k\in E_{\uu}^-\\\hline
\vphantom{\dfrac12}
\uu_{(k-1)} s_{i_k} > \uu_{(k-1)}& 
k \in J_{\uu}^+ & 
k\in E_{\uu}^+\\\hline
\end{array}\qquad 
\begin{aligned}
J_{\uu} & = J_{\uu}^+\sqcup J_{\uu}^-,\\
E_{\uu} & = E_{\uu}^+\sqcup E_{\uu}^-,\\
[\ell]& =J_{\uu}\sqcup E_{\uu}. 
\end{aligned}$$
% For readers familiar with \cite{}, 
Then $\uu$ is called \emph{reduced} if $\uu_{(k-1)}\leq \uu_{(k)}$ for all $k$, i.e. 
$J_{\uu}^-=\varnothing$. Similarly, 
$\uu$ is called \emph{distinguished} if $\uu_{(k)}\leq \uu_{(k-1)}s_{i_k}$ for all $k$, i.e. 
$E_{\uu}^-=\varnothing$. 

\begin{example}

Consider the case of $A_4$ with 
$$\ww = (s_4,s_3,s_1,s_4,s_2,s_1,s_3,s_2),\qquad 
u = s_3s_4s_3s_2.$$
There are five $u$-subwords of $\ww$, listed in the following table (here  $\c{i}$ stands for $e\in W$):
$$\begin{array}{ccccccc}\uu&J_\uu^+&J_\uu^-&E_\uu^+&E_\uu^-&\text{reduced}&\text{distinguished}\\\hline
    (\m4,\m3,\c1,\m4,\m2,\c1,\c3,\c2)&\{1,2,4,5\}&\varnothing&\{3,6,7\}&\{8\}&\text{yes} & \text{no}\\\hline
    (\m4,\m3,\c1,\m4,\c2,\c1,\c3,\m2)&\{1,2,4,8\}&\varnothing&\{3,5,6\}&\{7\}&\text{yes}&\text {no}\\\hline
    (\c4,\m3,\c1,\m4,\c2,\c1,\m3,\m2)&\{2,4,7,8\}&\varnothing&\{1,3,5,6\}&\varnothing&\text{yes}& \text{yes}\\\hline
    (\m4,\m3,\m1,\m4,\c2,\m1,\c3,\m2)&\{1,2,3,4,8\}&\{6\}&\{5\}&\{7\}&\text{no}& \text{no}\\\hline
    (\c4,\m3,\m1,\m4,\c2,\m1,\m3,\m2)&\{2,3,4,7,8\}&\{6\}&\{1,5\}&\varnothing&\text{no} & \text{yes}\\\hline
\end{array}$$
The $R$-polynomial is 
$$R_{u,w} 
= (q-1)^4+q(1-q)^2
= 1 - 3q + 4q^2 - 3q^3 + q^4.$$

\iffalse
$$\begin{array}{cccccc}\hline
\vphantom{\dfrac12}\uu&J_\uu^+&J_\uu^-&E_\uu^+&E_\uu^-&\text{properties}\\\hline
\vphantom{\dfrac12}
    (\m4,\m3,\c1,\m4,\m2,\c1,\c3,\c2)&\{1,2,4,5\}&\varnothing&\{3,6,7\}&\{8\}&\text{reduced, not distinguished}\\\hline
\vphantom{\dfrac12}
    (\m4,\m3,\c1,\m4,\c2,\c1,\c3,\m2)&\{1,2,4,8\}&\varnothing&\{3,5,6\}&\{7\}&\text{reduced, not distinguished}\\\hline
\vphantom{\dfrac12}
    (\c4,\m3,\c1,\m4,\c2,\c1,\m3,\m2)&\{2,4,7,8\}&\varnothing&\{1,3,5,6\}&\varnothing&\text{reduced and distinguished}\\\hline
\vphantom{\dfrac12}
    (\m4,\m3,\m1,\m4,\c2,\m1,\c3,\m2)&\{1,2,3,4,8\}&\{6\}&\{5\}&\{7\}&\text{not reduced, not distinguished}\\\hline
\vphantom{\dfrac12}
    (\c4,\m3,\m1,\m4,\c2,\m1,\m3,\m2)&\{2,3,4,7,8\}&\{6\}&\{1,5\}&\varnothing&\text{not reduced,  distinguished }\\\hline
\end{array}$$
\fi

\end{example}

%\begin{example}Consider the case of $A_2$ with $w=s_2s_1$, $\ww=(s_1,s_1,s_2,s_1)$, then 
%\[J_{(e,s_1,s_2,s_1)}^+=\{2,3,4\}, \quad E_{(e,s_1,s_2,s_1)}^+=\{1\},\]so $(e,s_1,s_2,s_1)$ is reduced and distinguished. 
%We have
%\[J_{(s_1,s_1,s_2,e)}^+=\{1,3\},  \quad E_{(s_1,s_1,s_2,e)}^+=\{4\},\]
%so $(s_1,s_1,s_2,e)$ is not reduced but is distinguished. 
%Similarly, 
%\[
%J_{(s_1,e,s_2,s_1)}^+=\{1,3,4\}, \quad E_{(s_1,e,s_2,s_1)}^-=\{2\},
%\]
%so it is reduced but not distinguished. 

%Lastly, 
%\[
%J^+_{s_1,e, e, s_1}=\{1\}, \quad E^+_{s_1,e, e, s_1}=\{3\},
%\]
%so it is not reduced nor distinguished. 
%\end{example}
\iffalse
\begin{example}
{\color{red} do we need this example?}
We  consider the case of $A_2$ with $w=s_1s_2s_1, \ww=\{s_1,s_2,s_1\}$.  
% \[
% J^+_{(s_1,e,e)}=\{1\},\quad E^-_{(s_1, e,e,)}=\{3\}, \quad E^+_{(s_1,e,e)}=\{2\}.
% \]
% \[
% J^+_{(e,e,s_1)}=\{3\}, \quad  E^+_{(e,e,s_1)}=\{1,2\}.
% \]
% so it is reduced but not distinguished. 
There are two $e$-subwords:
\[\begin{array}{ccccccc}
\uu& J_\uu^+& J^-_\uu & E^+_\uu&E^-_\uu & \text{reduced}&\text{distinguished}\\
\hline
(\c1,\c2,\c1)& \varnothing & \varnothing & \{1,2,3\} & \varnothing & \text{yes}& \text{yes}\\
(\m1,\c2,\m1)& \{1\} & \{3\}& \{2\} & \varnothing & \text{no} & \text{yes}
\end{array}
\]
And it is easy to verify  that \[R_{e,s_1s_2s_1}=(q-1)^3+q(q-1).\] 
We are going to see that $R$-polynomials are related to distinguished subwords. 
\end{example}
\fi
 
\subsection{Subword recursion} 
Let $\ww$ be a $w$-word and $\ww \sqcup s$ be  a $ws$-word with $s$ a simple reflection.  For any $u$, there are two types of $u$-subwords in $\ww s$: 
\begin{itemize}
\item Type I: $\xx\sqcup \c{}$ with $\xx$ a $u$-subword of $\ww$;
 \item Type II: $\xx'\sqcup s$ with $\xx'$ a $us$-subword of $\ww$. 
 \end{itemize}
 Moreover, Type I subwords are in bijection to $u$-subwords $\xx$ of $\ww$, and Type II subwords are in bijection to $us$-subwords $\xx'$ of $\ww$. 
 By definition, if we compare the $J^\pm$-subsets and $E^\pm$-subsets  of $\xx\sqcup \c{}$ and  $\xx$ (or  $\xx'\sqcup s$  and $\xx'$, respectively), they will coincide,  except for the following cases
 \begin{equation}\label{eq:typemat}
 \begin{array}{c|c|c}
 &\text{Type I} & \text{Type II}\\
 \hline
 us<u&E^-_{\xx\sqcup \c{}}=E^-_\xx\sqcup \{s\} & J^+_{\xx'\sqcup s}=J^+_{\xx'}\sqcup \{s\}\\
 us>u&E^+_{\xx\sqcup \c{}}=E^+_{\xx}\sqcup \{s\}&J^-_{\xx'\sqcup s}=J^-_{\xx'}\sqcup \{s\}
\end{array}.
 \end{equation}
 Here we abuse the notation and refer the last term in the word $\ww \sqcup s$ as $s$. 

Assume there is a collection of elements $\calS_{u,w}, v,w\in W$  with $\calS_{u,\id}=\delta_{u,\id}$ and subject to the recursive relation ($s=s_\al$):
 \begin{equation}\label{eq:Srec}
\calS_{u,ws}=\begin{cases}\rmp_{11}(w\al)\calS_{u,w}+\rmp_{12}(w\al)\calS_{us,w}, & us<u;\\
 \rmp_{21}(w\al)\calS_{u,w}+\rmp_{22}(w\al)\calS_{us,w}, & us>u.
 \end{cases}
 \end{equation}
 Here $p_{ij}(\al)$ for any root $\al$ are certain elements (and   could be constants that do not depend on $\al$).  We refer to \eqref{eq:Srec} as `subword recursion' condition.  Examples of $\calS_{u,w}$ include various cohomology and K-theory classes associated to Schubert cells/varieties (\eqref{eq:Sch} and \eqref{eq:SMC}), and also the $R$-polynomials \eqref{eq:Rrec}. 
 
 It turns out that the subword recursion condition can be  transformed into a sub-word formula. For each $w$-word $\ww$, with $\be_k=\be_{\ww,k}$,  we have 
 \begin{equation}\label{eq:Sword}
 \calS_{u,w}=\sum_{u\text{-subwords } \uu}~~\prod_{k=1}^\ell \begin{cases}\rmp_{11}(\be_k), & k\in E^-_{\uu};\\
 \rmp_{12}(\be_k), & k\in J^+_\uu;\\
 \rmp_{21}(\be_k), & k\in E^+_{\uu};\\
 \rmp_{22}(\be_k), & k\in J^-_{\uu}. \end{cases}
 \end{equation}
Indeed, it suffices to show that the RHS satisfies the recursive relation as \eqref{eq:Srec}. This can be proved by using \eqref{eq:typemat}.
 
We recall two famous combinatorial formulas that can be derived from subword recursions.  The classical AJS-Billey (Anderson, Jantzen, Soergel) formula \cite{AJS94, Bi99} states the localization of the fundamental class of the opposite Schubert variety $[Y(u)]\in H_T^*(G/B)$ at the torus fixed point $\dot{w}B\in G/B$:
\begin{equation}\label{eq:billeycomb}
[Y(u)]\big|_{w} = \sum_{\uu}\prod_{k\in J_{\uu}}\beta_{\ww,k},
\end{equation}
with the sum over reduced $u$-subwords of $\ww$ (so $J^-_\uu=\varnothing$). 
Indeed, in this case it is well-known that if $ws_\al>w$, then 
\begin{equation}\label{eq:Sch}
[Y(u)]_{ws_\al}=\begin{cases}[Y(u)]|_w+w(\al)[Y(us_\al)]|_{w}, & us_\al<u;\\
[Y(u)]|_{w},& us_\al>u. 
\end{cases}
\end{equation}
One can then derive \eqref{eq:billeycomb} by using  \eqref{eq:Sword}. 

Corresponding formulas for the Segre--MacPherson classes (cohomological stable basis), K-theoretic Schubert classes, SMC classes, and elliptic Schubert classes can be found in \cite{G02, S17, SZZ,LXZ25}. 
    
    The second one is for $R$-polynomials (see Section \ref{sec:KLRpoly}). Assuming $ws>w$, one can rewrite the recursive formula in Section \ref{sec:KLRpoly} as 
\begin{equation}\label{eq:Rrec}
R_{u,ws}=\begin{cases}R_{us,w}, & us<u;\\
(q-1)R_{u,w}+qR_{us,w}, & us>u.
\end{cases}
\end{equation} Therefore, 
 if  $\ww$ be a reduced word of $w$, then  it follows from \eqref{eq:Sword} that
\begin{equation}\label{eq:Rcomb}
R_{u,w}(q)=\sum_{\uu} q^{|J_{\uu}^-|}(q-1)^{|E_{\uu}|},
\end{equation}
with the sum over distinguished $u$-subwords of $\ww$ (so $E^-_\uu=\varnothing$).

We derive a subword formula for the localization of $\SMC_y(Y(u)^\circ)$, which is slightly more explicit than \cite{SZZ}. 
In type $A$, a diagrammatic way of presenting it can be found in \cite[Proposition 2.1]{KJII}.
\begin{prop}\label{prop:billey}
For any $w$-word $\ww$ (not necessarily reduced), we have
$$\SMC_y(Y(u)^\circ)|_w = \sum_{\text{$u$-subwords $\uu$}}(-y)^{|J_{\uu}^-|}
\prod_{k=1}^{\ell}
\begin{cases}
\dfrac{e^{\beta_k}-1}{e^{\beta_k}+y} ,& k\in J_{\uu},\\[2ex]
\dfrac{(1+y)e^{\beta_k}}{e^{\beta_k}+y} ,& k\in E_{\uu}^-,\\[2ex]
\dfrac{1+y}{e^{\beta_k}+y} ,& k\in E_{\uu}^+,
\end{cases}.
$$
\end{prop}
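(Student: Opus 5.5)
The plan is to apply the general subword principle \eqref{eq:Sword}. Set $\calS_{u,w}:=\SMC_y(Y(u)^\circ)|_w$. We need three things: the initial condition $\calS_{u,\id}=\delta_{u,\id}$, a recursion of the form \eqref{eq:Srec} valid for every simple $s=s_\al$ and every $w$ (not only when $ws>w$, since $\ww$ need not be reduced), and an identification of the coefficients $\rmp_{ij}$.

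For the initial condition, note that $\SMC_y(Y(u)^\circ)$ is supported on $Y(u)$. Hence its localization at $e$ vanishes unless $u=\id$. At $u=\id$ the cell $Y(\id)^\circ$ is open, so the localization equals $\MC_y/\lambda_y$ at a smooth point, which is $1$ (as in the proof of \cref{thm:main}).

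The recursion comes from \eqref{eq:TRdual} localized at $w$. Using the formula for $\pi_i^*\pi_{i*}$ together with $\mathcal{L}_{\al}|_w=e^{w\al}$ (modulo the sign convention for $\mathcal{L}$, which I will fix by checking the case $G=SL_2$), we get
\[
\calT^{R,\vee}_\al(\gamma)|_w=\frac{(1+ye^{w\al})\gamma|_w-e^{w\al}(1+ye^{-w\al})\gamma|_{ws}}{1-e^{w\al}}-\gamma|_w .
\]
Apply this to $\gamma=\SMC_y(Y(u)^\circ)$ and substitute the right-hand side of \eqref{eq:TRdual}. This gives a linear relation among $\calS_{u,w}$, $\calS_{u,ws}$ and $\calS_{us,w}$. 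Write $\beta=w\al$ and solve for $\calS_{u,ws}$. The case split is then as follows.
\begin{itemize}
\item If $us<u$, we expect $\calS_{u,ws}=\frac{(1+y)e^{\beta}}{e^\beta+y}\calS_{u,w}+\frac{e^\beta-1}{e^\beta+y}\calS_{us,w}$, so $\rmp_{11}=\frac{(1+y)e^\beta}{e^\beta+y}$ and $\rmp_{12}=\frac{e^\beta-1}{e^\beta+y}$.
\item If $us>u$, we expect $\rmp_{21}=\frac{1+y}{e^\beta+y}$ and $\rmp_{22}=-y\frac{e^\beta-1}{e^\beta+y}$.
\end{itemize}
These match the table in the statement: $E^-$ corresponds to $\rmp_{11}$, $J^+$ to $\rmp_{12}$, $E^+$ to $\rmp_{21}$, and $J^-$ to $\rmp_{22}$, the last carrying the extra factor $-y$.

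Since \eqref{eq:TRdual} holds for all $w$, the recursion holds regardless of whether $ws>w$. Therefore \eqref{eq:Sword} applies to arbitrary words, and the product over $k$ of the cases yields the stated formula with $\beta_k=\beta_{\ww,k}$. This is because in the recursion step for the last letter, $w\al$ is exactly $\beta_{\ww\sqcup s,\ell+1}$.

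The main obstacle is the algebra of solving the localized identity correctly. The denominators $1-e^{\beta}$ must cancel to produce the denominators $e^\beta+y$, and the conventions for $\mathcal{L}_\al$, $e^{\pm\beta}$ and the ordering of the product must be made consistent. I would first verify everything in the rank-one case, with $u\in\{e,s\}$ and $w\in\{e,s\}$, and use that check to pin down the signs before writing the general computation.
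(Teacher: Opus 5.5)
Your proposal is correct and follows the paper's proof essentially step for step. You localize \eqref{eq:TRdual} at $w$ using the definition of $\calT_i^{R,\vee}$; your localization formula, with $\mathcal{L}_{\al}|_w=e^{w\al}$, simplifies to exactly the identity the paper uses. You then solve for the value at $ws$ to get the recursion \eqref{eq:SMC} with precisely your coefficients $\rmp_{ij}$, and conclude by \eqref{eq:Sword}, which is valid for non-reduced words because the recursion does not depend on whether $ws>w$. The only difference is cosmetic and in the base case: the paper obtains $\SMC_y(Y(\id)^\circ)|_{\id}=1$ from $\sum_v\SMC_y(Y(v)^\circ)|_{\id}=1$, whereas you restrict to the open cell, and both arguments work.
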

\begin{proof}
First of all, if $w=\id$, $\SMC_y(Y(u)^\circ)|_w=0$ if $u\neq \id$. If $u=\id$, then $\SMC_y(Y(\id)^\circ)|_{\id}=\sum_v\SMC_y(Y(v)^\circ)|_{\id}=1$. Moreover, the right hand side also equals $\delta_{u,\id}$ if $w=\id$. Thus, both sides are equal if $w=\id$.
By the definition of $T^{R,\vee}_i$ in \eqref{equ:TiTivee}, we have 
$$\gamma|_{ws_i}
=
-\frac{1-e^{w\alpha_i}}{y+e^{w\alpha_i}}(T_i^{R,\vee}\gamma)|_{w}
+\frac{(1+y)e^{w\alpha_i}}{y+e^{w\alpha_i}}\gamma|_w
$$
for any class $\gamma$. 
By \eqref{eq:TRdual}, we get
\begin{equation}
\label{eq:SMC}
\SMC_y(Y(u)^\circ)|_{ws_i}
=\begin{cases}
\dfrac{e^{w\alpha_i}-1}{e^{w\alpha_i}+y} 
    \SMC(Y(us_i)^\circ)|_w
+
\dfrac{(1+y)e^{w\alpha_i}}{e^{w\alpha_i}+y}
    \SMC(Y(u)^\circ)|_w,
    &us_i<u,\\[2ex]
-y \dfrac{e^{w\alpha_i}-1}{e^{w\alpha_i}+y} 
    \SMC(Y(us_i)^\circ)|_w
+\dfrac{1+y}{e^{w\alpha_i}+y}
    \SMC(Y(u)^\circ)|_w,
    &us_i>u.
\end{cases}
\end{equation}
Now the formula follows immediately from \eqref{eq:Sword}. 
\end{proof}

It turns our that this formula specializes to the AJS-Billey formula. 
By \cite[Theorem 5.1]{AMSS24b}, the $y=0$ specialization of $\SMC_y(Y(u)^\circ)$ is 
\[\SMC_0(Y(u)^\circ)=\MC_0(Y(u)^\circ)=[\mathcal{O}_{Y(u)}(-\partial Y(u))]=\sum_{w\geq u}(-1)^{\ell(w)-\ell(u)}[\mathcal{O}_{Y(w)}],\] 
the ideal sheaf of the boundary of the Schubert variety $Y(u)$. Via the equivariant Chern character map $\ch$, we see that $[Y(u)]$ is the lowest degree term of $\ch(\SMC_0(Y(u)^\circ))$. Hence, if we expand $e^{\beta}=1+\beta+\frac{\beta^2}{2!}+\cdots$ for any root $\beta$, the lowest degree term in $\SMC_0(Y(u)^\circ)|_w$ is precisely $[Y(u)]|_w$. Since $y=0$, only the $u$-subwords with $J_{\uu}^-=\varnothing$, i.e. reduced subwords, will contribute to $\SMC_0(Y(u)^\circ)|_w$. 
Moreover, 
\begin{align*}
\lim_{y\to 0} 
\dfrac{e^{\beta_k}-1}{e^{\beta_k}+y}
& = 1-e^{-\beta_k} = \beta_k+\cdots,\\
\lim_{y\to 0} 
\dfrac{(1+y)e^{\beta_k}}{e^{\beta_k}+y}
& = 1,\qquad 
\lim_{y\to 0} 
\dfrac{1+y}{e^{\beta_k}+y}
 = e^{-\beta_k} = 1+\cdots 
\end{align*}
It is immediate to see that \eqref{eq:SMC} converges to \eqref{eq:Sch}, so the formula in Proposition \ref{prop:billey} converges to the AJS-Billey formula.

Now let us explain how this formula relates to the formula for $R$-polynomial \eqref{eq:Rcomb}. This limit process has already appeared in \cite[Theorem 1.2]{BN19} from a different perspective. 

\begin{prop}
\label{prop:limittoRpoly}
For $u,w\in W$, we have 
$$\lim_{e^{\alpha_i}\to 0}
\prod_{\alpha>0}\frac{1+y{e^{w\alpha}}}{1-e^{w\alpha}}
\SMC(Y(u)^\circ)|_w = R_{u,w}(-y), $$
where $\lim\limits_{e^{\alpha_i}\to 0}$ means we let $e^{\alpha_i}$ approach $0$ for any simple root.
When $\ww$ is reduced, each summand in Proposition \ref{prop:billey} converges to the corresponding summand in \eqref{eq:Rcomb}.  
\end{prop}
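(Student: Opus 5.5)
We plan to prove the statement directly from the subword formula of Proposition~\ref{prop:billey}, applied to a reduced word $\ww=(s_{i_1},\ldots,s_{i_\ell})$ of $w$. We then take the limit one summand at a time. Since the sum is finite and every factor is a rational function in the $e^{\alpha_j}$ that has a limit, the termwise limits add up to the limit of the whole expression. This also gives the second assertion of the proposition.

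\emph{Step 1: rewrite the prefactor.} For $\alpha>0$, the root $w\alpha$ is either positive or negative. If it is negative, then $w\alpha=-\beta$ with $\beta>0$ and $w^{-1}\beta<0$. Because $\ww$ is reduced, these $\beta$ are exactly $\beta_1,\ldots,\beta_\ell$. Hence
\[
\prod_{\alpha>0}\frac{1+ye^{w\alpha}}{1-e^{w\alpha}}
=\prod_{\gamma>0,\,w^{-1}\gamma>0}\frac{1+ye^{\gamma}}{1-e^{\gamma}}\cdot\prod_{k=1}^{\ell}\frac{e^{\beta_k}+y}{e^{\beta_k}-1},
\]
where we used $\frac{1+ye^{-\beta}}{1-e^{-\beta}}=\frac{e^{\beta}+y}{e^{\beta}-1}$. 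Every positive root $\gamma$ is a nonnegative integer combination of simple roots and is nonzero. So $e^{\gamma}\to 0$ as all $e^{\alpha_i}\to 0$, and the first product tends to $1$.

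\emph{Step 2: distribute the second product into each summand.} Multiply the summand of Proposition~\ref{prop:billey} attached to $\uu$ by the factors $\frac{e^{\beta_k}+y}{e^{\beta_k}-1}$, one for each position $k$. For $k\in J_\uu$ the new factor is exactly $1$. For $k\in E_\uu^-$ it becomes $\frac{(1+y)e^{\beta_k}}{e^{\beta_k}-1}$, which tends to $0$. For $k\in E_\uu^+$ it becomes $\frac{1+y}{e^{\beta_k}-1}$, which tends to $-(1+y)$. In every case we use $e^{\beta_k}\to0$, which holds because $\beta_k>0$.

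\emph{Step 3: identify the limit.} A summand survives in the limit exactly when $E_\uu^-=\varnothing$, that is, when $\uu$ is distinguished. Its limit is then $(-y)^{|J_\uu^-|}(-1-y)^{|E_\uu|}$. Setting $q=-y$, this equals $q^{|J^-_\uu|}(q-1)^{|E_\uu|}$, which is the summand of \eqref{eq:Rcomb} for the same subword. Summing over all $u$-subwords then gives $R_{u,w}(-y)$ by \eqref{eq:Rcomb}.

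The only point that needs care is Step 1, namely identifying the roots $\{w\alpha<0\}$ with $\{-\beta_k\}$. This is the standard fact about inversion sets of reduced words, already recalled in the paper. Apart from that, the argument is a direct factor-by-factor check.
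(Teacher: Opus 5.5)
Your proof is correct and follows the paper's argument: both apply the subword formula of Proposition~\ref{prop:billey} to a reduced word of $w$ and take limits factor by factor to recover the summands of \eqref{eq:Rcomb}. The only difference is cosmetic. The paper takes the limit of the whole prefactor, obtaining $(-y)^{\ell(w)}$, and separately the limits of the subword factors, obtaining $(-y)^{-1}$, $0$ and $(-y)^{-1}(-y-1)$; you instead pair each inversion factor $\frac{e^{\beta_k}+y}{e^{\beta_k}-1}$ with the $k$-th subword factor before passing to the limit.
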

\begin{proof}
Under the assumption that $\ww$ is a reduced word, $\beta_1,\ldots,\beta_k$ are all positive roots. 
% $$\{\beta_1,\ldots,\beta_\ell\} = \{\alpha>0: w^{-1}\alpha<0\}. $$
The formula follows from the following computation
% \begin{equation}\label{eq:limtoR}
\begin{align*}
\lim_{e^{\alpha_i}\to 0}
\prod_{\alpha>0}\frac{1+y{e^{w\alpha}}}{1-e^{w\alpha}}& =
\prod_{\alpha>0}
\begin{cases}
1,    & \text{ if }w\alpha>0\\
(-y),  & \text{ if }w\alpha<0
\end{cases} =(-y)^{\ell(w)}. \\
\lim_{e^{\alpha_i}\to 0}
\frac{e^{\beta_k}-1}{e^{\beta_k}+y}
& = (-y)^{-1},~
\lim_{e^{\alpha_i}\to 0}
\dfrac{(1+y)e^{\beta_k}}{e^{\beta_k}+y}  =0,~
\lim_{e^{\alpha_i}\to 0}
\dfrac{(1+y)}{e^{\beta_k}+y} =(-y)^{-1}((-y)-1). \qedhere
\end{align*}
% \end{equation}
\end{proof}

\subsection{Twisted R-polynomials}
Let $v$ be another Weyl group element. 
Similar to the usual R-polynomials in Section \ref{sec:KLRpoly}, the twisted R-polynomial $R_{u,w}^{(v)}$ is defined by the following formula (see \cite[Proposition 5.1]{GLTW}):
\begin{equation}\label{equ:def}
    T_v T^{-1}_{w^{-1}} = q^{\ell(v)}\sum_{u\in W} R_{u,w}^{(v)}(q^{-1})q^{-\ell(vu)} T_{vu}\in H(W).
\end{equation}
There is also a combinatorial formula for it. We define subsets $J^{(v),\pm}_{\uu},E^{(v),\pm}_{\uu}\subset [\ell]$ by the following table:
$$
\begin{array}{c|c|c}\hline
\vphantom{\dfrac12}
\text{Cases }& \uu_k=s_{i_k} & \uu_k=e\\\hline
\vphantom{\dfrac12}
v\uu_{(k-1)} s_{i_k} < v\uu_{(k-1)}& 
k\in J^{(v),-}_{\uu} & 
k\in E^{(v),-}_{\uu}\\\hline
\vphantom{\dfrac12}
v\uu_{(k-1)} s_{i_k} > v\uu_{(k-1)}& 
k \in J^{(v),+}_{\uu} & 
k\in E^{(v),+}_{\uu}\\\hline
\end{array}\qquad
\begin{aligned}
J_{\uu}^{(v)} & = J_{\uu}^{(v),+}\sqcup J_{\uu}^{(v),-},\\
E_{\uu}^{(v)} & = E_{\uu}^{(v),+}\sqcup E_{\uu}^{(v),-},\\
[\ell]& =J^{(v)}_\uu\sqcup E^{(v)}_\uu. 
\end{aligned}$$
The twisted $R$-polynomial admits the following formula \cite[Section 4]{GLTW}: 
$$R^{(v)}_{u,w}(q)=\sum_{\uu} q^{|J_{\uu}^{(v),-}|}(q-1)^{|E_{\uu}^{(v)}|}$$
with the sum over $v$-distinguished $u$-subwords of $\ww$, i.e. $E_{\uu}^{(v),-}=\varnothing$. 

\begin{thm}For $u,v,w\in W$, we have 
$$\lim_{e^{v\alpha_i}\to 0}
\prod_{\alpha>0}\frac{1+y{e^{w\alpha}}}{1-e^{w\alpha}}
\SMC_y(Y(u)^\circ)|_w
= R_{v^{-1}u,v^{-1}w}^{(v)}(-y). 
$$
\end{thm}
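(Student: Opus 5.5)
We argue by induction along a word for $v^{-1}w$, using the localization recursion \eqref{eq:SMC} on the SMC side and the subword formula for $R^{(v)}$ on the other. Throughout, write
\[
N_{u,w}:=\prod_{\alpha>0}\frac{1+ye^{w\alpha}}{1-e^{w\alpha}}\,\SMC_y(Y(u)^\circ)|_w .
\]
The limit $e^{v\alpha_i}\to 0$ for all simple $\alpha_i$ means: $e^{\beta}\to 0$ if $v^{-1}\beta>0$, and $e^{\beta}\to\infty$ if $v^{-1}\beta<0$. The goal is to show that $L_{u,w}:=\lim N_{u,w}$ exists and equals $R^{(v)}_{v^{-1}u,v^{-1}w}(-y)$.

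\emph{Base case $w=v$.} Here the limit should be $\delta_{u,v}=R^{(v)}_{v^{-1}u,\id}$. I will apply Proposition \ref{prop:billey} with a reduced word $\ww$ of $v$.
\begin{itemize}
\item Each $\beta_k$ is positive with $v^{-1}\beta_k<0$, so $e^{\beta_k}\to\infty$. The three factors then tend to $1$ (for $k\in J_\uu$), $1+y$ (for $k\in E^-_\uu$) and $0$ (for $k\in E^+_\uu$).
\item For $\alpha>0$ the root $v\alpha$ satisfies $v^{-1}(v\alpha)=\alpha>0$, so $e^{v\alpha}\to 0$ and the prefactor tends to $1$.
\item Only $u$-subwords with $E^+_\uu=\varnothing$ survive. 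Suppose such a $\uu$ has a first position $k$ with $\uu_k=e$. Then $\uu_{(k-1)}=s_{i_1}\cdots s_{i_{k-1}}$, and because $\ww$ is reduced, $\uu_{(k-1)}s_{i_k}>\uu_{(k-1)}$. Hence $k\in E^+_\uu$, a contradiction.
\item So the only surviving subword is the full word, with $u=v$, all positions in $J^+$, and weight $1$.
\end{itemize}
This gives $L_{u,v}=\delta_{u,v}$.

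\emph{Inductive step.} Fix a (possibly non-reduced) word $s_{j_1}\cdots s_{j_m}$ for $v^{-1}w$, and pass from $w$ to $ws$ with $s=s_i$. Multiply the recursion \eqref{eq:SMC} by the prefactor for $ws$. The ratio of the prefactors for $ws$ and $w$ only changes the factor coming from the pair $\pm\alpha_i$. After rewriting, this gives a recursion of the form
\[
N_{u,ws}=c_1(w\alpha_i)\,N_{u,w}+c_2(w\alpha_i)\,N_{us,w},
\]
with explicit rational functions $c_1,c_2$ of $e^{w\alpha_i}$, whose form depends on whether $us<u$ or $us>u$. I then take the limit in the two regimes:
\begin{itemize}
\item $v^{-1}w\alpha_i>0$, where $e^{w\alpha_i}\to 0$;
\item $v^{-1}w\alpha_i<0$, where $e^{w\alpha_i}\to\infty$.
\end{itemize}
In each regime I expect the limiting coefficients to be among $0,\ 1,\ -y,\ -y-1$.

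\emph{Comparison with $R^{(v)}$.} On the other side, I will use the argument behind \eqref{eq:Sword}, applied to the $v$-twisted sets $J^{(v),\pm}_{\uu},E^{(v),\pm}_{\uu}$ together with the table \eqref{eq:typemat}. Set $x=v^{-1}u$ and $z=v^{-1}w$. Then the formula of \cite{GLTW} yields a recursion for $R^{(v)}_{x,zs}$ in terms of $R^{(v)}_{x,z}$ and $R^{(v)}_{xs,z}$, whose cases are governed by whether $vxs<vx$, i.e.\ whether $us<u$. The remaining task is to check that the limiting coefficients $c_1,c_2$ coincide with the coefficients of this twisted recursion, evaluated at $q=-y$, in all four combinations of the two sign conditions.

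\emph{Main obstacle.} The key point is that the twisted recursion's coefficients must depend on the sign of $v^{-1}w\alpha_i$ in exactly the way the limit does. This is the distinguished versus non-distinguished condition $E^{(v),-}_\uu=\varnothing$, which corresponds to the coefficient that tends to $0$. I expect this four-case bookkeeping to be the main work. I would first do it for reduced words of $z$ by imitating the computation in Proposition \ref{prop:limittoRpoly}, where each summand is matched term by term, and then confirm that the non-distinguished subwords contribute $0$ in the limit.
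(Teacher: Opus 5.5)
Your proposal is correct and is essentially the paper's argument run as a recursion. Your base case $w=v$ is exactly the paper's computation that only subwords containing the full prefix $\mathbf{v}$ survive (the limit of $A_{\mathbf{x}}$), and inducting along a word of $v^{-1}w$ with \eqref{eq:SMC} is the step-by-step form of the paper's termwise limit of the $B_{\mathbf{x}}$ factors against the subword formula for $R^{(v)}$. Take that word reduced, so that only the regime $e^{w\alpha_i}\to 0$ occurs: there $L_{u,ws}=L_{us,w}$ for $us<u$ and $L_{u,ws}=-(1+y)L_{u,w}-yL_{us,w}$ for $us>u$, which is exactly the twisted recursion at $q=-y$. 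Your expectation for the other regime is off: for $e^{w\alpha_i}\to\infty$ and $us<u$ the limiting coefficients are $-(1+y)/y$ and $-1/y$, not among $0,1,-y,-y-1$. They still agree with the twisted recursion for $zs<z$, but that recursion must be derived from \eqref{equ:def}, not from the reduced-word subword formula.
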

\begin{rem} 
Let us explain the meaning of the limit in the theorem. For each $v\in W$, we can view 
$e^{v\alpha_i}$ for $i\in I$ as independent variables, 
and $e^\alpha$ for any root $\alpha$ is viewed as a function in these variables. 
Then $\lim\limits_{e^{v\alpha_i}\to 0}$ is the usual limit for multivariate functions in calculus. 
%  % $SMC(Y(u))|_w$ 
% Formally, we can view elements of $K_T(pt)_{loc}$  rational functions over the torus $T$. 
% Then for a function $f$ and $v\in W$, we define (if exists)
% $$\lim_{e^{v\alpha_i}\to 0} f = 
% \lim_{t\to \infty} f(\gamma(t))
% \text{ for any curve $\gamma$ with }
% \lim_{t\to \infty} e^{v\alpha_i}(\gamma(t))=0.$$
In fancy language, we can think of $f$ as a rational function over the toric variety associated with the Coxeter fan. The limit $\lim\limits_{e^{v\alpha_i}\to 0}f$ (if defined) is the value of $f$ at the torus fixed point corresponding to the Weyl chamber of $v$. 
\end{rem}
\begin{proof}
We prove the following equivalent form (via changing of variable $(w,u)\mapsto (vw,vu)$)
\begin{equation}\label{eq:limit}\lim_{e^{v\alpha_i}\to 0}
\prod_{\alpha>0}\frac{1+y{e^{vw\alpha}}}{1-e^{vw\alpha}}
\SMC_y(Y(vu)^\circ)|_{vw}
= R_{u,w}^{(v)}(-y). 
\end{equation}
\def\vv{\mathbf{v}}% 
\def\xx{\mathbf{x}}% 
Pick a reduced $v$-word $\vv = (s_{i_1'},\ldots,s_{i_h'})$ and a reduced $w$-word $\ww=(s_{i_1},\ldots,s_{i_\ell})$. 
Now the concatenation $\vv\ww$ is a $vw$-word. Denote 
$\gamma_k = \beta_{\vv,k}$ and 
$\beta_k = \beta_{\ww,k}$. 
Then we have 
\[\beta_{\vv\ww,k}
= \gamma_k\qquad (k=1,\ldots,h),\qquad 
\beta_{\vv\ww,h+k}
= v\beta_{k},\qquad 
(k=1,\ldots,\ell). \]
Proposition \ref{prop:billey} implies that  the LHS of \eqref{eq:limit} admits a formula in terms of $vu$-subwords $\xx$ of $\vv\ww$, i.e. 
$$\sum_{\text{$vu$-subword } \xx}
(-y)^{|J_{\xx}^-|}A_{\xx}B_{\xx}$$
where 
$$A_{\xx} = 
\prod_{k=1}^{h}
\begin{cases}
\dfrac{e^{\gamma_k}-1}{e^{\gamma_k}+y} & k\in J_{\xx},\\[2ex]
\dfrac{(1+y)e^{\gamma_k}}{e^{\gamma_k}+y} & k\in E_{\xx}^-,\\[2ex]
\dfrac{1+y}{e^{\gamma_k}+y} & k\in E_{\xx}^+.
\end{cases}\qquad 
B_{\xx} = 
\prod_{k=1}^{\ell}
\begin{cases}
\dfrac{e^{v\beta_k}-1}{e^{v\beta_k}+y} & h+k\in J_{\xx},\\[2ex]
\dfrac{(1+y)e^{v\beta_k}}{e^{v\beta_k}+y} & h+k\in E_{\xx}^-,\\[2ex]
\dfrac{1+y}{e^{v\beta_k}+y} & h+k\in E_{\xx}^+.
\end{cases}$$
First we claim that 
$$\lim_{e^{v\alpha_i}\to 0}A_{\xx}=\begin{cases}
1& \text{if }(\xx_1,\ldots,\xx_h) = \vv,\\
0& \text{otherwise}.
\end{cases}$$
Since $v^{-1}\gamma_k$ are negative roots, we get
\begin{align*}
\lim_{e^{v\alpha_i}\to 0 }
\frac{e^{\gamma_k}-1}{e^{\gamma_k}+y}
& = 
v\left(\lim_{e^{\alpha_i}\to 0 }
\frac{e^{v^{-1}\gamma_k}-1}{e^{v^{-1}\gamma_k}+y}\right) = 1,\\
\lim_{e^{v\alpha_i}\to 0 }
\dfrac{(1+y)e^{\gamma_k}}{e^{\gamma_k}+y}
& = v\left(\lim_{e^{\alpha_i}\to 0 }\dfrac{(1+y)e^{v^{-1}\gamma_k}}{e^{v^{-1}\gamma_k}+y}
\right) = 1+y,\\
\lim_{e^{v\alpha_i}\to 0 }
\dfrac{1+y}{e^{\gamma_k}+y}
& = v\left(\lim_{e^{\alpha_i}\to 0 }\dfrac{1+y}{e^{v^{-1}\gamma_k}+y}
\right) = 0.
\end{align*}
As a result, the limit of $A_{\xx}$ is nonzero only if $E_{\xx}^+\cap \{1,\ldots,h\}=\varnothing$, which is equivalent to $\xx_k=\vv_k$ for all $k=1,\ldots,h$ by induction on $k$. 
So we can assume $\mathbf{x} = \mathbf{v}\mathbf{u}$ for a $u$-subword of $\ww$. 
Since $\xx_{(h+k)}=v\uu_{(k)}$, we have 
$$
J_{\xx}^+ = \{1,\ldots,h\}\cup (h+J_{\uu}^{(v),+}),\qquad 
J_{\xx}^- = h+J_{\uu}^{(v),-},\qquad 
E^{\pm}_{\xx} = h+E_{\uu}^{(v),\pm}.$$
Using exactly the same limits computed in the proof of Proposition \ref{prop:limittoRpoly}, we have 
$$\lim_{e^{v\alpha_i}\to 0}
(-y)^{|J_{\vv\uu}^-|}
\prod_{\alpha>0}\frac{1+y{e^{vw\alpha}}}{1-e^{vw\alpha}}B_{\vv\uu}
= 
\begin{cases}
(-y)^{|J_{\uu}^{(v),-}|}((-y)-1)^{|E_{\uu}^{(v)}|}, & E_{\uu}^{(v),-}=\varnothing;\\
0, & \text{otherwise}.
\end{cases}$$
This concludes the proof.
\end{proof}

\begin{example}
Consider the case $A_2$, $u=s_2$ and $w=s_1s_2$.
We have 
$$\SMC_y(Y(u)^\circ)|_w \cdot \prod_{\alpha>0}\frac{1+ye^{w\alpha}}{1-e^{w\alpha}}
=
\frac{(1+y)(-ye^{\alpha_2} - 1)}{
(e^{\alpha_1}-1)(e^{\alpha_2}-1)} .$$
Then we can compute the limits as follows
\begin{align*}
v=\operatorname{id} & \qquad 
e^{\alpha_1}\to 0,e^{\alpha_2}\to 0 &
\frac{(1+y)(0-1)}{(0-1)(0-1)} &= -y-1,\\
v=s_1 &\qquad 
e^{\alpha_1}\to \infty,e^{\alpha_1+\alpha_2}\to 0 & 
\frac{(1+y)(0-1)}{(\infty -1)(0-1)} &=0,\\
v=s_2 &\qquad 
e^{\alpha_1+\alpha_2}\to 0,e^{\alpha_2}\to \infty & 
\frac{(1+y)(-y\infty-1)}{(0 -1)(\infty-1)} &=y^2-y,\\
v=s_1s_2 &\qquad 
e^{\alpha_2}\to 0,e^{\alpha_1+\alpha_2}\to \infty & 
\frac{(1+y)(0-1)}{(\infty -1)(0-1)} &=0,\\
v=s_2s_1 &\qquad 
e^{\alpha_1+\alpha_2}\to \infty,e^{\alpha_1}\to 0 & 
\frac{(1+y)(-y\infty-1)}{(0-1)(\infty-1)} &=y^2-y,\\
v=s_1s_2s_1 &\qquad 
e^{\alpha_2}\to \infty,e^{\alpha_1}\to \infty & 
\frac{(1+y)(-y\infty-1)}{(\infty-1)(\infty-1)} &=0.
\end{align*}
\end{example}

\section{Combinatorial formula in type $A$}\label{sec:comb}

\newcommand{\PD}[2][1pc]{%
\setlength{\unitlength}{#1}
\def\BPDframe{%
    \thinlines%
    \color{lightgray}%
    \put(0,0){\line(0,1){1}}%
    \put(1,0){\line(0,1){1}}%
    \put(0,0){\line(1,0){1}}%
    \put(0,1){\line(1,0){1}}%
    \linethickness{0.08\unitlength}
    \color{teal}}
\def\O{
\begin{picture}(1,1)
    \BPDframe
\end{picture}}
\def\X{
\begin{picture}(1,1)
    \BPDframe
    \qbezier(0.5,0)(0.5,0.5)(0.5,1)
    \qbezier(0,0.5)(0.5,0.5)(1,0.5)    
\end{picture}}
\def\B{
\begin{picture}(1,1)
    \BPDframe
    \qbezier(0.5,0)(0.5,0.5)(1,0.5)
    \qbezier(0.5,1)(0.5,0.5)(0,0.5)
\end{picture}}
\def\b{
\begin{picture}(1,1)
    \BPDframe
    \qbezier(0.5,0)(0.5,0.5)(0,0.5)
    \qbezier(0.5,1)(0.5,0.5)(1,0.5)
\end{picture}}
\def\D{
\begin{picture}(0,1)
    \color{green}
    \linethickness{0.1\unitlength}
    \qbezier(0,0)(0,0.1)(0,0.1)
    % \qbezier(0,0.15)(0,0.35)(0,0.35)
    \qbezier(0,0.4)(0,0.6)(0,0.6)
    % \qbezier(0,0.65)(0,0.85)(0,0.85)
    \qbezier(0,0.9)(0,1)(0,1)
\end{picture}
}
\def\M##1{\begin{picture}(1,1)%
    \put(0,0.2){\makebox[\unitlength]{\(##1\)}}
\end{picture}}
\begin{array}{@{\,}c@{\,}}
{\def\arraystretch{0}
\setlength{\arraycolsep}{0pc}
\color{teal}
\begin{array}{@{}l@{}}%
#2\end{array}}
\end{array}}

In this section, we give a $K$-theoretic version of the results in \cite[Section 7]{FGSX}.
Following \cite[Section 7.3]{FGSX}, let us consider all possible $n$-periodic tilings on the square grid $ \{1,\ldots,k\}\times \mathbb{Z}$ using tiles
$$\PD[1.5pc]{\B}\qquad \PD[1.5pc]{\X}$$
Here, $n$-periodicity means the tile at $(i,j)$ is the same as that at $(i+n,j)$. Let $\tilde{S}_n$ be the extended affine Weyl group of type $A_{n-1}$.
We can read an affine permutation $f\in \tilde{S}_n$ from a $n$-periodic pipe dream such that the $i$-th pipe on the bottom is connected to the $f(i)$-th pipe on the top.
For example, when $n=7$, $k=3$, 
the following tiling 
$$\PD{
% \M{}\D
% \M{\text{-}6}\M{\text{-}5}\M{\text{-}4}\M{\text{-}3}\M{\text{-}2}\M{\text{-}1}\M{0}
% \D\M{1}\M{2}\M{3}\M{4}\M{5}\M{6}\M{7}
% \D\M{8}\M{9}\M{1\kern -0.1em0}\M{1\kern -0.1em1}\M{1\kern -0.1em2}\M{1\kern -0.1em3}\M{1\kern -0.1em4}\D\\
\M{}
\D\M{}\M{}\M{}\M{}\M{}\M{}\M{}
\D\M{}\M{}\M{}\M{}\M{}\M{}\M{}
\D\M{}\M{}\M{}\M{}\M{}\M{}\M{}
\D
\\
\M{}\D
    \X\X\B\X\X\B\X
\D\X\X\B\X\X\B\X\D\X\X\B\X\X\B\X\D\\
\M{\cdots\qquad\qquad}\D
    \B\B\X\B\X\B\X
\D\B\B\X\B\X\B\X\D\B\B\X\B\X\B\X\D\M{\qquad\qquad\cdots}\\
\M{}\D
    \X\B\B\X\B\B\X
\D\X\B\B\X\B\B\X\D\X\B\B\X\B\B\X\D
\\
\M{}\D
\M{\text{-}6}\M{\text{-}5}\M{\text{-}4}\M{\text{-}3}\M{\text{-}2}\M{\text{-}1}\M{0}
\D\M{1}\M{2}\M{3}\M{4}\M{5}\M{6}\M{7}
\D\M{8}\M{9}\M{1\kern -0.1em0}\M{1\kern -0.1em1}\M{1\kern -0.1em2}\M{1\kern -0.1em3}\M{1\kern -0.1em4}\D\\
}$$
is a pipe dream  with  reading affine permutation given by
$$f(1)=2,\,\, f(2)=6,\,\, f(3)=5,\,\, f(4)=10,\,\, f(5)=8,\,\, f(6)=11,\,\, f(7)=7.$$
Let us denote by $\mathsf{PD}(f)$ the set of all such tilings with reading affine permutation $f$. 
Let $x_1,\ldots,x_k,t_1,\ldots,t_n$ be variables. 
Let us define the weight of each tile. Consider the tile at $(i,j)$
$$\operatorname{wt}\left(\PD{
\M{}\M{a}\\
\M{a}\B\M{b}\\
\M{}\M{b}}\right)
= \begin{cases}
\dfrac{(1+y)x_i}{t_j+yx_i}, & \textit{ if }
a<b,\\[2ex]
\dfrac{(1+y)t_j}{t_j+yx_i}, & \textit{ if }
a>b.\\
\end{cases}\qquad 
\operatorname{wt}\left(\PD{
\M{}\M{b}\\
\M{a}\X\M{a}\\
\M{}\M{b}}\right)
= \begin{cases}
\dfrac{t_j-x_i}{t_j+yx_i}, & \textit{ if } 
a<b,\\[2ex]
-y\dfrac{t_j-x_i}{t_j+yx_i}, & \textit{ if } 
a>b,\\
\end{cases}$$
where $a$ (resp. $b$) indicates the pipe is from the $a$-th (resp. $b$-th) column of the bottom. 
For each $\pi\in \mathsf{PD}(f)$, we define the weight $\operatorname{wt}(\pi)$ to be the product of weights of tiles in a period, i.e. $\{1,\ldots,k\}\times \{1,\ldots,n\}$. 
We define 
$$\tilde{G}_f = \sum_{\pi\in \mathsf{PD}(f)} \operatorname{wt}(\pi). $$
In view of \cite{SZ2025}, this could be viewed as a generalization of stable affine Grothendieck polynomial. 

The maximal torus $T\subset G=GL_n$ is $(\mathbb{C}^*)^n$. Let $t_i$ be the character of projecting to the $i$-th component. Then $K_T(\pt)=\mathbb{Z}[t_1^{\pm 1},\cdots, t_n^{\pm 1}]$, and 
% For $G=GL_n$, we can identify 
% $$X_*(T) = 
% \mathbb{Z}\epsilon_1\oplus \cdots \oplus \mathbb{Z}\epsilon_n
% \supset 
% \mathbb{Z}(\epsilon_1-\epsilon_2)\oplus 
% \cdots \oplus 
% \mathbb{Z}(\epsilon_{n-1}-\epsilon_n)=Q^\vee. $$
% Let $y_i= e^{\epsilon_i}\in K_T(pt)$ \red{CS: $\epsilon_i$ is a cocharacter, here we want a character. Perhaps we should also use the notation $t_i$ from above.}. 
$$e^{\alpha_1}=t_1/t_2,\ldots,
e^{\alpha_{n-1}}=t_{n-1}/t_n,e^{\alpha_0} = t_{n}/t_1. $$
Let us consider the Grassmannian $\operatorname{Gr}_k(\mathbb{C}^n)$. The open projected Richardson varieties are known as {open positroid varieties}, which are indexed by bounded affine permutations \cite{KLS13}: 
$$\mathcal{B}
=\left\{
\begin{array}{c}
\text{bijections $f\colon\mathbb{Z}\to \mathbb{Z}$}
\end{array}:
\begin{array}{c}
f(i+n)=f(i)+n\\[1ex]
\frac{1}{n}\sum_{i=1}^n(f(i)-i)=k\\[1ex]
i\leq f(i)\leq i+n
\end{array}
\right\}.$$
% Let 
% $$\lambda = \epsilon_1+\cdots+\epsilon_k.$$
% Then $G/P$ is the Grassmannian $\operatorname{Gr}_k(\mathbb{C}^n)$. 
Let $x_1,\ldots,x_k$ be the formal K-theoretic Chern roots of the tautological subbundle $\mathcal{V}$ on $\operatorname{Gr}_k(\mathbb{C}^n)$, then one can view $K_T(\Gr_k(\bbC^n))$ as a quotient of 
\[
 \bbZ[t_1^{\pm 1}, ...,t_n^{\pm 1}][ x_1^{\pm 1},...,x_k^{\pm 1}]^{S_k}. 
\]
The following theorem shows that $\tilde G_f$ is a polynomial representative of the SMC class of the open projected Richardson variety. 

\begin{thm}
For any $f\in \mathcal{B}$, the function $\tilde{G}_f$ is symmetric in $x_1,\ldots,x_k$ and 
$$\SMC_y(\mathring{\Pi}_f) = \tilde{G}_f\in K_T(\operatorname{Gr}_k(\mathbb{C}^n))[[y]].$$
\end{thm}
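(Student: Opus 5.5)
The plan is to use the main comparison (\cref{thm:main}) together with localization. This turns the statement into a check on torus fixed points of $\Gr_k(\bbC^n)$. Classes in $K_T(\Gr_k(\bbC^n))$ localized at $T$-fixed points are injective. The fixed points are the coordinate subspaces $E_I$, $I\subset[n]$ with $|I|=k$. At $E_I$ the Chern roots $\{x_1,\dots,x_k\}$ specialize to $\{t_i\}_{i\in I}$. It therefore suffices to prove two things: that $\tilde G_f$ is symmetric in the $x$'s, and that
\[
\tilde G_f\big|_{x=t_I}=\SMC_y(\mathring{\Pi}_f)|_{E_I}\quad\text{for every $k$-subset } I.
\]
Symmetry I would handle separately: a Yang--Baxter / train argument showing that swapping rows $i$ and $i+1$ preserves the partition function. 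This follows \cite[Section 7]{FGSX}, with the cohomological weights replaced by the K-theoretic ones above. I expect this to be the main obstacle. One must verify the K-theoretic Yang--Baxter equation for these six weights with an $R$-matrix, say of the form $\frac{x_{i+1}-x_i}{x_{i+1}+yx_i}$ and similar entries, and check the boundary behaviour of the periodic strip.

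For the localization step, I would rewrite each side via the affine picture. \cref{thm:main} and localization at $t_{v\lambda}W$, with $\lambda=\epsilon_1+\dots+\epsilon_k$ and $v\lambda$ corresponding to $I$, give
\[
\SMC_y(\mathring{\Pi}_f)|_{E_I}=\lambda_y(\mathcal N^*)|_{E_I}\,\frac{\SMC_y(\mathring\Sigma^f)|_{t_{v\lambda}}}{\lambda_{-1}(\mathcal N^*)|_{E_I}}.
\]
Here the weights of $\mathcal N$ come from \cref{lem:tanweGr}, with $\delta=0$. On the pipe-dream side, I would expand $\SMC_y(\mathring\Sigma^f)|_{t_{v\lambda}}$ by the affine analogue of \cref{prop:billey}. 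That proposition only used \eqref{eq:Sword} and the recursion \cref{cor:recSMCaff}, so it transfers verbatim to $W_\ext$, using a (non-reduced, periodic) word for $t_{v\lambda}$.

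The natural word for $t_{v\lambda}$ in $\tilde S_n$ is read row by row from the $k\times n$ grid: each row is a product of $n$ affine simple reflections, shifting one strand by $n$. With this word, $u$-subwords correspond exactly to tilings in $\mathsf{PD}(f)$: a cross means the letter is used, an elbow means it is omitted. The root $\beta_k$ attached to tile $(i,j)$ becomes $x_i/t_j$ after setting $x=t_I$. The four cases $J^\pm$, $E^\pm$ match the conditions $a<b$ and $a>b$. Under this dictionary the weights of \cref{prop:billey} become exactly the tile weights:
\[
\frac{e^\beta-1}{e^\beta+y}\mapsto\frac{t_j-x_i}{t_j+yx_i},\qquad \frac{(1+y)e^\beta}{e^\beta+y}\mapsto\frac{(1+y)t_j}{t_j+yx_i},\qquad \frac{1+y}{e^\beta+y}\mapsto\frac{(1+y)x_i}{t_j+yx_i},
\]
up to the orientation convention $e^\beta=t_j/x_i$.

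Two points remain to be checked carefully:
\begin{enumerate}
\item Tiles with $x_i=t_j$, i.e.\ $j\in I$, must absorb the normal-bundle factor $\lambda_y(\mathcal N^*)/\lambda_{-1}(\mathcal N^*)$. Those tiles give zero for crosses, which forces elbows, and their weights contribute the missing factors.
\item The choice of word must be made independent of $v$. I would do this by first treating $I=\{1,\dots,k\}$ and then transporting to general $I$ with $s_i^L$, using the recursions \cref{thm:MCPif}.
\end{enumerate}

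An alternative route avoids the bijection bookkeeping. One shows directly that $f\mapsto\tilde G_f$ satisfies the four recursions of \cref{thm:MCPif}, via local moves on column $i$ of the tiling, and agrees on the base case $f=t_\lambda$. That base case is a single tiling, checkable against \cref{equ:lhs}. Uniqueness of solutions to these recursions, as in the proof of \cref{thm:main}, then finishes the argument.
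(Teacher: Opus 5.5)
Your overall route matches the paper's sketch. You use \cref{thm:main} and injectivity of localization to reduce to fixed points, compute $\SMC_y(\mr\Sigma^f)|_{t_{v\lambda}}$ by a subword/vertex-model formula, match it with the tiling sum, and get symmetry from a Yang--Baxter argument. The paper, following FGSX, derives its diagrammatic formula from the left recursion in \cref{cor:recSMCaff}. Your right-recursion analogue of \cref{prop:billey} is equally valid, and it has the advantage of holding for non-reduced words. The gap is the dictionary between tilings and subwords: it fails as you state it, and your checks (1) and (2) misdiagnose what happens.

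First, a row cannot be a word of $n$ simple reflections for $t_{\epsilon_j}$. That element is a length-zero element times an element of length $n-1$, so the parity is wrong. Also, the all-cross row (``all letters used'') has degree $0$. Without a distinguished elbow, the order in which a row's crosses compose depends on where the elbows sit, so $\mathsf{PD}(f)$ is not the set of subwords of any fixed word.

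The missing idea is that the specialization itself supplies the distinguished elbow. Fix a bijection $\sigma\colon[k]\to I$ and set $x_i=t_{\sigma(i)}$. Only the tile $(i,\sigma(i))$ has $x_i=t_j$, not every tile with $j\in I$. At that tile the cross weight is $0$ and both elbow weights equal $1$, so it is a forced elbow of weight $1$; it plays the role of the length-zero rotation $\tau\in\Omega$.

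Read the other $n-1$ tiles of row $i$ from column $\sigma(i)+n-1$ down to $\sigma(i)+1$. This gives a word $\tau\,s_{\sigma(i)+n-2}\cdots s_{\sigma(i)}$ for $t_{\epsilon_{\sigma(i)}}$, up to conventions. Stacking the rows gives a word of length $k(n-1)$ for $t_{v\lambda}$. It is non-reduced by $k(k-1)$, because the tiles $(i,j)$ with $j\in I\setminus\{\sigma(i)\}$ are genuine letters. Earlier rows multiply to translations, which act trivially on roots when $\delta=0$. So the root at tile $(i,j)$ satisfies $e^{\beta}=t_j/t_{\sigma(i)}$, and your weight match goes through.

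Nothing has to absorb a normal-bundle factor. Since $\lambda=\epsilon_1+\dots+\epsilon_k$ is minuscule, \cref{lem:tanweGr} gives $\Gr_\lambda=G/P$ and $\mathcal N=0$. Nor does the word need to be independent of $v$: you evaluate at each $x=t_\sigma$ separately, so the transport via $s_i^L$ in (2) is unnecessary.

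Symmetry must still come from Yang--Baxter, with the shift-twisted periodic trace, because fixed-point values alone cannot give it. Finally, in your alternative route the base case $f=t_\lambda$ is not a single tiling once $k\ge 2$; for $k=2$ there are three. So that check is a genuine identity, not a one-term comparison.
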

\begin{proof}
The proof is completely parallel to the one in \cite[Section 7]{FGSX}, and we only sketch it. 
The key step is an analog of \cite[Theorem 7.3]{FGSX}, a diagrammatic formula for the localization of $\SMC_y(\mathring{\Sigma}^f)|_t$ for all $t,f\in \tilde{S}_n$. 
The remaining argument is based on Theorem \ref{thm:main} and a diagrammatic calculus. 
The major difference is the weight 
$$\def\midarrow{\tikz{\draw[thick,->] (-0.001,0) -- +(0.001,0);}}
\def\midlabel#1{\tikz{%
    \draw[fill=white,white] (0,0) circle [radius=.15];
    \node at (0,0){\small #1};}}
\operatorname{wt}\left(\begin{matrix}
\begin{tikzpicture}[scale=1.5] 
\draw [thick,->] (0,0) node [left]{$u$}
-- 
node[pos=0.2]{\midlabel{$x$}} 
node[pos=0.8]{\midlabel{$b$}} 
(1,1) node [left]{$u$}; 
\draw [thick,->] (1,0) node [left]{$v$}
--
node[pos=0.2]{\midlabel{$y$}} 
node[pos=0.8]{\midlabel{$a$}} 
 (0,1) node [left]{$v$}; 
\end{tikzpicture}
\end{matrix}\right)
=\frac{1}{v+yu}\begin{cases}
(1+y)u& a=x<b=y,\\
(1+y)v& a=x>b=y,\\
v-u& a=y>b=x,\\
(-y)(v-u)& a=y<b=x.\\
\end{cases}$$
Using the same notation as in the proof of Theorem 7.3 in \textit{loc.cit.} (with $y_i$ changed to $t_i$) we have, 
if $f>s_if$
\def\wt{\operatorname{wt}}
\def\t{t}
\begin{align*}
\wt(\mathcal{D}_{s_i \t},\beta_{f})
& =
\frac{(1+y)t_{i}}{t_{i}+yt_{i+1}}
s_i\big(\wt(\mathcal{D}_{\t},\beta_{f})\big)
+
\frac{t_{i}-t_{i+1}}{t_{i}+yt_{i+1}}
s_i\big(\wt(\mathcal{D}_{\t},\beta_{s_if})\big)\\
& = 
\frac{(1+y)e^{\alpha_i}}{e^{\alpha_i}+y}
s_i\big(\wt(\mathcal{D}_{\t},\beta_{f})\big)
+
\frac{e^{\alpha_i}-1}{e^{\alpha_i}+y}
s_i\big(\wt(\mathcal{D}_{\t},\beta_{s_if})\big),
\end{align*}
and if $f>s_if$

\begin{align*}
\wt(\mathcal{D}_{s_i \t},\beta_{f})
& =
\frac{(1+y)t_{i+1}}{t_{i}+yt_{i+1}}
s_i\big(\wt(\mathcal{D}_{\t},\beta_{f})\big)
+(-y)
\frac{t_{i}-t_{i+1}}{t_{i}+yt_{i+1}}
s_i\big(\wt(\mathcal{D}_{\t},\beta_{s_if})\big)\\
& = 
\frac{(1+y)}{e^{\alpha_i}+y}
s_i\big(\wt(\mathcal{D}_{\t},\beta_{f})\big)
+(-y)
\frac{e^{\alpha_i}-1}{e^{\alpha_i}+y}
s_i\big(\wt(\mathcal{D}_{\t},\beta_{s_if})\big).
\end{align*}
This agrees with the recursion for $\SMC_y(\mathring{\Sigma}^f)|_t$ from Corollary \ref{cor:recSMCaff}. 
\end{proof}

\begin{example} Let $k=1$, and consider 
$X:=\operatorname{Gr}_1(\mathbb{C}^n)=\mathbb{P}^{n-1}$. 
% Recall the Euler exact sequence
% $$0\longrightarrow T_X^*\longrightarrow \mathbb{C}^n\otimes \mathcal{O}_X(-1)\longrightarrow \mathcal{O}_X\longrightarrow 0. $$
% We have 
% $$\lambda_y(T_X^*)
% =\frac{\lambda_y(\mathbb{C}^n\otimes \mathcal{O}_X(-1))}{\lambda_{y}(\mathcal{O}_X)}
% =\frac{\prod_{j=1}^n\big(1+y\frac{x}{t_i}\big)}{1+y}.$$
Let $L_A=\mathbb{P}(\mathbb{C}^A)$ be the coordinate plane for a subset $A\subseteq \{1,\ldots,n\}$. 
Let $L_A^\circ\subset L_A$ be such that 
$L_A = \bigsqcup_{B\subset A} L_B^\circ$.

There is a bijection between $ \mathcal{B}$ subsets of $\{1,...,n\}$ given by $f\mapsto  A_f:=\{1\leq i\leq n:f(i)\neq i\}$, and we have 
$\mathring{\Pi}_f = L^\circ_{A_f}$. Denote $A=A_f$ and $L=L_A$, and we compute $\SMC_y(\mathring{\Pi}_f)$ geometrically. 

The Euler exact sequence over $L$ 
$$0\longrightarrow T_L^*\longrightarrow \mathbb{C}^A\otimes \mathcal{O}_L(-1)\longrightarrow \mathcal{O}_L\longrightarrow 0$$
gives 
$$\lambda_y(T_{L}^*)
= \frac{\lambda_y(\mathbb{C}^A\otimes \mathcal{O}_L(-1))}{\lambda_y(\mathcal{O}_L)}
= \frac{1}{1+y}\prod_{j\in A} \left(1+y\frac{x}{t_j}\right). $$

The structure sheaf of $L=L_A$ can be represented by Koszul complex 
$$\bigotimes_{j\notin A} [\mathcal{O}(-1)\stackrel{x_j}\longrightarrow \mathcal{O}_X] = 
\prod_{j\notin A}\left(1-\frac{x}{t_j}\right).$$
Thus,
$$\MC_y(L_A) = 
\frac{1}{1+y}\cdot 
\prod_{j\in A}\left(1+y\frac{x}{t_j}\right)\cdot 
\prod_{j\notin A}\left(1-\frac{x}{t_j}\right)=
\frac{1}{1+y}\cdot 
\prod_{j\in A}\left(1+\frac{(1+y)\frac{x}{t_j}}{1-\frac{x}{t_j}}\right)\cdot 
\prod_{j}\left(1-\frac{x}{t_j}\right),$$
and
%By noticing 
% $\left(1+y\frac{x}{t_j}\right)=\left(1-\frac{x}{t_j}+(1+y)\frac{x}{t_j}\right)$, 
% it is easy to see 
$$\MC_y(\mathring{\Pi}_f)
=\MC_y(L_A^\circ) = 
\frac{1}{1+y}\cdot 
\prod_{j\in A}\left((1+y)\frac{x}{t_j}\right)\cdot 
\prod_{j\notin A}\left(1-\frac{x}{t_j}\right).$$
As a result, 
$$\SMC_y(\mathring{\Pi}_f) = \frac{\operatorname{MC}_y(\mathring{\Pi}_f)}{\lambda_y(T_X^*)}
= 
\prod_{j\in A}\frac{(1+y)\frac{x}{t_j}}{1+y\frac{x}{t_j}}\cdot 
\prod_{j\notin A}\frac{1-\frac{x}{t_j}}{1+y\frac{x}{t_j}}
= 
\prod_{j\in A}\frac{(1+y)x}{t_j+yx}\cdot 
\prod_{j\notin A}\frac{t_j-x}{t_j+yx}.$$

Combinatorially, note that for the tiling of $f\in \calB$,  the $\PD{\B}$-tiles are put exactly at the $(1,j)$-entry for $j\in A$. 
For example, when $A=\{2,4,5,6,9\}\subset \{1,\ldots,9\}$, the unique $f\in \mathcal{B}$ and the unique element in $\mathsf{PD}(f)$ can be represented as 
$$\PD{
\D\M{\text{-}1}\M{2}\M{1}\M{4}\M{5}\M{6}\M{3}\M{7}\M{9}\D\\
\D\B\X\B\X\X\X\B\B\X\D\\
\D\M{1}\M{2}\M{3}\M{4}\M{5}\M{6}\M{7}\M{8}\M{9}\D\\
}$$
We thus have 
$$\tilde{G}_f = \prod_{j\in A}\frac{(1+y)x}{t_j+yx}\cdot 
\prod_{j\notin A}\frac{t_j-x}{t_j+yx}.$$
This agrees with the computation above.

% Let us consider 
% $f\in \tilde{S}_n$ defined by 
% $$f(j)=j+\delta_{ij}n,\qquad j=1,\ldots,n.$$
% Then $\mathring{\Pi}_f$ is the torus fixed point corresponding to the $i$-th coordinate space. Its structure sheaf can be represented by a Koszul complex
% $$\bigotimes_{j\neq i}[\mathcal{O}(-1)\stackrel{x_i}\longrightarrow \mathcal{O}_X] = \prod_{j\neq i}\left(1-\frac{x}{t_j}\right).$$
% As a result, 
% $$\SMC_y(\mathring{\Pi}_f) = \frac{\operatorname{MC}_y(\mathring{\Pi}_f)}{\lambda_y(T_X^*)}
% = 
% \frac{(1+y)}{1+y\frac{x}{t_i}}
% \prod_{j\neq i}\frac{1-\frac{x}{t_j}}{1+y\frac{x}{t_j}}
% =
% \frac{(1+y)t_i}{t_i+yx}
% \prod_{j\neq i}\frac{t_i-x}{t_i+yx}. $$
% Combinatorially, there is exactly one element of $\mathsf{PD}(f)$ which has a unique $\PD{\B}$ at the $i$-th column in an peroid. For example, when $n=4$ and $i=3$
% $$\PD{
% \M{}\M{}\D\M{}\M{}\M{}\M{}
% \D\M{}\M{}\M{}\M{}\D\M{}\M{}\M{}\M{}\D\\
% \M{\cdots}\M{}\D\X\X\B\X\D\X\X\B\X\D\X\X\B\X\D\M{}
% \M{\cdots}\\
% \M{}\M{}\D\M{\text{-}3}\M{\text{-}2}\M{\text{-}1}\M{0}
% \D\M{1}\M{2}\M{3}\M{4}
% \D\M{5}\M{6}\M{7}\M{8}\D}
% $$
% It is easy to see the combinatorial formula agrees with the computation above. 
% More general, for a subset $J$, we can consider the coordinate subspace $W_J\subseteq X$ of $J$. 
% The the structure sheaf of $P$ can be represented by 
% $$\bigotimes_{j\notin J} [\mathcal{O}(-1)\stackrel{x_i}\longrightarrow \mathcal{O}_X] = 
% \prod_{j\notin J}\left(1-\frac{x}{t_j}\right).$$
% As a result, 
% $$\MC_y(P)=\frac{\prod_{j\in J}(1+y\frac{x}{t_j})}{1+y}\cdot \prod_{j\notin J}\left(1-\frac{x}{t_j}\right). $$
\end{example}

\bibliographystyle{alpha}
\bibliography{MCRichardson}

\end{document}